\documentclass{amsart}
\usepackage{amsmath,amsthm,amssymb,amsfonts}
\usepackage{pdfsync}
\usepackage{fourier}
\usepackage[colorlinks=true, linktocpage=true]{hyperref}
\hypersetup{citecolor=blue}
\usepackage{booktabs, comment}

\title[Torsion growth and cycle complexity]{Torsion homology growth and cycle complexity of arithmetic manifolds}

\usepackage{xypic}
\usepackage{pdfsync}
\usepackage{amssymb}
\usepackage{color}
\usepackage{longtable}
\author{Nicolas Bergeron, Mehmet Haluk \c{S}eng\"un and Akshay Venkatesh} 

\address{Institut de Math\'ematiques de Jussieu \\
Unit\'e Mixte de Recherche 7586 du CNRS \\
Universit\'e Pierre et Marie Curie \\
4, place Jussieu 75252 Paris Cedex 05, France \\}
\email{bergeron@math.jussieu.fr}
\urladdr{http://people.math.jussieu.fr/~bergeron}
\thanks{The first author is a member of the Institut Universitaire de France.}

\address{Mathematics Institute \\
University of Warwick \\
Coventry, CV4 7AL, UK \\}
\email{M.H.Sengun@warwick.ac.uk}
\urladdr{http://warwick.ac.uk/haluksengun}
\thanks{The second author is funded by a Marie Curie Intra-European Fellowship.}

\address{Department of Mathematics \\
            Stanford University \\
            Stanford CA 94304 USA}
\email{akshay@math.stanford.edu}
\urladdr{http://math.stanford.edu/~akshay/}
\thanks{The third author is funded by a Packard foundation fellowship
and an NSF fellowship.}

 \DeclareFontEncoding{OT2}{}{} 
  \newcommand{\textcyr}[1]{%
    {\fontencoding{OT2}\fontfamily{wncyr}\fontseries{m}\fontshape{n}%
     \selectfont #1}}
\newcommand{\Sha}{{\mbox{\textcyr{Sh}}}}
 \DeclareFontFamily{OT1}{rsfs}{}
 \newcommand{\tors}{\mathrm{tors}}
\DeclareFontShape{OT1}{rsfs}{n}{it}{<-> rsfs10}{}
\DeclareMathAlphabet{\mathscr}{OT1}{rsfs}{n}{it}

\newcommand{\denom}{\mathrm{denom}}
\newcommand{\Spec}{\mathrm{Spec}}

\newcommand{\trunc}{\mathrm{tr}}

\newcommand{\C}{\mathbb{C}}

\newcommand{\Res}{\mathrm{Res}}

\newcommand{\Z}{\mathbb{Z}}

\newcommand{\frakn}{\mathfrak{n}}

\newcommand{\T}{\mathbf{T}}
\newcommand{\Ad}{\mathrm{Ad}}

\newcommand{\adele}{\mathbb{A}}
\newcommand{\Afinite}{\mathbb{A}_{\mathrm{f}}}
\newcommand{\HH}{\mathbf{H}}

\newcommand{\ab}{\mathrm{ab}}

\newcommand{\p}{\mathfrak{p}}

\newcommand{\Stor}{S-\mathrm{modf}}

\newcommand{\Gal}{\mathrm{Gal}}

\renewcommand{\H}{\mathbf{H}}

\newcommand{\g}{\mathfrak{g}}

\newcommand{\Norm}{\mathrm{N}}
\newcommand{\OO}{\mathcal{O}}

\DeclareFontFamily{OT1}{rsfs}{}

\DeclareFontShape{OT1}{rsfs}{n}{it}{<-> rsfs10}{}
\DeclareMathAlphabet{\mathscr}{OT1}{rsfs}{n}{it}
\newcommand{\Q}{\mathbb{Q}}
\newcommand{\Hom}{\mathrm{Hom}}

\newcommand{\G}{\mathbf{G}}

\newcommand{\R}{\mathbb{R}}

\newcommand{\A}{\mathbb{A}}
\newcommand{\ZZ}{\mathbf{Z}}
\newcommand{\TT}{\mathbb{T}}

\swapnumbers
\newtheorem{thm}[subsection]{Theorem}  

\newtheorem{lem}[subsection]{Lemma}         
\newtheorem*{lem*}{Lemma}         
\newtheorem{prop}[subsection]{Proposition}
\newtheorem*{prop*}{Proposition}

\newtheorem{conj}[subsection]{Conjecture}
 \newtheorem*{quest}{Question}

\theoremstyle{definition}

\numberwithin{equation}{subsection}

\newcommand{\N}{\mathbb N}

\newcommand{\bm}{\mathrm{BM}}

\renewcommand{\H}{\mathbb H}  %

\newcommand{\GL}{\mathrm{GL}}
\newcommand{\SL}{\mathrm{SL}}

\newcommand{\Lie}{\mathrm{Lie}}
\newcommand{\torus}{\mathbf{A}}

\newcommand{\PGL}{\mathrm{PGL}}

\newcommand{\vol}{\mathrm{vol}}

\newcommand{\PSL}{\mathrm{PSL}}

\setcounter{tocdepth}{1} 
\begin{document}

\begin{abstract}
Let $M$ be an arithmetic hyperbolic $3$-manifold, such as a Bianchi manifold.
We conjecture that there is a basis for the second homology
of $M$, where each  basis element is represented by a surface of `low' genus, and give evidence for this. 
We explain the relationship between this conjecture  and  the study of torsion homology growth. 
\end{abstract}

\maketitle
\tableofcontents

\section{Introduction}

In this paper we   formulate and discuss a conjecture about topological complexity of arithmetic manifolds, 
i.e. locally symmetric spaces associated to arithmetic groups. This conjecture is closely related
to studying growth of torsion in homology. Roughly speaking, the conjecture is  that
\begin{quote}
{\em homology classes on arithmetic manifolds are represented by cycles of low complexity. }
\end{quote}
From a strictly arithmetic perspective, what may be most interesting is that
our proofs suggest that the {\em topological}  complexity of these cycles reflect the {\em arithmetic } complexity of the  
(Langlands-)associated varieties (i.e. the height of equations needed to define the varieties). 

We will study this in detail in a simple interesting case, namely, 
that of arithmetic hyperbolic $3$-manifolds.
To simplify matters as far as possible,
we study  only sequences  that are coverings of a fixed base manifold $M_0$.  

\begin{conj} \label{conj}
There is a constant $C=C(M_0)$ such that,  for any  arithmetic congruence hyperbolic $3$-manifold $M \rightarrow M_0$ of volume $V$,   
there exist immersed surfaces $S_i$ of genus $\leq V^C$
such that the $[S_i]$ span $H_2(M, \R)$. 
\end{conj}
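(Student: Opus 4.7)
The plan is to realize each Hecke eigenclass in $H_2(M, \R)$ by an explicit immersed surface whose genus is polynomial in $V$, and then argue that such classes span. First I would use Poincar\'e duality to identify $H_2(M, \R) \otimes \C$ with $H^1(M, \C)$ and decompose the latter via Matsushima's formula as a direct sum over cohomological cuspidal automorphic representations $\pi$ of the ambient inner form of $\PGL_2$ over the imaginary quadratic field (or more generally, CM field) defining $M$. It is enough, by duality, to exhibit for each such $\pi$ an immersed surface $S_\pi \subset M$ of genus $\leq V^C$ whose fundamental class pairs nontrivially with the corresponding line in $H^1(M, \C)$, because the collection of such $S_\pi$ will then span $H_2(M, \R)$.

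Next I would construct candidate cycles from the arithmetic structure. For a representation $\pi$ that is a base change from a form $\sigma$ on a smaller group (e.g.\ $\PGL_2$ over $\Q$ or over a totally real subfield), there is a totally geodesic subsurface of $M$ coming from the embedding of an arithmetic Fuchsian subgroup, whose period against $\pi$ computes (up to known factors) a special $L$-value that can be shown nonzero for infinitely many twists. For the remaining $\pi$, I would use Jacquet--Langlands to transfer $\pi$ to a quaternionic automorphic representation $\pi^{JL}$ on an inner form $B$ admitting a suitable quadratic subalgebra: this gives a Shimura curve $X_B$ and, via a geometric map $X_B \to M$ coming from an embedding of quaternion algebras, an immersed surface in $M$ realizing a class that pairs with $\pi$. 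In both situations the constructed surface's area is bounded by the level of $\pi$, hence polynomially in $V$, and its genus is then bounded via Gauss--Bonnet (the hyperbolic area of a genus-$g$ surface is $4\pi(g-1)$), giving the claimed estimate $\mathrm{genus}(S_\pi) \leq V^C$.

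The main obstacle is the spanning requirement: while base change and endoscopic/Jacquet--Langlands transfers produce geometric cycles for a significant piece of $H^1$, the ``generic'' part of the cohomology (not arising from any smaller group via functoriality) has no obvious geometric realization. For such $\pi$, no construction of an immersed surface with nonvanishing period is currently known, and indeed the conjecture predicts that one exists. A realistic path forward is therefore twofold: for the part of $H_2$ reachable by Jacquet--Langlands and base change methods, carry out the volume/genus estimate carefully (tracking how the level and field of definition affect the area of the transferred surface), and for the residual classes, seek a nonconstructive argument---for instance, using LERF and the abundance of immersed essential surfaces guaranteed by Kahn--Markovic in the cover $M$---to produce a large-dimensional subspace of $H_2(M, \R)$ represented by surfaces of polynomial genus, then combine with the Hecke-module structure to force spanning. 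The genuinely hard point is to convert the abstract existence of many immersed surfaces into a quantitative bound on the genus needed to represent an \emph{arbitrary} Hecke eigenclass, and this is where the conjectured link between topological and arithmetic complexity must be exploited rather than merely invoked.
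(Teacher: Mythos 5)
The statement you are addressing is Conjecture \ref{conj}, which the paper itself does not prove: it remains open in general, and the paper establishes it only in two special cases --- (i) compact quotients coming from $D \otimes F$ with all cohomology of base-change type, and (ii) Bianchi manifolds with one-dimensional cuspidal cohomology attached to a non-CM elliptic curve, conditionally on equivariant BSD and Frey--Szpiro. So no complete argument should be expected, and your proposal rightly stops short of one. For the base-change part your plan is essentially the paper's: the totally geodesic cycles are the special cycles $[Z(L)]_{g,\mu}$ coming from $\HH = \GL_1(D)$, their pairing with a class in the $\sigma$-isotypic part is a period integral $P_\chi$, and the nonvanishing of $P_\chi$ on $\sigma$ is exactly the statement that $\sigma$ is distinguished, which characterizes base change (Proposition \ref{P10}). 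Where your sketch is too quick is the spanning step: the $\sigma$-isotypic piece $\mathrm{Hom}_{K_{\infty}}(\wedge^2\p, V_\sigma^K)$ is not a line (its dimension $s$ can be large), and even producing one surface per eigenclass does not give a spanning set --- one needs a full-rank matrix of pairings. The paper gets this from two quantitative inputs you do not supply: a polynomially effective linear independence of Hecke eigenvalues (via Brumley's quantitative multiplicity one, Lemma \ref{D}), which isolates each $\sigma_{j_0}$ by a Hecke operator $\sum c_i \TT_{\mathfrak{p}_i}$ with $\Norm\mathfrak{p}_i \leq aV^b$, and an effective local analysis of the functionals $g \mapsto P_R(g\cdot\varphi_{j,R})$ (asymptotics of matrix coefficients on the spherical variety $G_p/H_p$) showing that translates $g_k$ with $\|g_k\| \leq cV^d$ already produce a nonsingular $s\times s$ matrix. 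Without these, the polynomial genus bound for a \emph{spanning} set does not follow.

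For the classes not reachable by base change your proposal diverges from the paper and is, as you acknowledge, speculative: Kahn--Markovic produces many essential surfaces but gives no control on which homology classes they represent, let alone a polynomial genus bound for an arbitrary Hecke eigenclass, and the paper makes no claim in this direction. The paper's second established case takes a genuinely different route that your outline does not touch: in the Bianchi setting (where the natural geodesic subsurfaces are unavailable or noncompact) one dualizes to $H_1$ and modular symbols, splits each symbol $\langle 0,\infty; g_f\rangle$ as a sum of two symbols with controlled conductor and denominator (the denominator-avoidance lemma), expresses the periods as twisted $L$-values $L(\tfrac12, \pi\times\psi)$, bounds their denominators via the equivariant BSD conjecture, and bounds the period $\Omega_E$ from below via Frey--Szpiro. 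This is where the paper's slogan that topological complexity of cycles reflects the arithmetic height of the associated elliptic curve actually enters, and it is the ingredient your plan would need in place of the nonconstructive argument you gesture at.
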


Thus the conjecture is related to understanding the Gromov-Thurston norm on $H_2$; 
it can also be  phrased in  terms of a `harmonic' norm on $H_2$ whose definition uses the hyperbolic metric.
See \S \ref{GHT}. It follows from Gabai's generalization \cite[p. 3]{Gabai} of Dehn's lemma to higher genus that we may as well
ask the $S_i$ to be {\it embedded} in Conjecture \ref{conj}.

 It is plausible, although we are not sure, that this conjecture is really a special feature of arithmetic manifolds. For the purpose
 of this paper, ``arithmetic manifold'' means more properly ``arithmetic congruence manifold.''
Firstly, our proofs certainly use number theory heavily.   Secondly,  it seems that any `naive' 
analysis yields only an exponential bound on $[S_i]$ in terms
 of $V$ or the topological complexity of $M$ -- indeed,   work in progress of Jeff Brock and Nathan Dunfield
indeed strongly suggests that this exponential bound
 cannot be improved.  Finally, numerical data (see e.g. \cite{Dunfield} or \cite{sengun-tetrahedral}),
 although far from conclusive, also appears to differ between nonarithmetic and arithmetic cases.  See \S \ref{arith} for a little further discussion. 

This conjecture is motivated by the study of torsion classes, and indeed
in trying to understand the obstruction to extending previous results (see \cite{BV}) on `strongly acyclic' coefficient
systems to the case of the trivial local system. We will prove: 
\begin{thm} \label{torsiontheorem}
 Let $(M_i \rightarrow M_0)_{i \in \N}$ be a sequence of arithmetic congruence hyperbolic $3$-manifolds s.t. $M_0$ is compact and $V_i = \vol (M_i )$ goes to infinity.
Assume the following two conditions are satisfied:
\begin{itemize}
\item[(i)] `Few small eigenvalues':  For every $\varepsilon>0$ there exists some positive real number $c$ such that  
\begin{equation} \label{Strongcondition} \limsup_{i \to \infty} \frac{1}{V_i} \sum_{0 < \lambda \leq c} | \log \lambda| \leq \varepsilon. \end{equation}
Here $\lambda$ ranges over eigenvalues of the $1$-form Laplacian $\Delta$ on $M_i$.
Indeed we may even  replace the condition by 
  the condition that \begin{equation} \label{Weakcondition} \lim_{i \to \infty} \frac{1}{V_i} \sum_{0 < \lambda \leq V_i^{-\delta}} | \log \lambda| = 0 \end{equation}
  for every $\delta > 0$. 
   
%
\item[(ii)]  `Small Betti numbers': $b_1(M_i, \Q) = o (\frac{V_i}{\log V_i})$.
\end{itemize}

Then, if Conjecture \ref{conj} holds, as $i \to \infty$, we have:  
\begin{equation} \label{eqgrowth}  \frac{ \log \# H_1(M_i, \Z)_{\tors} }{V_i} \longrightarrow \frac{1}{6 \pi}. \end{equation}
\end{thm}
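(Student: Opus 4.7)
\smallskip

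\noindent The plan is to invoke the Cheeger--M\"uller theorem to write $\log|H_1|_{\tors}$ as the sum of an analytic-torsion contribution plus a regulator contribution, and then to control each via, respectively, hypothesis~(i) and Conjecture~\ref{conj} together with hypothesis~(ii). Concretely, for the trivial $\R$-local system on a closed hyperbolic $3$-manifold $M$, the Cheeger--M\"uller theorem, combined with the comparison between the $L^2$-orthonormal basis of harmonic cohomology and an integral basis of $H^*(M,\Z)/\tors$, yields an identity of the shape
\begin{equation}\label{eq:sketchCM}
\log|H_1(M,\Z)_{\tors}| \;=\; -\log T_{RS}(M) + R(M) + O(\log V),
\end{equation}
where $R(M)$ is a $\Z$-linear combination of the regulators $\log R_k(M) := \log \mathrm{covol}_{L^2}\!\bigl(H_k(M,\Z)_{\free} \hookrightarrow H_k(M,\R)\bigr)$, the $L^2$-structure on $H_k(M,\R)$ being the harmonic one. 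It then suffices to establish $-\log T_{RS}(M_i)/V_i \to 1/(6\pi)$ and $R(M_i) = o(V_i)$.

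\emph{Analytic side.} Via the Hodge $*$-isomorphism $\Delta_k \simeq \Delta_{3-k}$ one has $2\log T_{RS}(M) = \log\det{}'\Delta_1 - 3\log\det{}'\Delta_0$. The $\Delta_0$ spectral gap is uniform on congruence arithmetic quotients (Selberg/Burger--Sarnak), and a Plancherel/heat-kernel computation gives $\log\det{}'\Delta_k/V_i \to c_k$ once the small eigenvalues of $\Delta_1$ are removed, with $(c_1 - 3c_0)/2 = -1/(6\pi)$. The small-eigenvalue contribution of $\Delta_1$ is controlled by hypothesis~(i): one splits the sum at the threshold $c$ (respectively $V_i^{-\delta}$) and applies Weyl's law in the middle range. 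Hence $\log T_{RS}(M_i)/V_i \to -1/(6\pi)$.

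\emph{Regulator bound.} By Hodge $*$ and Poincar\'e duality, the regulator $R_1$ admits essentially the same bound as $R_2$ (the integer lattices being identified under a perfect pairing), so we focus on bounding $R_2$. By Conjecture~\ref{conj}, $H_2(M_i,\R)$ is spanned by classes of embedded surfaces $S_j$ of genus $g_j \le V_i^C$. Each $[S_j]$ has a least-area representative of area $\le 2\pi|\chi(S_j)| \le 4\pi V_i^C$ by Gauss--Bonnet in $\H^3$; combined with a sup-norm bound on harmonic $2$-forms (polynomial in $V_i$ for congruence quotients, via Sobolev embedding and the uniform thick-part injectivity radius), this gives $\|[S_j]\|_{L^2} \le V_i^{C'}$. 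Extracting a maximal $\R$-linearly independent subfamily of size $b_2 = b_1$ produces a sublattice $L \subset H_2(M_i,\Z)_{\free}$; by Hadamard $R_2 \le \mathrm{covol}(L) \le V_i^{C' b_1}$, whence $\log R_2 \le C' b_1 \log V_i = o(V_i)$ by hypothesis~(ii). Combined with the analytic step and \eqref{eq:sketchCM}, this yields \eqref{eqgrowth}.

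The hardest ingredient is this regulator step: converting the purely topological genus bound of Conjecture~\ref{conj} into a bound on the harmonic $L^2$-norm requires both Gauss--Bonnet for least-area representatives and a nontrivial sup-norm estimate on harmonic forms, and it is here that the congruence arithmetic hypothesis is substantively used, beyond its role in securing the $\Delta_0$ spectral gap.
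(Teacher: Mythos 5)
Your overall architecture (Cheeger--M\"uller, analytic-torsion limit from hypothesis~(i), regulator control from Conjecture~\ref{conj} and hypothesis~(ii)) is the same as the paper's, and your treatment of the analytic side and of the \emph{upper} bound on $R_2$ is essentially sound: the paper routes that bound through a comparison of the harmonic norm with the Gromov--Thurston norm (Proposition \ref{P1}) rather than through least-area representatives and Gauss--Bonnet, but the two are interchangeable. There is, however, a genuine gap in your regulator step. Since $n=3$, Poincar\'e duality gives $R_1(M)\cdot R_2(M)=1$: the lattices $H_1(M,\Z)_{\free}$ and $H_2(M,\Z)_{\free}$ are \emph{dual} lattices under the unimodular intersection pairing combined with the Hodge-star isometry, so their covolumes are reciprocal, not equal. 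Your sentence ``$R_1$ admits essentially the same bound as $R_2$ \dots so we focus on bounding $R_2$'' is therefore false as stated: an upper bound on $R_2$ yields only a \emph{lower} bound on $R_1$. The Cheeger--M\"uller identity here reads $\log|H_1(M,\Z)_{\tors}| = -\log T_{\rm an}(M) + \log\vol(M) - 2\log R_2(M)$, so to conclude you need $|\log R_2(M_i)| = o(V_i)$, i.e.\ \emph{both} an upper bound on $R_2$ (which you prove, conditionally on Conjecture \ref{conj}) \emph{and} a lower bound on $R_2$ --- equivalently an upper bound on $R_1$. Nothing in your sketch supplies the latter.

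The paper proves that missing bound unconditionally (Proposition \ref{P}, inequality \eqref{R1bound}): for a cover $M\to M_0$ of degree $[\Gamma_0:\Gamma]$, a Schreier-graph count produces $O([\Gamma_0:\Gamma])$ generators of $\pi_1(M)$ represented by closed geodesics of length $O([\Gamma_0:\Gamma])$; the period of an $L^2$-normalized harmonic $1$-form over such a loop is $O(\vol M)$ by a Sobolev estimate, and Hadamard's inequality then gives $R_1(M)\ll \vol(M)^{Cb(M)}$, hence $\log R_1=o(V)$ under hypothesis~(ii). You would need to add this argument (or a substitute) for the proof to close. Two smaller points: (a) the theorem allows the weaker hypothesis \eqref{Weakcondition}, and the paper reduces it to \eqref{Strongcondition} by a trace-formula count of eigenvalues in short windows (Lemma \ref{vp}), a reduction your sketch does not address; (b) the small-$t$ part of the heat-trace comparison is handled in the paper via Benjamini--Schramm convergence of congruence covers, not merely a thick-part injectivity-radius bound, though this is in the spirit of what you wrote.
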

For the proof see \S \ref{sec:R1} (it also uses results from \S \ref{R1boundSec} and \S \ref{GHT}).  
Heuristically, we expect (i) to be valid with very few exceptions, and (ii) to be always valid; see \cite{CE,BLLS} for evidence,  and also \cite{Marshall} in a somewhat different direction.

 The proof of this Theorem also gives a partial converse. 
For instance, if we suppose \eqref{eqgrowth} and 
a strengthening of (ii) -- that the Betti numbers $b_1$ actually remain bounded --
then (i) must be true, and also 
a weak form of the Conjecture,   with ``polynomial''
replaced by ``subexponential,'' must hold.

Now the central result of our paper:

\begin{thm}
Conjecture \ref{conj} is true in the two following cases: 
\begin{itemize} 
\item[(i)] When $M_0$ arises from a division algebra $D \otimes F$ where $D$ is a quaternion algebra over $\Q$ and $F$ is an imaginary quadratic field, 
  $M$ is defined by a principal congruence subgroup\footnote{  This is not an onerous restriction; is easy to reduce the conjecture for other standard subgroup structures, such as $\Gamma_0$-structure, to this case. }, and all the 
cohomology of $M$ is of base-change type (\S \ref{BaseChange});
\item[(ii)] When $M_0$ is a Bianchi manifold  (for us: an adelic manifold whose components
are of the form $\Gamma_0(\mathfrak{n}) \backslash \H^3$),
and the cuspidal cohomology of $M$ is $1$-dimensional, associated to a non-CM elliptic curve of conductor $\mathfrak{n}$, 
for which we assume the equivariant BSD conjecture (see \eqref{eqBSD})  and the Frey--Szpiro conjecture (see \cite[F.3.2]{HS}). 
\end{itemize}
\end{thm}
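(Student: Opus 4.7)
I would attack the conjecture by producing, for each homology class, a low-genus surface representative that exploits the assumed automorphic structure.

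\emph{Reductions.} Decompose $H_2(M,\R) = H_2^{\mathrm{Eis}} \oplus H_2^{\mathrm{cusp}}$. In case (ii), the Eisenstein summand is spanned by the boundary torus cross-sections of the cusps of the Bianchi manifold; these are genus-one immersed surfaces, and their number is $O(V)$. In case (i) the manifold $M$ is compact and no Eisenstein contribution arises, so only the cuspidal part needs to be addressed.

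\emph{Case (i).} Let $\sigma$ be the involution of $M$ induced by complex conjugation on $F$ acting trivially on $D$. The fixed locus $M^{\sigma}$ is a finite union of totally geodesic surfaces, each an arithmetic quotient coming from the Shimura curve associated with the quaternion algebra $D/\Q$; because we are at a principal congruence level, standard estimates bound the genus of each component by $V^{O(1)}$. By Waldspurger-type period formulas (and their Jacquet--Langlands variants on quaternion algebras), every base-change automorphic class $\omega_{\Pi}$ on $M$ pairs nontrivially with $[M^{\sigma}]$ after twisting by a Hecke operator of bounded degree; since all cohomology is assumed to be of base-change type, the Hecke translates of $[M^{\sigma}]$ then span $H_2^{\mathrm{cusp}}$, and each translate remains an immersed surface of genus $V^{O(1)}$.

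\emph{Case (ii).} By assumption $H^2_{\mathrm{cusp}}(M,\R) = \R\cdot[\omega_f]$, where $f$ corresponds to a non-CM elliptic curve $E/F$ of conductor $\mathfrak{n}$. It suffices to find a single surface $S \subset M$ with $\langle [S], [\omega_f]\rangle \neq 0$ and $\mathrm{genus}(S) \leq V^{O(1)}$. When $E$ is a base change from $\Q$, one takes $S$ to be a totally geodesic modular curve inside $M$ arising from $\PGL_2(\Q) \hookrightarrow \PGL_2(F)$, with the genus bound standard. In general, $S$ is produced by combining a Jacquet--Langlands transfer with a modular-symbol cycle construction attached to the Shimura data of $E$. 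The equivariant BSD conjecture furnishes the equation $\int_S \omega_f \sim L(E/F,1)/\Omega_E$ up to an algebraic factor controlled by the degree of the relevant (possibly virtual) modular parametrization; the Frey--Szpiro conjecture bounds this degree polynomially in $\mathrm{N}(\mathfrak{n})$, and hence in $V$. Nonvanishing of the pairing then follows because the one-dimensionality of the cuspidal cohomology forces $[\omega_f]$ to realize the Mordell--Weil / Sha data of $E$ through exactly this class.

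\emph{Main obstacle.} The hardest step is case (ii) in the non-base-change regime: no totally geodesic surface in $M$ can detect $f$, so $S$ must be extracted from a genuinely arithmetic cycle on an auxiliary manifold and then pushed into $M$. The equivariant BSD conjecture serves as a substitute for an algebraic modular parametrization (which does not exist, as quaternion algebras over an imaginary quadratic field still yield $3$-manifolds rather than Shimura curves), while Frey--Szpiro supplies the polynomial bound on cycle complexity; the delicate bookkeeping is to verify that the $L$-value comparison translates into an actual polynomial bound on the genus of the resulting surface embedded in $M$.
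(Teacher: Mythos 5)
Your choice of candidate cycles is the paper's: in case (i) the totally geodesic surfaces are exactly the Shimura curves attached to $\HH=\GL_1(D)$ (the fixed locus of the Galois involution), and distinction by $\HH$-periods does characterize base change. But the sentence ``every base-change class pairs nontrivially with $[M^{\sigma}]$ after twisting by a Hecke operator of bounded degree, so the Hecke translates span'' is where the entire proof lives, and as stated it is partly unjustified and partly false. First, separating the $\sigma_j$ (their number grows linearly in $V$) by Hecke operators supported on ideals of polynomially bounded norm is a \emph{quantitative} multiplicity-one statement; the paper proves it (Lemma \ref{D}) via Brumley's effective Rankin--Selberg bounds and diagonal dominance, and no Waldspurger-type formula substitutes for it. Second, unramified Hecke operators act by scalars on each $\sigma_j$, so Hecke translates of one cycle can never separate linearly independent vectors inside a single $V_{\sigma_0}^{K}$ when $\dim V_{\sigma_0}^{K}>1$. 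One must also translate the cycle by elements $g\in\G(\Q_R)$ at the ramified places with $\|g\|$ polynomially bounded and show the matrix of local periods $\bigl(P_R(g_k\varphi_{j,R})\bigr)$ is nonsingular, which requires effective control of when matrix coefficients (including supercuspidal ones) follow their asymptotic expansion (\S\ref{step3}). Your proposal addresses neither point.

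In case (ii), reducing to ``one surface $S$ with $\langle[S],[\omega_f]\rangle\neq 0$'' misstates the task: nonvanishing of a pairing gives no genus bound, and you never actually construct $S$ (``combining a Jacquet--Langlands transfer with a modular-symbol cycle construction'' is not a construction --- modular symbols are $1$-cycles). The paper's route is to normalize the harmonic form $\omega$ so that \emph{all} its periods over $H_1$ are integers while $\|\omega\|_{L^2}$ stays polynomially bounded (Theorem \ref{alltogether}); the integral class Poincar\'e--Lefschetz dual to $\omega$ then has polynomially bounded Gromov--Thurston norm by the comparison of \S\ref{GHT}, and Gabai's theorem yields the embedded surface. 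Integrality is the hard part: one splits a modular symbol into two with controlled conductor and denominator (\S\ref{denomavoid}), unfolds the torus period into twisted $L$-values $L(\tfrac12,\pi\times\psi)$ for \emph{many} finite-order characters $\psi$, applies equivariant BSD to each to bound denominators by $\Omega_E/|E(F_{\psi})_{\tors}|^{2}$, and controls the torsion via Masser--W\"ustholz and Serre's open image theorem. Frey--Szpiro enters only to bound $\Omega_E^{-1}$ through the Faltings height; there is no modular parametrization degree in this setting, so the mechanism you assign to these two conjectures does not exist, and the polynomial genus bound is never actually derived in your argument.
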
  

What the proof of (ii) really gives is a relationship between the complexity of $H_2$-cycles
and the height of the elliptic curve (i.e., the minimal size of $A,B$ so it can be expressed as $y^2=x^3+Ax+B$.) Thus, ``the topological complexity of cycles in $H_2$ reflect
the arithmetic height of $E$.''   This  may be a general phenomenon (it was also suggested in \cite{CV}). 

A few words on the conjectures which appear in (ii): The Frey--Szpiro conjecture is a conjecture in Diophantine analysis
which follows from the ABC conjecture  (and thus is very strongly expected from a heuristic viewpoint).  It asserts
that the height of an elliptic curve cannot be too large relative to its conductor. 
Moreover,  for the purposes of establishing growth of torsion, as in Theorem \ref{torsiontheorem},
we do not need the full strength of Conjecture \ref{conj};
a weaker version with sub-exponential bounds  would suffice,
and correspondingly a very weak ``sub-exponential'' version of Frey--Szpiro would do.

We note that both case (i) and case (ii) are quite common over imaginary quadratic fields!
For (i),  we present data in \S \ref{numerics}:
e.g. for the first 40 rational primes $p$ that are inert in $\Q(\sqrt{-7})$, the cohomology of $\Gamma_0(\mathfrak{p})$, where $\mathfrak{p}=( p)$, is entirely base change in all but
$6$ cases.  For (ii) we refer to \cite[p.17]{SengunExpMath}; in the data there, at prime level,
situation (ii) occurs in the majority of cases where $b_{1, !} > 0$, see also \S \ref{numerics2}.  

Also, (i) and (ii) illustrate two different extremes of the Theorem:

For (i)  it's  easy to think of candidate surfaces in $H_2$ ---  the challenge is, rather,  that the dimension of $H_2$
is increasing rapidly and it is not clear that the candidate surfaces span `enough' homology.
In fact, our result applies to all $M$, but bounds only the regulator of the `base-change part'
of cohomology.  One can see (i) as an effectivization of a result of Harder, Langlands and Rapoport \cite{HLR},
although they work with Hilbert modular surfaces rather than hyperbolic $3$-manifolds. The main global ingredient is a (polynomially strong) quantitative form of 
the `multiplicity one' theorem in the theory of automorphic forms but there is also (surprisingly) a nontrivial local ingredient:  one needs good control 
on (e.g.) support of matrix coefficients of supercuspidal representations. 
 In fact, one motivation to study example (i) is that our result
shows that the   regulator $R_2$
(see \S \ref{sec:R1}) grows subexponentially, whereas this was not at all clear by looking at numerical evidence! --- see \S \ref{numerics3}.
  (There is actually another setting where $H_2$ grows quickly for easily comprehensible reasons -- the setting of ``oldforms,''
  whereby one pulls back forms from a surface of lower level. In that case, it is not difficult to see
  that the complexity of the cycles remains controlled.)

For (ii) the challenge is instead that there are no obvious cycles in $H_2$;
we work with $H_1$ and modular symbols, and dualize; the main point is 
  to replace a modular symbol by the sum of two well-chosen others to avoid unpleasant dominators. 
  The equivariant BSD conjecture enters to compute cycle integrals over modular symbols. 
The Szpiro conjecture enters to give a lower bound on the period of an elliptic curve.
We note that this result is closely related to prior work of Goldfeld \cite{Goldfeld},
although the techniques of proof are necessarily different owing to the lack of an algebraic structure.

\subsection{The role of arithmeticity} \label{arith}
As we have mentioned, it seems plausible that Conjecture \ref{conj} is really specific to arithmetic. 
It would be desirable to have a specific counterexample in this direction, that is to say, 
exhibiting the behavior that Conjecture \ref{conj} disallows in the arithmetic case.

From the point of view of mirroring the situation of this paper, it would be ideal to have an answer to the following: 

\begin{quest} Can one produce a sequence of hyperbolic manifolds $M_i$
with the  following properties?
\begin{itemize}  \item[-] the volumes  of $M_i$ go to infinity (or, even better, the sequence $(M_i)$ BS-converges toward $\H^3$, see \S \ref{BS}),
\item[-] The injectivity radii of $M_i$ remain bounded below, and 
\item[-] in any basis for $H_2(M_i, \Z)$, at least one basis element {\em cannot}
be represented by a surface of genus $\leq (\vol M_i)^{i}$?
\end{itemize}
\end{quest}

Jeff Brock and Nathan Dunfield have made progress in constructing such a sequence.

Here is some intuition as to why arithmeticity might play a role: In general, generators for $H_2(M, \Z)$
 might be of exponential complexity. This comes down to analyzing the kernel of a matrix
 $M$ that expresses adjacency between $1$-cells and $2$-cells in a triangulation. 
Now, even given a matrix
 $A \in \mathrm{M}_n(\Z)$ of zeroes and ones, generators for the kernel of $A$  on $\Z^n$ could
 have exponentially large (in $n$) entries. However, in the arithmetic case, 
 the existence of Hecke operators means that the `adjacency matrix' $A$  is (heuristically speaking)
 forced to commute with many other symmetries.   One might expect
this to reduce its effective size --- a phenomenon that is perhaps parallel
to the observed difference  between eigenvalue statistics in the arithmetic and nonarithmetic case 
(see \cite{GUE} for discussion). 

\subsection{Acknowledgements}
We would like to thank Ian Agol, Farrell Brumley, Nathan Dunfield,  Dipendra Prasad and Jonathan Pfaff for helpful comments and providing useful references. 
      
\section{Relationship to torsion and the proof of Theorem \ref{torsiontheorem}} \label{sec:R1}

In this section and the next, we will give the proof of Theorem \ref{torsiontheorem}. 
We first recall the definition of `regulators' from a prior paper \cite{BV} by the first- and last- named author (N.B. and A.V.)

\subsection{Regulators} 
Let $M$ be a compact Riemannian manifold of dimension $n$. 
We define
the {\it $H_j$-regulator} of $M$ as  the volume of $H_j(M, \Z)$  with respect to the metric on $H_j(M, \R)$ defined by harmonic forms --- 
the `harmonic metric.' That is, 
\begin{equation} \label{Rjdef} R_j(M) = \frac{ \det \left( \int_{\gamma_k} \omega_{\ell} \right)}{ \sqrt{ \det  \langle  \omega_k, \omega_{\ell}  \rangle}} \end{equation}
where $\gamma_k \in H_j (M, \Z )$ project to a basis for $H_j(M, \Z ) / H_j (M, \Z )_{\rm tors}$ 
and $\omega_{\ell}$ are a basis for the space of $L^2$ harmonic forms on $M$. Note that $R_0 (M) = \frac{1}{\sqrt{\vol (M)}}$, $R_n (M) = \sqrt{\vol (M)}$ and, by Poincar\'e duality, we have: 
\begin{equation*}
R_j (M) \cdot R_{n-j} (M) = 1.
\end{equation*}

\subsection{} A celebrated theorem of Cheeger and M{\"u}ller \cite{Cheeger, Mueller} relates the torsion homology groups and the regulators to the analytic torsion of $M$. In the special case $n=3$, the theorem of Cheeger and M{\"u}ller implies that 
$$|H_1(M, \Z)_{\tors}| \cdot  \frac{R_0 R_2}{R_1 R_3} = T_{\rm an}(M)^{-1},$$
where $T_{\rm an} (M)$ is the analytic torsion of the manifold $M$. We furthermore note that
$$\frac{R_0 R_2}{R_1 R_3} =  \frac{R_2^2}{\vol (M)} = \frac{1}{R_1^2 \vol (M)}.$$

\subsection{Benjamini-Schramm convergence} \label{BS}
For a hyperbolic manifold $M$ we define $M_{<R}$ to be the $R$-thin part of $M$, i.e. the part of $M$ where the local injectivity radius is $<R$. 
Now let $(M_i \to M_0)_{i \in \N}$ be a sequence of finite covers of a fixed compact hyperbolic $3$-manifolds. Following \cite{ABBGNRS}, we say that the sequence
$(M_i)_{i \in \N}$ {\it BS-converges} to $\H^3$ if for all $R>0$ one has $\vol (M_i )_{<R} / \vol (M_i ) \to 0$. It follows from \cite[Theorem 1.12]{ABBGNRS} that if $(M_i \to M_0)_{i \in \N}$ is a sequence of arithmetic congruence compact hyperbolic manifolds s.t. $V_i = \vol (M_i )$ goes to infinity then $(M_i)_{i \in \N}$ BS-converges to $\H^3$. 
The proof of Theorem \ref{torsiontheorem} then follows from the following three ingredients: 

\subsection{First ingredient} We shall show in the next section (Proposition \ref{P}) that there exists some constant $C$ s.t. 
\begin{equation} \label{R1bound} R_1 (M_i) \ll \vol(M_i)^{C b(M_i)}, \end{equation}
where $b(M)= b_1 (M,\Q)= b_2 (M,\Q)$ is the Betti number. In particular, so long as $b(M_i)$ grows as $o (\frac{V_i}{\log V_i})$,
the subexponential growth of $R_1 (M_i)$ follows.   (Here and below, subexponential means subexponential in $V_i$). 

\subsection{Second ingredient} We will also show in \S \ref{R2P} that,  assuming Conjecture \ref{conj}, there exists a constant $C$ such that \begin{equation} \label{R2bound} R_2(M_i) \ll \vol(M_i)^{C b(M_i)}. \end{equation}
So here again, as long as $b(M_i)$ grows as $o (\frac{V_i}{\log V_i})$,
the subexponential growth of $R_2(M_i)$ {\em follows} from Conjecture \ref{conj}.

\subsection{Third ingredient} Finally,  the condition `few small eigenvalues' from Theorem \ref{torsiontheorem} implies that 
\begin{equation} \label{Tcv}
\frac{\log T_{\rm an} (M_i)}{V_i} \to \tau_{\H^3}^{(2)} = -\frac{1}{6\pi}.
\end{equation}
It follows from the definition of analytic torsion and well known properties of the spectrum of the Laplace operators on Riemannian $3$-manifolds
(see e.g. in \cite{BV}) that it is enough to prove that 
\begin{equation} \label{lim1}
\frac{d}{ds} \big|_{s=0} \frac{1}{\Gamma (s)} \int_0^{+\infty} t^{s-1} \frac{1}{V_i} \int_{M_i} \left( \mathrm{tr} e^{-t \Delta^{(2)}} (\tilde x, \tilde x) -  (\mathrm{tr} e^{-t \Delta_i} (x,x) -b_1 (M_i))
\right) dx dt \to 0.
\end{equation}
Here $\Delta_i$, resp. $\Delta^{(2)}$, is the Laplace operator on square-integrable $1$-forms on $M_i$, resp. $\H^3$, and $\tilde x$ is an arbitrary lift of $x$ to $\H^3$.

Since $b(M_i)$ grows as $o (\frac{V_i}{\log V_i})$ the proof of the limit \eqref{lim1} follows the same lines as \cite[Theorem 4.5]{BV} under the assumptions that
\begin{enumerate}
\item the injectivity radius of $M_i$ goes to infinity; and
\item there exists some positive $c$ such that for all $M_i$
the lowest eigenvalue of $\Delta_i$ is bigger than $c$. 
\end{enumerate}
The first assumption is used to handle the `small $t$' contribution to the limit \eqref{lim1}. In fact the proof only uses the fact that the local injectivity radius 
is `almost everywhere' going to infinity, the condition is precisely that the sequence $(M_i)_{i \in \N}$ BS-converges to $\H^3$. We refer to \cite[\S 8 and 9]{ABBGNRS} for more details in particular on how to bound the size of the heat kernel at the bad points.

The second assumption is used to handle the `large $t$' contribution; it more precisely implies that for sufficiently large $t$ each individual term of the 
difference in \eqref{lim1} can be made arbitrary small. However this spectral gap assumption never holds for the trivial coefficient system; we 
replace that instead by assumption (i) of Theorem \ref{torsiontheorem}. 

 Let $\varepsilon$ and $c$ be as in  assumption (i) of Theorem \ref{torsiontheorem}.  Without loss of generality, $c<1$. Spectral expansion on the compact manifold $M_i$ and classical Sobolev estimates yield that for any $t\geq 1$ we have: 
\begin{eqnarray*} \int_{M_i} \mathrm{tr} e^{-t\Delta_i '} (x,x)  &=&  \sum_{0 < \lambda}  e^{-\lambda t} \\  & \ll & \sum_{0 < \lambda \leq c} e^{-t \lambda} + 
e^{-c(t-1)} \sum_{\lambda > c} e^{-\lambda}    \\
& \ll &  \sum_{0 < \lambda \leq c} e^{-t \lambda} +e^{-c(t-1)} V_i,  \end{eqnarray*}
where we have denoted by $\Delta_i '$ the restriction of $\Delta_i$ to the
orthogonal complement of its kernel and the implicit constant does not depend on $i$ and $x$; we used the fact that the trace of $e^{-\Delta_i '}$ on $M_i$ can be bounded
by a multiple of $V_i$.  To conclude the proof we just have to remark that 
for any $T \geq 1$ fixed 
\begin{eqnarray}  \nonumber \frac{d}{ds} \big|_{s=0} \frac{1}{\Gamma (s) }\int_T^{+\infty} t^{s-1} \sum_{0 < \lambda \leq c} e^{-t \lambda} dt  &=& \sum_{0 < \lambda \leq c} \int_{T}^{\infty} e^{-t\lambda} \frac{dt}{t}  
 \\ \nonumber    &=& \sum_{0 < \lambda \leq c} \int_{T}^{\infty} \frac{e^{-t}}{t} dt + \sum_{0 <\lambda \leq c} \int_{T}^{\infty} \frac{  e^{-t \lambda}-e^{-t}}{t} dt
\\  \nonumber  &<&  \left( \mbox{number of eigenvalues in $(0,c]$ }\right) e^{-T} +  \sum_{0 < \lambda \leq c}  \log| \lambda |. \end{eqnarray} 

There is a constant $A$ so that the number of eigenvalues in $(0,c]$ is $\leq A V_i$, and we may then
choose $T$ sufficiently large so that $A e^{-T} < \varepsilon$. Thus 
the integral above contributes at most $2 \varepsilon$ to the limit. 
Using \eqref{Strongcondition}, this holds for every $\varepsilon$, so  the proof of \eqref{lim1} follows as in \cite{BV}.

 We have now completed the proof of the Theorem, but assuming (i) in the stronger form \eqref{Strongcondition}.
 To see that \eqref{Weakcondition} suffices: 
  
 \begin{lem} \label{vp} Assume that \eqref{Weakcondition}. Then, 
for every $\varepsilon>0$ there exists some positive real number $c$ such that  
$$\lim_{i \to \infty} \frac{1}{V_i} \sum_{0 < \lambda \leq c} | \log \lambda| \leq \varepsilon.$$
Here $\lambda$ ranges over eigenvalues of  the $1$-form Laplacian $\Delta_i$ for $M_i$. 
\end{lem}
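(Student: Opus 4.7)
The plan is to reduce the claim to convergence of an integral against the Plancherel measure of the $1$-form Laplacian on $\H^3$. Given $\varepsilon > 0$, choose a parameter $\delta > 0$ to be adjusted and a scale $c \in (0,1)$ to be chosen small, and split
\[
\sum_{0 < \lambda \leq c} |\log \lambda| \;=\; \sum_{0 < \lambda \leq V_i^{-\delta}} |\log \lambda| \;+\; \sum_{V_i^{-\delta} < \lambda \leq c} |\log \lambda|.
\]
The first piece, divided by $V_i$, tends to $0$ directly from \eqref{Weakcondition}.

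For the second piece, writing $N_i(t) := \#\{0 < \lambda \leq t\}$ and applying Riemann--Stieltjes integration by parts yields
\[
\sum_{V_i^{-\delta} < \lambda \leq c} |\log \lambda| \;=\; |\log c|\,N_i(c) \;+\; \int_{V_i^{-\delta}}^{c}\!\frac{N_i(t)}{t}\,dt \;-\; \delta \log V_i \cdot N_i(V_i^{-\delta}).
\]
Observe that \eqref{Weakcondition} already implies $N_i(V_i^{-\delta}) = o(V_i / \log V_i)$: every eigenvalue $\lambda \leq V_i^{-\delta}$ contributes at least $\delta \log V_i$ to the sum controlled by \eqref{Weakcondition}. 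Hence the last term is $o(V_i)$, and only the first two terms remain to analyze.

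To handle these, I would invoke Benjamini--Schramm convergence of $(M_i)$ to $\H^3$, which holds in the arithmetic setting by \cite[Theorem 1.12]{ABBGNRS}. This gives pointwise convergence $N_i(t)/V_i \to \mu([0, t])$, where $\mu$ is the Plancherel measure of the $1$-form Laplacian on $\H^3$. The Plancherel density vanishes polynomially at the bottom of the spectrum (coming from Harish-Chandra's $c$-function for $\mathrm{PSL}_2(\C)$), so $|\log t|$ is $d\mu$-integrable near $0$, and
\[
F(c) \;:=\; \int_0^c |\log t|\, d\mu(t) \;\longrightarrow\; 0 \qquad \text{as } c\to 0.
\]
A further integration by parts identifies $\mu([0,c])|\log c| + \int_0^c \mu([0,t])/t\,dt$ with $F(c)$, so upon passing to the limit the surviving contribution of the right-hand side above is $F(c)$. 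Choosing $c$ small enough that $F(c) < \varepsilon$ then completes the argument.

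The main obstacle lies in justifying the passage to the limit inside the integral $\int_{V_i^{-\delta}}^{c} N_i(t)/t\,dt$: its lower endpoint depends on $i$ and the integrand blows up at $0$. This is precisely where the assumption \eqref{Weakcondition} is essential: by cutting off the problematic range $t \leq V_i^{-\delta}$, it leaves a domain on which one can combine pointwise Benjamini--Schramm convergence with dominated convergence; the bound $|\log t| \leq \delta \log V_i$ on the truncated range, together with the auxiliary estimate $N_i(V_i^{-\delta}) \log V_i = o(V_i)$ derived above, suffices to control the error introduced by the moving endpoint.
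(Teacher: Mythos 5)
Your decomposition at $\lambda = V_i^{-\delta}$ and the observation that \eqref{Weakcondition} kills the piece $\lambda \leq V_i^{-\delta}$ (and also forces $N_i(V_i^{-\delta}) = o(V_i/\log V_i)$) agree with the paper's first step. The gap is in the remaining range $V_i^{-\delta} < \lambda \leq c$. Your argument hinges on passing to the limit in $\frac{1}{V_i}\int_{V_i^{-\delta}}^{c} N_i(t)\,t^{-1}\,dt$ by combining pointwise Benjamini--Schramm convergence $N_i(t)/V_i \to \mu([0,t])$ with dominated convergence, but no admissible dominating function is available. Pointwise convergence at each \emph{fixed} $t$ says nothing uniform about $N_i(t_i)/V_i$ for scales $t_i$ shrinking with $i$ (e.g.\ $t_i \asymp 1/\log V_i$), and the only uniform bound at hand, $N_i(t) \leq A V_i$, gives
\[
\frac{1}{V_i}\int_{V_i^{-\delta}}^{\eta} \frac{N_i(t)}{t}\,dt \;\leq\; A\bigl(\log \eta + \delta \log V_i\bigr) \;\longrightarrow\; \infty ,
\]
so the contribution from any fixed neighborhood of $0$ is not controlled. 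The auxiliary estimate $N_i(V_i^{-\delta})\log V_i = o(V_i)$ only handles the boundary term at the moving endpoint; it does not constrain how $N_i(t)$ grows as $t$ decreases from a fixed $\eta$ down to $V_i^{-\delta}$, which is exactly where the sum could a priori pick up a contribution of size $\gg \varepsilon V_i$ from eigenvalues clustering at some intermediate scale.

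What is missing is a quantitative short-interval eigenvalue count uniform in $i$: the paper proves, via the trace formula with a test function supported at scale $1/(c'\log V_i)$ together with the quantitative form of BS-convergence $\vol(M_i)_{<c\log V_i} \leq V_i^{1-\delta}$ from \cite[Theorem 1.12]{ABBGNRS}, that each window $\bigl(\tfrac{k}{c'\log V_i}, \tfrac{k+1}{c'\log V_i}\bigr]$ contains $O(V_i/\log V_i)$ eigenvalues. This yields the lower bounds \eqref{vp1} and \eqref{vp2} on the products of eigenvalues over the ranges $(1/(c'\log V_i), \alpha]$ and $(V_i^{-\delta}, 1/\log V_i]$, which is what actually bounds $\sum |\log\lambda|$ there by $(o_{\alpha}(1)+O(\delta))V_i$. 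Your soft limiting argument cannot substitute for this input; without some such uniform counting estimate at scales between $V_i^{-\delta}$ and $o(1)$, the interchange of limit and integral is unjustified and the proof does not close.
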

 \proof In \cite[Theorem 1.12]{ABBGNRS} a quantitative version of BS-convergence is proven; in particular there exist 
positive constants $c$ and $\delta$ such that for every $i$ one has
$$\vol (M_i)_{<c \log V_i} \leq V_i^{1-\delta}.$$
Fix $M=M_i$ and $V=V_i$.  Employing the trace formula --- as in \cite{Rudnick,Sarnak} --- with a test function supported 
in an interval of length $1/(c' \log V)$ for some positive constant $c'$,   and using the estimates
of \cite[Lemma 7.23]{ABBGNRS}, we can show that for every $k \in \N$ the number of eigenvalues of $\Delta$ between 
$\frac{k}{c'\log V}$ and $\frac{k+1}{c'\log V}$ is bounded by some uniform constant times $V/\log V$.
It follows that 
$$\prod_{\frac{k}{c'\log V} < \lambda \leq \frac{k+1}{c' \log V}} \lambda \gg \left( \frac{k}{\log V} \right)^{\frac{V}{\log V}}.$$
Taking a further product for $k=1 , \ldots , \alpha \log V$ for some positive $\alpha$ (and using Stirling's formula) we get that 
\begin{equation} \label{vp1}
\prod_{\frac{1}{c' \log V} < \lambda \leq \alpha} \lambda \gg \left( \frac{(\alpha \log V) !}{(\log V)^{\alpha \log V}} \right)^{\frac{V}{\log V}} \gg e^{-o(1) V}
\end{equation}
as $\alpha \to 0$.

Now given a positive real number $\delta$ we similarly have:
\begin{equation} \label{vp2}
\prod_{V^{-\delta} < \lambda \leq \frac{1}{\log V}} \lambda \gg \left( \frac{1}{V^\delta } \right)^{\frac{V}{\log V}} = e^{-\delta V}.
\end{equation}
The lemma follows from \eqref{vp1}, \eqref{vp2} and the `few eigenvalues' assumption.
\qed

\section{Bounding $R_1(M)$} \label{R1boundSec}

Here we prove \eqref{R1bound} that was used in the proof of Theorem \ref{torsiontheorem}.
Let $M_0$ be a complete Riemannian $n$-dimensional manifold of pinched nonpositive sectional curvature. We  
more generally prove the following:

\begin{prop} \label{P}
If $M$ varies through a sequence of finite coverings of a fixed compact manifold $M_0$, 
we have:
$$|R_1 (M)|  \ll \mathrm{vol} (M )^{C b (M )}.$$
Here the implicit constants only depend on $M_0$.
\end{prop}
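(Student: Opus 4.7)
The plan is to rewrite $R_1(M)$ as the covolume of the period lattice of harmonic $1$-forms and bound this covolume by producing a short generating family of $1$-cycles. Choosing an $L^2$-orthonormal basis $\omega_1,\dots,\omega_b$ of harmonic $1$-forms reduces the definition \eqref{Rjdef} to $R_1(M)=|\det(\int_{\gamma_k}\omega_\ell)|$ for any $\Z$-basis $\gamma_1,\ldots,\gamma_b$ of $H_1(M,\Z)/\tors$; equivalently, $R_1(M)$ is the covolume of the lattice $L\subset\R^b$ that is the image of the period map $P\colon H_1(M,\Z)/\tors\to\R^b$, $\gamma\mapsto(\int_\gamma\omega_\ell)_\ell$. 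Since any generating set of $H_1(M,\Z)$ maps to a generating set of $L$, it suffices to produce many `short' $1$-cycles $c_j$ and bound $\|P(c_j)\|$.

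Two estimates drive the argument. The analytic input is a uniform pointwise bound $\|\omega_\ell\|_\infty\le C_0$ for every unit-$L^2$ harmonic $1$-form, with $C_0$ depending only on $M_0$. Since $M\to M_0$ is a Riemannian covering of a fixed compact manifold with pinched sectional curvature, the injectivity radius of $M$ is bounded below at every point by a universal $r_0>0$, and a mean-value inequality for harmonic forms on balls of radius $r_0$ (elliptic regularity in uniformly bounded geometry) then gives $|\omega_\ell(x)|^2\lesssim\int_{B(x,r_0)}|\omega_\ell|^2\le 1$. The geometric input is a triangulation of $M$ pulled back from a fixed smooth triangulation of $M_0$: it has $O(\vol M)$ simplices, each isometric to a simplex of $M_0$. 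Fixing a spanning tree $T$ of the $1$-skeleton, each $1$-cell $e\notin T$ closes up through $T$ to a $1$-cycle $c_e$ of combinatorial length $\le 2\,\mathrm{diam}(T)+1=O(\vol M)$, and the family $\{c_e\}_{e\notin T}$ generates $H_1$ of the $1$-skeleton, hence surjects onto $H_1(M,\Z)$.

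Combining the two gives $\|P(c_e)\|_{\R^b}\le\sqrt{b}\,C_0\,\mathrm{length}(c_e)\le C_1\,b^{1/2}\vol(M)$ for every such cycle. Among the $c_e$, choose $b$ whose $P$-images $v_1,\ldots,v_b$ are $\Q$-linearly independent, which is possible since the $P(c_e)$ span $L\otimes_\Z\Q\cong\Q^b$. These $v_k$ span a sublattice $L'\subseteq L$, and Hadamard's inequality yields
\begin{equation*}
\mathrm{covol}(L')\le\prod_{k=1}^b\|v_k\|\le\bigl(C_1\,b^{1/2}\vol M\bigr)^b.
\end{equation*}
Since $[L:L']\ge 1$, we have $R_1(M)=\mathrm{covol}(L)\le\mathrm{covol}(L')$; absorbing the $b$-dependent factors gives $R_1(M)\le\vol(M)^{Cb(M)}$ for some $C=C(M_0)$, as required.

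The main obstacle is the uniform $L^\infty$ bound on harmonic forms: it is precisely the covering hypothesis on $M\to M_0$ (combined with pinched curvature) that keeps the injectivity radius from collapsing and lets one invoke elliptic regularity with constants independent of $M$. Without such a hypothesis, short geodesics could allow harmonic forms to concentrate and destroy the sup-norm bound. Once that estimate is granted, the remaining steps — volume-proportional triangulation, spanning-tree generation of $H_1$, and Hadamard on a full-rank sublattice — are essentially combinatorial and linear-algebraic, and the exponent $b(M)$ records nothing more than the number of Hadamard factors.
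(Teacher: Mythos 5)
Your proof is correct and follows essentially the same route as the paper: a sup-norm bound on $L^2$-normalized harmonic $1$-forms (the paper's Lemma \ref{A}, via Sobolev estimates), a family of $O(\vol M)$ generating $1$-cycles of length $O(\vol M)$ (the paper gets these from the Schreier graph of $\Gamma_0/\Gamma$ and closed geodesics, you from a spanning tree in a pulled-back triangulation --- the same combinatorial device), and Hadamard's inequality together with the observation that passing to a finite-index sublattice of periods only increases the covolume.
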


The following is a consequence of Sobolev estimates: 
\begin{lem} \label{A}
Let $M$ be as in Proposition \ref{P}, let $S\subset M$ be a $k$-submanifold of  (Riemannian) volume $v$ and let $\omega$ be an $L^2$-normalized harmonic differential $k$-form on $M$. Then:   
$$\int_S \omega \ll v.$$
\end{lem}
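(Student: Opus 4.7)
The plan is to reduce the statement to a uniform pointwise bound $\|\omega\|_\infty \ll 1$, with implicit constant depending only on $M_0$. Once this is in hand, the conclusion is immediate: for any $k$-submanifold $S$ of Riemannian volume $v$,
\[
\Bigl| \int_S \omega \Bigr| \;\leq\; \int_S |\omega|\, d\mathrm{vol}_S \;\leq\; \|\omega\|_\infty \cdot v \;\ll\; v,
\]
since the restriction $\omega|_S$ is controlled in comass by the pointwise norm of $\omega$ on $M$.

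To establish the pointwise bound, I would exploit that $M \to M_0$ is a Riemannian covering. In particular the injectivity radius of $M$ is at least the injectivity radius $r_0$ of $M_0$, and every geodesic ball $B(x,r_0) \subset M$ is isometric to a ball of radius $r_0$ in the universal cover of $M_0$. Thus all local geometric invariants (curvature, volume of small balls, Sobolev and Poincar\'e constants) are uniform across the tower. Harmonicity of $\omega$ together with the Bochner--Weitzenb\"ock identity implies that $|\omega|^2$ satisfies a Schr\"odinger-type differential inequality with potential bounded in terms of the curvature of $M_0$ alone. Classical subsolution estimates (mean value inequality, or equivalently Moser iteration) applied on a ball of radius $r_0/2$ then yield
\[
|\omega(x)|^2 \;\ll\; \frac{1}{\vol B(x, r_0/2)} \int_{B(x, r_0/2)} |\omega|^2 \;\ll\; \|\omega\|_{L^2(M)}^2 \;=\; 1,
\]
with implicit constants depending only on $M_0$. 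Taking the supremum over $x$ gives the desired $L^\infty$ bound.

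The only (mild) point to check is that these constants depend solely on $M_0$, which is precisely the content of the covering hypothesis: the local model for $M$ is rigidly prescribed by $M_0$, so purely local elliptic estimates transfer uniformly. There is no genuine obstacle beyond correctly invoking the standard elliptic/Sobolev package in the bounded-geometry setting. With $\|\omega\|_\infty \ll 1$ established, the first display completes the proof.
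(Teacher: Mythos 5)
Your proof is correct and is essentially the paper's argument: the paper simply asserts the lemma as "a consequence of Sobolev estimates," and your derivation of the uniform bound $\|\omega\|_\infty \ll \|\omega\|_{L^2}$ via Bochner plus the mean value inequality on balls of radius $\mathrm{inj}(M_0)/2$, with constants depending only on $M_0$ by the covering hypothesis, is precisely the intended content. The final step $|\int_S \omega| \le \|\omega\|_\infty \cdot v$ via the comass bound is also as intended.
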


We now explain how to prove Proposition \ref{P} using Lemma \ref{A}.

\subsection{} Fix $M_0$ and let $\Gamma_0$ be the fundamental group of $M_0$,
let $S$ be a set of generators of $\Gamma_0$ and let $d_0$ be the cardinality of $S$.

To any finite covering $M \to M_0$ --- corresponding to a finite index subgroup $\Gamma < \Gamma_0$ --- we associate the Schreier graph 
$\mathcal{G} (\Gamma_0 / \Gamma , S)$; it is a finite cover of degree $[\Gamma_0 : \Gamma]$ of the wedge
product of $d_0$ circles. Computing the Euler characteristic we conclude that $\mathcal{G} (\Gamma_0 / \Gamma , S)$ 
has the homotopy type of the wedge product of $d$ circles where:
$$(d -1) = [\Gamma_0 : \Gamma ] (d_0-1).$$
The group $\Gamma$ is therefore generated by at most $d$ elements; moreover each of these
elements has length at most $[\Gamma_0 : \Gamma ]$ in the $S$-word metric of $\Gamma_0$. 

Since $\Gamma_0$ with the $S$-word metric is quasi-isometric to the universal cover $\widetilde{M}$ of
$M$ with its induced Riemannian metric we have the following:

\begin{lem} \label{B}
There exists a constant $c=c(M_0)$ such that $\Gamma$ is generated by at most $c[\Gamma_0 : \Gamma]$ elements which can be represented by closed geodesics of length $\leq c[\Gamma_0 : \Gamma ]$. 
\end{lem}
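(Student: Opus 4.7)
The plan is to combine three ingredients: the word-length bound on generators of $\Gamma$ already established in the preceding paragraph; the \v{S}varc--Milnor quasi-isometry between $(\Gamma_0, d_S)$ and the universal cover $\widetilde{M}$; and the fact that in a Hadamard manifold every nontrivial element of a freely, cocompactly-acting group is hyperbolic, hence realizes its translation length along an axis that descends to a closed geodesic in the quotient.

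More concretely, fix a basepoint $\tilde{x}_0 \in \widetilde{M}$. Since $\Gamma_0$ acts freely, properly, cocompactly and isometrically on $\widetilde{M}$, the orbit map $g \mapsto g\tilde{x}_0$ is a quasi-isometry by the \v{S}varc--Milnor lemma: there exist constants $A, B$ depending only on $M_0$ with $d(\tilde{x}_0, \gamma\tilde{x}_0) \leq A|\gamma|_S + B$ for every $\gamma \in \Gamma_0$. Applying this to the generators of $\Gamma$ produced in the preceding paragraph, which satisfy $|\gamma|_S \leq [\Gamma_0:\Gamma]$, gives $d(\tilde{x}_0, \gamma\tilde{x}_0) \ll [\Gamma_0:\Gamma]$ with implicit constant depending only on $M_0$.

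To convert this displacement bound into a closed geodesic, note that $\widetilde{M}$ is Hadamard (complete, simply connected, nonpositively curved) and that $\Gamma_0$ acts on it freely, properly and cocompactly. By the standard semisimplicity theorem for cocompact CAT(0) group actions (e.g., Bridson--Haefliger, Proposition II.6.10), every element of $\Gamma_0$ is semisimple; freeness rules out elliptic elements, so every nontrivial $\gamma$ admits an invariant geodesic axis along which it translates by $\ell(\gamma) = \inf_x d(x, \gamma x)$. The image of this axis in $M = \widetilde{M}/\Gamma$ is a closed geodesic freely homotopic to $\gamma$ whose length is $\ell(\gamma) \leq d(\tilde{x}_0, \gamma\tilde{x}_0) \ll [\Gamma_0:\Gamma]$. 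Taking $c=c(M_0)$ to be the larger of the constants controlling the number of generators (Lemma~\ref{B}'s first claim) and their geodesic length (just established) completes the proof.

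The only genuinely nonformal input is the existence of an axis for every nontrivial deck transformation; in strict negative curvature this would be automatic, but under mere pinched nonpositive curvature one must invoke cocompactness to rule out parabolic behavior. This is the main --- and rather mild --- obstacle in what is otherwise a routine assembly of geometric-group-theoretic facts.
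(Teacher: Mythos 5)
Your proposal is correct and follows essentially the same route as the paper: the paper deduces the lemma directly from the Schreier-graph bound on the number and word length of generators together with the \v{S}varc--Milnor quasi-isometry between $(\Gamma_0,d_S)$ and $\widetilde{M}$, leaving the passage from displacement bounds to closed geodesics implicit. Your explicit appeal to semisimplicity of cocompact isometric actions on the Hadamard cover (to obtain an axis, hence a closed geodesic of length equal to the translation length) simply fills in the step the paper does not spell out.
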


Note that up to a constant (depending only on $M_0$) $\mathrm{vol} (M)$ equals $[\Gamma_0 : \Gamma ]$. Hadamard's inequality, Lemma \ref{A} and Lemma \ref{B} therefore imply Proposition \ref{P}.
(Note that, in the definition \eqref{Rjdef} of the regulator, replacing the $\gamma_j$ by elements $\gamma_j'$ that generate a finite index sublattice
of homology only increases the regulator.) 

\subsection{} Assuming Conjecture \ref{conj} we can apply a similar scheme to bound $R_2 (M)$, but we now need to compare two different norms on 
$H_2(M, \R)$. This is the purpose of the next section.

\section{Relationship
 of the harmonic norm and  the Gromov--Thurston norm} \label{GHT}
 
In this section, $M$ will denote a compact hyperbolic $3$-manifold.
The second homology group $H_2(M, \R)$  is equipped with
 two natural norms:  the Gromov--Thurston norm, which measures the number of simplices
needed to present a cycle, and the harmonic norm, which arises from the identification of $H_2(M, \R) \simeq H^1(M, \R)$ with harmonic $1$-forms on $M$.   
We will relate the two norms and use it to prove \eqref{R2bound}, used in the proof of Theorem \ref{torsiontheorem}.

 More precisely: if $\delta \in H_2(M, \R)$ we set
$$ \|\delta \|_{GT} = \inf \left\{ \sum |n_k | \; \big| \; [\sum n_k \sigma_k ] = \delta \mbox{ where } \sum n_k \sigma_k \mbox{ is a singular chain} \right\}.$$
Note that Gabai \cite[Corollary 6.18] {Gabai} shows that if $\delta \in H_2 (M, \Z)$ then
\begin{multline} \label{Gabai}
\|\delta \|_{GT} \\ = 2 \min \left\{ \sum_{i , \ \chi (S_i ) <0} |\chi (S_i )| \; \Big| \; \begin{array}{l} S = \cup_i S_i, \mbox{ where } S_i \mbox{ is a properly embedded}  \\
\mbox{connected surface in } M
\mbox{ and } [S] = \delta \mbox{ in } H_2(M, \Z) \end{array}\right\}.
\end{multline}

  Note that, since $M$ is compact hyperbolic, we may suppose that each $S_i$ actually a  surface of genus $\geq 2$,
 since if $S$ is either a sphere or a torus the image of $H_2(S, \Z)$ in $H_2(M, \Z)$ will be trivial. In particular, to prove
 the theorem, it is enough to exhibit a set in $H_2(M, \Z)$ of full rank, and with polynomially bounded Gromov-Thurston norm. 
   
We also define $ \|\delta \|_{L^2} =  \|\omega \|_{L^2}$ where $\omega$ is the $L^2$ harmonic $1$-form on $M$ which is dual to $\delta$, i.e.
$$\int_M \omega \wedge \alpha = \int_{\delta} \alpha,$$
for every closed $2$-form $\alpha$ on $M$. Note in particular that 
\begin{equation}
 \|\delta \|_{L^2}^2 = \left| \int_{\delta} * \omega \right|.
\end{equation}
In this section we compare $\|\cdot\|_{L^2}$ and $\|\cdot\|_{GT}$.
In particular, we prove the following:

\begin{prop} \label{P1}
If $M$ varies through a sequence of finite coverings of a fixed manifold $M_0$, 
we have:
 $$  \frac{1}{\vol (M)} \|\cdot\|_{GT} \ll  \|\cdot\|_{L^2} \ll \|\cdot\|_{GT}.$$
\end{prop}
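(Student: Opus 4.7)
The plan is to prove both inequalities via the uniform Sobolev bound
$$\|\omega\|_\infty \ll \|\omega\|_{L^2}$$
for $L^2$-harmonic $1$-forms $\omega$ on $M$. Since $M \to M_0$ is a Riemannian covering we have $\mathrm{injrad}(M) \geq \mathrm{injrad}(M_0) > 0$, so the usual mean-value inequality for harmonic forms applies with a constant depending only on $M_0$.

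\emph{Upper bound $\|\delta\|_{L^2} \ll \|\delta\|_{GT}$.} Given any singular $2$-chain $\sum_k n_k \sigma_k$ representing $\delta$, I would straighten each $\sigma_k$ to a geodesic $2$-simplex; since $M$ is non-positively curved this preserves the homology class and the sum $\sum_k |n_k|$. A geodesic $2$-simplex in $\H^3$ lies in a totally geodesic plane and by Gauss--Bonnet has area at most $\pi$. Using the identity $\|\delta\|_{L^2}^2 = |\int_\delta *\omega|$ and the pointwise identity $|{*\omega}| = |\omega|$, one obtains
$$\|\delta\|_{L^2}^2 \;\leq\; \pi\,\|\omega\|_\infty \sum_k |n_k| \;\ll\; \|\omega\|_{L^2} \sum_k |n_k|,$$
and taking the infimum over chain representatives yields the inequality.

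\emph{Lower bound $\|\delta\|_{GT} \ll \vol(M)\,\|\delta\|_{L^2}$.} By homogeneity and density it suffices to treat $\delta \in H_2(M, \Z)$. Then $[\omega] = \mathrm{PD}(\delta) \in H^1(M, \Z)$ has integral periods, so $\omega = df$ for some smooth map $f \colon M \to \R/\Z$, and for regular $t$ the level set $S_t := f^{-1}(t)$ is a smoothly embedded closed surface representing $\delta$. The coarea formula gives
$$\int_{\R/\Z} \mathrm{area}(S_t)\,dt \;=\; \int_M |\omega|\,dV_M \;\leq\; \|\omega\|_\infty \vol(M) \;\ll\; \|\omega\|_{L^2}\,\vol(M),$$
so some regular value $t_0$ satisfies $\mathrm{area}(S_{t_0}) \ll \vol(M) \|\delta\|_{L^2}$. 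Finally, one realizes $S_{t_0}$ as a singular $2$-chain of $\ell^1$-norm $\ll \mathrm{area}(S_{t_0})$: cover $M$ by balls of fixed radius and bounded multiplicity (using the uniform local geometry inherited from $M_0$) and triangulate $S_{t_0}$ piecewise in each ball, glueing to a closed singular cycle.

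\emph{Main obstacle.} The most delicate step is this final triangulation: $S_{t_0}$ may meet a single ball in many components of very small area, so controlling the total simplex count linearly in $\mathrm{area}(S_{t_0})$ is not automatic and requires genuine use of the uniform bounded geometry of covers of $M_0$ --- one needs that components below a fixed area threshold contribute $O(1)$ simplices each (and there can be at most $O(\vol(M))$ such components), while components above the threshold contribute a simplex count proportional to their area.
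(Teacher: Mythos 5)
Your first inequality is essentially the paper's argument (straight simplices, area $\le \pi$ by Gauss--Bonnet, and the Sobolev bound $\|\alpha\|_\infty \ll \|\alpha\|_{L^2}$ valid uniformly over covers of $M_0$), so that part is fine. The second inequality is where you diverge from the paper, and it is also where your argument has a genuine gap, precisely at the step you flag. The problem is that the Gromov--Thurston norm is not controlled by Riemannian area alone: a closed orientable component of $S_{t_0}$ of genus $g$ requires at least on the order of $|\chi|=2g-2$ singular simplices to represent its fundamental class (this is Gromov's computation of the simplicial volume of surfaces, and it is also visible in Gabai's formula \eqref{Gabai}), yet nothing prevents a level set of the harmonic circle-valued map $f$ from having a component of very small area and very large genus --- the topology of $f^{-1}(t)$ is governed by the critical points of $f$ (the zeros of $\omega$), whose number is not controlled by $\|\omega\|_{L^2}$ or by $\vol(M)$. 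So the assertion that ``components below a fixed area threshold contribute $O(1)$ simplices each'' is false as stated, and the coarea bound on $\mathrm{area}(S_{t_0})$ does not by itself yield a bound on $\|\delta\|_{GT}$.

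The step can be repaired, but it needs a real tool rather than a ball-by-ball triangulation of $S_{t_0}$: the Federer--Fleming deformation theorem lets you push the integral cycle $S_{t_0}$ onto the $2$-skeleton of a triangulation lifted from $M_0$, producing a homologous cellular cycle whose $\ell^1$-norm is $\ll \mathrm{area}(S_{t_0})$ with a constant depending only on the local geometry of the triangulation (hence only on $M_0$); this sidesteps the genus issue because one no longer triangulates $S_{t_0}$ itself. The paper avoids the whole difficulty by never passing through level sets: it fixes a triangulation $K$ lifted from $K_0$, forms the dual cellular $2$-cycle $Z=\sum_e\left(\int_e\omega\right)e^*$ in $C_2(K',\R)$, observes that $Z$ is closed by Stokes and represents the Poincar\'e dual of $[\omega]$, and then bounds each coefficient by $\|\omega\|_\infty\ll\|\omega\|_{L^2}$ while the number of edges is $\ll\vol(M)$. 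That argument is shorter, works directly for real (not just integral) classes, and gives the stated $\vol(M)$ factor immediately; your coarea route, once repaired via deformation, would actually give the slightly stronger bound $\|\delta\|_{GT}\ll\vol(M)^{1/2}\|\delta\|_{L^2}$ by using Cauchy--Schwarz instead of the sup-norm bound in $\int_M|\omega|$, but as written the proof is incomplete.
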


\begin{proof}
The proof occupies \S \ref{proofstart}--\S\ref{proofend} below.
 
\subsection{}  \label{proofstart}
 Given a cycle $\delta \in H_2(M, \R)$ with $\|\delta\|_{GT} \leq 1$, 
we may write (see e.g. \cite[Theorem 11.4.2 and the Remark following it]{Rattcliffe}) 
 $$\delta = \sum_k n_k \sigma_k$$ where each $\sigma_k$ is a {\it straight} simplex (or triangle), i.e. the image of the convex hull of $3$ points 
 in $\H^3$, and $\sum_k |n_k | \leq 1$. 
Now if $\alpha$ is a harmonic $2$-form, then 
$$\int_{\sigma_k} \alpha \ll ||\alpha ||_{\infty} \mathrm{area} (\sigma_k ) \leq \pi || \alpha ||_{\infty} \ll  || \alpha ||_{2}$$
is uniformly bounded so that $\int_{\delta} \alpha \ll   \|\alpha\|_2$.
Since we can compute the harmonic  norm of $\delta$ as the operator norm of $\alpha \mapsto \int_{\delta} \alpha$, 
this has shown the second inequality of Proposition \ref{P1};
we pass now to the first inequality.

\subsection{} 
In the reverse direction,  suppose given an element $\delta \in H_2(M, \R)$
of harmonic norm $\leq 1$; equivalently, its image under
$H_2(M, \R) \simeq H^1(M, \R)$
is represented by a harmonic $1$-form $\omega$ of $L^2$-norm $\leq 1$. 

 Fix a triangulation $K$ of $M$
by lifting a triangulation $K_0$ of $M_0$. We can suppose
that  every edge has length $\leq 1$ and every triangle has area $\leq 1$. 
Let $K'$ be the dual cell subdivision. We denote by 
$$\langle , \rangle : C_i (K , \Z) \times C_{3-i} (K' , \Z) \to \Z$$
the (integer) intersection number; it canonically identifies $C_{3-i} (K' , \Z)$ with the dual $C^i (K , \Z) = C_i (K , \Z)^*$
of $C_i (K , \Z)$. Furthermore, the boundary homomorphism 
$$\partial : C_{3-i} (K', \Z) \to C_{3-i-1} (K' , \Z)$$
is (up to sign) dual to the corresponding boundary homomorphism $C_{i+1} (K , \Z) \to C_i (K , \Z)$,
in other words $\partial$ identifies (up to sign) with the coboundary homomorphism $C^i (K , \Z) \to C^{i+1} (K, \Z)$. 
Both complexes $C_{\bullet} (K , \Z)$ and $C_{\bullet} (K' , \Z)$ compute $H_{\bullet} (M , \Z)$. Now the latter identifies with $C^{3-\bullet} (K , \Z)$
and computes $H^{3-\bullet} (M , \Z)$. This realizes the Poincar\'e duality.

\subsection{} Consider, then, the two-cycle 
 $$Z:= \sum_{e} \left( \int_e \omega \right) e^* \in C^1 (K , \R) = C_2 (K' , \R).$$
Since $\omega$ is closed, it follows from Stokes formula that
$$ \partial Z = \pm \sum_{t}  \left( \int_{\partial t} \omega \right) t^* = 0.$$
On the other hand, $Z$ represents the image of the class of $[\omega]$ under 
the Poincar{\'e} duality pairing
$$H^1(M, \R) \stackrel{\sim}{\rightarrow} H_{2}(M, \R).$$
 
\subsection{}  \label{proofend}
Subdivide $K_0'$ to get a triangulation $T_0$ of $M_0$ and lift this triangulation to a triangulation $T$ of $M$. 
There exists a constant $c$ which only depends on $M_0$ (and $T_0$) such that the number of triangles of $T_0$ in each cell of $K_0'$ dual to an edge of $K_0$ is bounded by $c$. Then the number of
triangles of $T$ in each cell of $K'$ is $\leq c [M:M_0]$, where $[M:M_0]$ is the degree of the cover $M\to M_0$. By definition of the Gromov--Thurston norm 
we conclude that 
$$ \| [Z] \|_{GT} \ll ||\omega ||_{\infty} \vol (M) \ll   \vol(M),$$
the last by the Sobolev inequality. 
Proposition \ref{P1} now follows. \end{proof}

\subsection{Relation with $R_2 (M)$}  \label{R2P} Let us now assume Conjecture \ref{conj}. Then each $[S_i]$ has Gromov--Thurston norm --- and therefore, by Proposition \ref{P1}, harmonic norm --- which is bounded by a polynomial in $\vol(M)$. 
Thus here again Hadamard's inequality shows that
$$R_2(M) \ll \vol(M)^{C b(M)},$$
where $b(M)$ is the Betti number.  

We have now concluded the proof of Theorem \ref{torsiontheorem}.

\section{Arithmetic manifolds} 
\label{sec:arithmanifolds} 
Let $F$ be an imaginary quadratic field. We consider arithmetic manifolds associated to an 
algebraic group $\G$ over $\Q$ such that $\G(\R)  = \PGL_2(\C)$.  In this paper we are interested in the two examples: 

\begin{itemize}
\item  $\G_1 = \mathrm{Res}_{F/\Q} \GL_2$ mod center;
\item $\G_2 = \mathrm{Res}_{F/\Q} \GL_1(D')$ mod center, 
where $D'$ is a  division algebra over $F$ of the form $D'=D \otimes F$, with $D$  a quaternion algebra over $\Q$. 
\end{itemize}

Let $\A$ and $\A_F$ be the ring of ad\`eles of $\Q$ and $F$ respectively. We denote by $\A_{f}$ and $\A_{F,f}$ the corresponding rings of finite ad\`eles. 
We also write $F_{\infty} = F \otimes \R \simeq \C$. 
In remaining part of this paper $\G$ stands for either $\G_1$ or $\G_2$.  

In the second case $\G_2$ admits a $\Q$-subgroup which will be of importance to us: Let $\HH = \GL_1(D)$ modulo center,
considered as a subgroup of $\G_2$.  Thus, $\HH(\R) = \PGL_2(\R)$. 

\subsection{The arithmetic manifold $X(\mathfrak{n})$} \label{arithX}
Let $\mathfrak{n}$ be an ideal of the ring of integers $\mathcal{O}$ of $F$. We associate to $\mathfrak{n}$ a compact open subgroup 
$$K(\mathfrak{n}) = \prod_v K_v (\mathfrak{n})  \subset \G(\A_f)$$
in the following way. If $\G = \G_1$ as usual we define $K (\mathfrak{n})=K_1 (\mathfrak{n})$ as the subgroup corresponding --- after restriction of scalars and mod center --- to 
\begin{equation} \label{Kn} \{ g \in \GL_2 (\widehat{\mathcal{O}}) \; : \; g \equiv I_2 \ ( \mathfrak{n} \widehat{\mathcal{O}} ) \}.\end{equation} 
Here $\widehat{\mathcal{O}}$ is the closure of $\mathcal{O}$ in $\A_{F,f}$. In this case,  we
also define $K_0(\mathfrak{n})$ in the usual way
$$K_0(\mathfrak{n}) = \{ g \in \GL_2 (\widehat{\mathcal{O}}) \; : \; g \equiv  \left(\begin{array} {cc} * & * \\ 0 & * \end{array} \right)  \ ( \mathfrak{n} \widehat{\mathcal{O}} ) \}.$$

Now if $\G = \G_2$ we make a corresponding definition
of $K(\mathfrak{n})$ as follows: 
Regard $\G_2(\adele_f)$
as the $\adele_{F,f}$-poitns of $\GL_1(D')$ mod center. In  the paragraph that follows, 
products over places $v$ will be over places of $F$. 
First make an arbitrary choice $K = \prod_v K_v \subset \G_2(\adele_f)$ of a compact open subgroup such that $K_v$ is hyperspecial at each unramified place $v$.
At those places we may then fix isomorphisms $\phi_v : \G_2 (\Q_v) \to \PGL_2(F_v)$
such that $\phi_v(K_v) = \PGL_2(\OO_v)$, where $\OO_v \subset F_v$ is the ring of integers. We finally define 
$K(\mathfrak{n}) = \prod_v K_v (\mathfrak{n})$ by setting $K_v (\mathfrak{n}) = K_v$ at each ramified place and 
$K_v (\mathfrak{n}) = \phi_v^{-1} (K_{1,v} (\mathfrak{n}))$ at each unramified place, where $K_{1,v}(\mathfrak{n})$ is 
the local analog of \eqref{Kn}.  We will also suppose, for at least one ramified place $v$, 
the subgroup $K_v$ is sufficiently small so as to force any group $\G(\Q) \cap K(\mathfrak{n})$ to be torsion-free. 

Given any compact open subgroup $K \subset \G (\A_f)$, we define the  arithmetic manifold  $X(K)$
by 
$$X(K)  = \G(F) \backslash (\H^3 \times \G(\A_f)) / K .$$
We simply denote by $X(\mathfrak{n})$ the arithmetic manifold $X(K(\mathfrak{n}))$.

\subsection{Connected components of $X(\mathfrak{n})$.}

The connected components of $X(\mathfrak{n})$ can be described as follows. Write 
$\G (\A_f ) = \sqcup_j \G (\Q) g_j K(\mathfrak{n})$; then 
$$X(\mathfrak{n}) = \sqcup_j \Gamma_{j} \backslash \H^3,$$
where $\Gamma_j$ is the image in $\PGL_2 (\C)$ of 
$\Gamma_j ' = \G(\Q)\cap g_j K(\mathfrak{n}) g_j^{-1}.$
We let 
$$Y(\mathfrak{n}) = \Gamma \backslash \H^3$$
denote the connected component of $X(\mathfrak{n})$ associated to the class $g_j =e$ of the identity element
so that $\Gamma$ is the image in $\PGL_2 (\C)$ of 
$\G (\Q) \cap K(\mathfrak{n})$.

Note that $X(K(\mathfrak{n}))$ is the quotient of
$$\G(\Q) \backslash (\PGL_2 (\C) \times \G(\A_f)) / K(\mathfrak{n}) ,$$
by 
$$K_{\infty} = \mbox{ image in $\PGL_2(\C)$ of } \left\{ \left( \begin{array}{cc} a & b \\ -\bar{b} & \bar{a} \end{array} \right) : |a|^2+|b|^2=1 \right\}$$ 
the maximal compact subgroup at infinity. Here we have chosen an identification of $\G(\C)$ with $\PGL_2(\C)$; 
in case (ii), we require that this identification carry $(D')^{\times}$
into $\PGL_2(\R)$, so that in particular $K_{\infty}$ intersects  $\HH(\R)$ 
in a maximal compact subgroup. 

In the $\G_2$ case both $Y(\mathfrak{n})$ and $X(\mathfrak{n})$ are compact manifolds. In the $\G_1$ case both are noncompact of finite volume.

\subsection{Hecke operators}  \label{sec:Hecke} Suppose $g \in \G (\A_f)$,  that $K'$ is another compact open subgroup of $\G (\A_f)$ and $K' \subset g K(\mathfrak{n}) g^{-1}$. 
The map $\G( \A) \to \G (\A)$ given by $ h \mapsto hg$ defines a continuous mapping
$$r(g) : X(K') \to X(\mathfrak{n}).$$
Taking $K'= K(\mathfrak{n}) \cap gK(\mathfrak{n}) g^{-1}$ we define the {\it Hecke operator} 
$\TT(g) : H^{\bullet} (X(\mathfrak{n})) \to H^{\bullet} (X(\mathfrak{n}))$ to be the composition 
$$H^{\bullet} (X(\mathfrak{n})) \stackrel{r(g)^*}{\to} H^{\bullet} (X(K')) \stackrel{r(1)_*}{\to} H^{\bullet} (X(\mathfrak{n})).$$

For suitable choice of $g$ this gives rise to the usual Hecke operators $T_{\mathfrak{m}}$,
which are attached to any ideal $\mathfrak{m}$ of $F$ 
which is ``relatively prime to ramification,'' i.e. no prime divisor of $\mathfrak{m}$
lies above any place $v$ of $\Q$ where $K_v$ is non-maximal.

\subsection{The truncation in the Bianchi case} 
 Now assume that $\G = \G_1$, so that we are in the noncompact case. 
 We denote by $X(\mathfrak{n})_{\trunc}$ a `truncation' of $X(\mathfrak{n})$, where
 we ``chop off the cusps.'' Thus $X(\mathfrak{n})_{\trunc}$ is a manifold with boundary, and 
 up to homeomorphism it does not depend on the height at which the cusps were cut off. 
 
Connected components of 
$Y(\mathfrak{n})_{\trunc}$ are homeomorphic to the compact quotient $\Gamma \backslash \H^3_* $ 
where $$\H^3_* := \H^3 \ \backslash \bigcup_{\sigma \in \mathbb{P}^1(F)} B(\sigma)$$
where the $B(\sigma)$ are a disjoint collection of horospheres in $\H^3$ tangent to the rational
boundary point $\sigma$. In particular, $\Gamma \backslash \H^3_* $ looks like 
a thickening of the 2-skeleton of $\Gamma \backslash \H^3$.   
\section{Complexity of base-change cohomology classes} \label{BaseChange}

In this section $\G = \G_2$ and $M=Y(\mathfrak{n})$ is an associated congruence arithmetic manifold.
We address Conjecture \ref{conj} for base-change cohomology classes of $M$. 
We recall below the definition of the 
base-change part $H^2_{\rm bc} (M)$ of the cohomology $H^2 (M)$.  Note that
in this section, when we write $H^*(M)$ etc. without coefficients, we always mean complex cohomology.
Here we prove: 

\begin{thm} \label{Tbc}
There is a constant $C=C(F)$ such that, for any arithmetic hyperbolic manifold $M=Y(\mathfrak{n})$ of volume $V$, there exist compact immersed surfaces
$S_i$ of genus $\leq V^C$ such that the  homology classes  $[S_i] \in H_{2} (M)$ span $H_2^{{\rm bc}} (M)$.
\end{thm}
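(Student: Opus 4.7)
The plan is to realize the classes in $H_2^{\rm bc}(M)$ as Hecke translates of a distinguished ``modular curve'' cycle coming from the $\Q$-subgroup $\HH \subset \G_2$. Since $\HH(\R)=\PGL_2(\R)$ stabilizes an embedded $\H^2 \subset \H^3$, for each $g\in \G_2(\adele_f)$ the $\HH$-orbit through $g$ gives an immersed oriented closed surface $S_g \subset M = Y(\mathfrak{n})$ representing a class in $H_2(M)$. The goal is to show that, as $g$ ranges over a polynomial-size family of Hecke translates, the $[S_g]$ both have polynomially bounded genus \emph{and} span $H_2^{\rm bc}(M)$.

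First I would bound the genus of $S_g$ by Gauss--Bonnet: the hyperbolic area of $S_g$ equals a scalar multiple of the covolume of the relevant arithmetic lattice in $\PGL_2(\R)$, and when $g$ is the Hecke translate attached to an ideal $\mathfrak{m}\subset\OO$ of norm $\le V^A$ this covolume is $\ll V^{C_1}$, so that $\mathrm{genus}(S_g) \le V^{C_2}$. Second, I would reduce the spanning statement to a dual non-vanishing statement: it suffices to show that the pairing map $H^2_{\rm bc}(M) \to \bigoplus_g \C$, $[\alpha]\mapsto \bigl(\int_{S_g}\alpha\bigr)_g$, is injective when $g$ runs over the chosen polynomial-size family. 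Decomposing $H^2_{\rm bc}$ into Hecke eigenspaces and invoking the theorem of Harder--Langlands--Rapoport (following Jacquet) --- which guarantees that every base-change automorphic form has \emph{some} non-vanishing $\HH$-period --- reduces the problem to showing that for each eigenclass $\alpha$ there is a Hecke translate $T_\mathfrak{m}\alpha$ with $\Norm(\mathfrak{m})\le V^A$ whose period over the basic cycle $S_0$ is non-zero.

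The main obstacle is quantitative, and is precisely the input flagged in the introduction. Globally one needs a polynomially effective form of strong multiplicity one: the Hecke operators $T_\mathfrak{m}$ with $\Norm(\mathfrak{m})\le V^A$ must already separate the distinct automorphic representations contributing to $H^*_{\rm bc}(M)$, so that the existence of \emph{some} non-vanishing period can be upgraded to non-vanishing on a Hecke orbit of bounded complexity. Locally, at the finitely many supercuspidal ramified places, the local $\HH(\Q_v)$-period can vanish for purely representation-theoretic reasons; the hard local input is a uniform bound on the support of matrix coefficients of supercuspidal representations, which allows one to absorb these local obstructions by an additional polynomial Hecke adjustment. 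Combining the polynomial genus bound with the global polynomial Hecke separation and the polynomial local correction then yields the desired $V^C$ bound, producing a spanning family of cycles of the asserted genus.
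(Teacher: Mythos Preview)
Your high-level strategy matches the paper's: cycles from $\HH$, genus via covolume, quantitative multiplicity one globally, and matrix-coefficient control locally. However, there is a genuine gap in your reduction step. You write that it suffices, for each Hecke eigenclass $\alpha$, to find a Hecke translate $T_\mathfrak{m}\alpha$ with nonzero period over the basic cycle $S_0$. This cannot work: since $K=K(\mathfrak{n})$ is a principal congruence subgroup, the space $\sigma_f^K$ of $K$-fixed vectors in a single automorphic representation $\sigma$ typically has dimension $s>1$, and the unramified Hecke operators $T_\mathfrak{m}$ act on it by \emph{scalars}. Hence $\int_{S_0}T_\mathfrak{m}\alpha=\lambda_\mathfrak{m}(\sigma)\int_{S_0}\alpha$, and the family $\{T_\mathfrak{m}[S_0]\}_\mathfrak{m}$ contributes at most rank $1$ to each Hecke eigenspace, never rank $s$.

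The paper resolves this with a genuinely two-parameter family $\TT_{\mathfrak{p}}[Z(L)]_{g,\mu}$. The unramified Hecke operators $\TT_\mathfrak{p}$ (of polynomially bounded norm) are used only to \emph{separate distinct} automorphic representations, via Brumley's quantitative multiplicity one (\S\ref{step1}). Independently, the element $g$ is varied inside $\G(\Q_R)$ at the ramified places so as to make the $s\times s$ matrix of local periods $(P_R(g_k\cdot\varphi_{j,R}))_{j,k}$ nonsingular (\S\ref{step3}). It is \emph{this} step, not a ``Hecke adjustment,'' for which the matrix-coefficient asymptotics are needed: via the wavefront lemma and the Finis--Lapid--M\"uller support bounds, one shows that the functions $g\mapsto P_p(g^{-1}v_j)$ on $G_p/H_p$ are already linearly independent on a set $\{g:\|g^{-1}\|\le cV_p^d\}$ of polynomial size. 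So the local input is not ``the period can vanish and one fixes it by Hecke''; it is effectively spanning a multi-dimensional $K$-fixed space by $\G(\Q_R)$-translates of the cycle, an operation orthogonal to the unramified Hecke action. You also omit the quadratic character twist $\chi$ (the $\mu$ in $[Z(L)]_{g,\mu}$): a base-change $\sigma$ is only guaranteed to be $\chi$-distinguished for \emph{some} $\chi$, and at infinity it is in fact the nontrivial sign character that occurs, so the untwisted $\HH$-period on the basic cycle may well vanish identically.
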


Note that \S \ref{Haluk}  gives evidence that `often' we actually have $H^2 (M) =H^2_{\rm bc}(M)$.

It is enough to prove this theorem for the non-connected $X(\mathfrak{n})$ rather than for $M=Y(\mathfrak{n})$.
Since $X(\mathfrak{n})$ is compact, we can compute $H^2 (X(\mathfrak{n}))$ by means of $L^2$-cohomology, and indeed
there is a Hecke-equivariant isomorphism (Matsushima's theorem, see \cite{BW}):
$$H^2 (X(\mathfrak{n})) = H^2 (\g , K_{\infty} ; L^2 ([\G]))^K$$
where $L^2 ([\G])$ is the Hilbert space of measurable functions $f$ on $\G(\Q) \backslash \G (\A)$ such that $|f|$ is square-integrable on $\G(\Q)   \backslash \G(\A)$ and
we abridge by $K=\prod_v K_v$ the compact open subgroup $K(\mathfrak{n})$. 

Since $\G$ is anisotropic the space $L^2 ([\G])$ decomposes as a direct sum of irreducible unitary representations of $\G(\A)$ with finite multiplicities (in fact equal to $1$). A representation $\sigma$ which occurs in this way is called an {\it automorphic representation} of $\G$; it is factorizable as a restricted tensor product of   admissible representations of $\G(\Q_p)$ (or more precisely, 
the unitary completions of these admissible representations).  In particular, 
$$\sigma = \sigma_{\infty} \otimes \sigma_f$$ where $\sigma_{\infty}$ is a unitary representation of $\G(\R)$ and $\sigma_f$ is a representation of $\G(\A_f)$.  
In the following we let $\mathcal{A}$ be the set of  all irreducible automorphic representations $(\sigma , V_{\sigma})$ of $\G (\A)$, realized on the subspace $V_{\sigma} \subset L^2 ([\G])$. 

\subsection{Representations with cohomology} \label{CR} Let $\g = \mathfrak{sl}_2 (\C)$ be the Lie algebra of the real Lie group $\G (\R)=\PGL_2 (\C)$. There exists a unique non-trivial irreducible $(\g, K_{\infty})$-module $(\pi , V_{\pi})$ such that $H^{\bullet} (\g , K_{\infty} ; V_{\pi}) \neq \{ 0 \}$. Furthermore:
$$H^q (\g , K_{\infty} ; V_{\pi} ) = \left\{ 
\begin{array}{ll}
0 & \mbox{ if } q\neq 1, \ 2 , \\
\C & \mbox{ if } q= 1, \ 2.
\end{array} \right. $$
If we let $\p = \mathfrak{sl}_2 (\C)/ \mathfrak{su}_2 $, the compact group $K_{\infty}$ acts by conjugation on $\wedge^q \p$; this yields an irreducible representation of
$K_{\infty}$.  There is a natural isomorphism
$$H^q (\g , K_{\infty} ; V_{\pi} ) \simeq \mathrm{Hom}_{K_{\infty}} (\wedge^q \p , V_{\pi}).$$

We denote by $\mathcal{C}$ the subset of $\mathcal{A}$ which consists of automorphic representation $\sigma = \sigma_{\infty} \otimes \sigma_f$ of $\G (\A)$ such that $\sigma_{\infty} \cong \pi$
(where, by a slight use of notation, we use $\pi$ also to denote the unitary completion of the $(\mathfrak{g}, K_{\infty})$-module described above). 

\subsection{} Let $\mathcal{H}_K$ be the Hecke algebra of finite $\Q$-linear combinations of $K$-double cosets in $\G (\A_f)$.  It is generated by the Hecke operators
$\TT (g) := K g K$. If $\sigma$ is a representation of 
$\G (\A )$, then $\mathcal{H}_K$ acts on the space of $K$-fixed vectors of $\sigma$.  On the other hand,  \S \ref{sec:Hecke} gives an action of $\mathcal{H}_K$ on $H^*(X(K))$. 

To summarize the prior discussion, then, we have a $\mathcal{H}_K$-isomorphism:
\begin{equation} \label{cow} H^q (X(K)) = \bigoplus_{\sigma \in \mathcal{C}} \mathrm{Hom}_{K_{\infty}} (\wedge^q \p , V_{\sigma}^K).  \end{equation}

\subsection{Base-change classes} \label{BC-definition}
Given an automorphic representation $\sigma$ of $\G (\A)$ we let  $\mathrm{JL} (\sigma)$ be the automorphic representation of $\mathrm{Res}_{F/\Q} (\GL_2 {}_{|F})$ with trivial central character associated to $\sigma$ by the Jacquet-Langlands correspondence. 

We  say that an automorphic representation $\sigma$ of $\G$ {\it comes from base-change} if 
 $\mathrm{JL}(\sigma)$
is isomorphic to $\mathrm{BC}(\sigma_0) \otimes \chi$, where $\sigma_0$ is a cuspidal
automorphic representation of $\GL_2{}_{|\Q}$, $\chi$ is an idele class character of $F$, and $\mathrm{BC}$ denotes base change. 

 We denote by $\mathcal{A}^{\rm bc}$ the set of all such representations $(\sigma , V_{\sigma})$ and define 
$$H^2_{{\rm bc}} (X(K)) \subset H^2 (X(K))$$
as the subspace corresponding, under \eqref{cow},
to those  $\sigma \in \mathcal{C}$ that  actually belong to $\mathcal{A}^{\rm bc}$.

{\em A priori}, this defines only a complex subspace, but it is actually defined over $\Q$, 
as one sees by consideration of Hecke operators. There is then a unique
Hecke-invariant splitting  
 $$H^2(X(K), \Q) = H^2_{{\rm bc}}(X(K), \Q) \oplus H^2_{{\rm else}}(X(K) \Q),$$
 and 
 we define the base-change subspace $H_{2}^{\rm bc}$ of homology
 as the orthogonal complement of $H^2_{{\rm else}}$.  As it turns out, $H_2^{\rm bc}$ is spanned by some special cycles that we now describe.

\subsection{Special cycles}   Let $\HH \subset \G_2$ be as in
\S \ref{sec:arithmanifolds}; recall that  $\HH (\R )   \simeq \PGL_2 (\R)$. Many
notions we have defined for $\G$ make similar sense for $\HH$, we won't recall definitions but just add $H$ as a subscript to avoid confusion.
For example, we write $\mathfrak{p}_H$ for the image inside $\mathfrak{p}$ of the Lie algebra of $\HH(\R)$ (recall
that $\mathfrak{p}$ is defined as a quotient of the Lie algebra of $\G_2(\R)$.)  

Let $L = K \cap \HH (\A_{f})$, the quotient 
$$Z (L) = \HH (\Q) \backslash (\C - \R) \times \HH (\A_f) /  L = \HH (\Q) \backslash \HH (\A ) / L_{\infty}^{\circ} L \quad (L_{\infty} = \left( \HH (\R) \cap K_{\infty}\right)),$$
is a union of (compact) Shimura curves. 
Here $L_{\infty}^{\circ}$ denotes the connected component of $L_{\infty}$, and $L_{\infty}/L_{\infty}^{\circ} \simeq \pm 1$
acts on $Z(L)$.

The inclusion $\HH \hookrightarrow \G$  defines a map $Z(L) \to X(K)$. 
Note that, since we are supposing $K$ is sufficiently small (\S \ref{arithX}) both $Z(L)$ and $X(K)$ are genuine manifolds and not merely orbifolds.
 The submanifold $Z(L)$ defines a class $[Z(L)]$ in $H_2(X(K))$. 

 More generally, for every $g \in \G(\Afinite)$ we
set $L_g = g K g^{-1} \cap \HH(\A_f)$; then right multiplication by $g$ gives a map
$Z(L_g) \rightarrow X(K)$, and by pushing forward the fundamental class 
from any component we obtain  a class in $H_2(X(K))$.
The components of $Z(L_g)$   are indexed by $\adele^{\times}/(\det L_g) (\adele^{\times})^2$,
where $\det$ denotes here the reduced norm. 
Accordingly, if $\mu: \adele^{\times}/(\det L_g) (\adele^{\times})^2 \rightarrow \mathbb{Z}$
is an integer-valued function we denote by $[Z(L)]_{g, \mu}$
the associated class in $H_2(X(K))$; in other words, $[Z(L)]_{g, \mu}$ is the image of
$$ \mu \in H^0(Z(L_g), \Z) \simeq H_2(Z(L_g), \Z) \rightarrow H_2(X(K),\Z),$$
where the first map is Poincar{\'e} duality.

We let $\mathcal{Z}_K$ be the subspace of $H_2 (X(K))$ spanned by  all such $[Z(L)]_{g, \mu}$.  Note that this subspace is spanned by classes of totally geodesic immersed surfaces that we call {\it special cycles}.  

\subsection{} \label{compatibility} 
We will need a precise description of the dual pairing $\langle -  ,  - \rangle : H_2 \times H^2 \to \C$: Choose a Haar measure $dh$ on $\HH (\Q) \backslash \HH (\A)$ and fix a generator  $\nu_H$ of the   line $\left( \wedge^2 \p_H\right)$.

Now let $T   \in \mathrm{Hom}_{K_{\infty}} (\wedge^2 \p , V_{\sigma}^K)$, for some $\sigma \in \mathcal{C}$. 
By \eqref{cow} we can identify $T $ with an element of $H^2(X(K))$.   We compute 
\begin{equation} \label{pairing}
\langle [Z(L)]_{g, \mu} , T \rangle =  c \int_{ \HH (\Q) \backslash \HH (\A ) / L_{\infty} L_g} T(\nu_H ) (hg) \ \mu(\det(h)) dh,
\end{equation}
where $c$ is a nonzero constant of proportionality, depending on $g$, the choice of measure $dh$ and the choice of $\nu_H$. 

\subsection{Distinguished representations}

Let $(\sigma , V_{\sigma}) \in \mathcal{A}$. A function 
$\varphi \in V_{\sigma}$ can then be seen as a function in $L^2 ([\G])$.  Let $\chi$ be a quadratic idele class character of $\adele^{\times}/\Q^{\times}$. We define the period integral
\begin{equation} \label{Pdef} P_{\chi}(\varphi ) = \int_{\HH (\Q)   \backslash \HH (\A )} \varphi (  h)   \chi( \det \ h) dh \end{equation}
where $dh = \otimes_v dh_v$ is a Haar measure on $\HH (\Q) \backslash \HH (\A)$ as above.  Let us  say that $\sigma$ is {\it  $\chi$-distinguished} if $P_{\chi}(\varphi) \neq 0$ for some $\varphi \in \sigma$. .  We say simply 
that $\sigma$ is {\it distinguished} if it is $\chi$-distinguished for some $\chi$.  For the following, see  \cite[Theorem 4.1]{PV}  (see also the discussion above that theorem,   \S 3 of {\em loc. cit.},
and  \cite[Proposition 3.4]{AP06}):

\begin{prop} \label{P10}An automorphic representation $\sigma \in \mathcal{A}$ is   distinguished  
  if and only if  $\sigma$ comes from base change.
\end{prop}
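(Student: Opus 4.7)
The plan is to reduce the period problem on $\G_2$ to the analogous period problem on $\mathrm{Res}_{F/\Q}\GL_2$ via the Jacquet--Langlands correspondence, and then to invoke the classical characterization of $\GL_2(\A_\Q)$-distinguished cuspidal representations of $\GL_2(\A_F)$ going back to Harder--Langlands--Rapoport \cite{HLR}.

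First I would transfer the period functional $P_\chi$ from \eqref{Pdef} across the Jacquet--Langlands correspondence. Concretely, one compares the relative trace formula attached to the pair $(\G_2,\HH)$ with the split relative trace formula attached to $(\mathrm{Res}_{F/\Q}\GL_2,\GL_2/\Q)$, and establishes a matching of relative orbital integrals compatible with the JL transfer at places where $\G_2$ is nonsplit. Granting this matching, one deduces that $\sigma\in\mathcal{A}$ admits a vector $\varphi$ with $P_\chi(\varphi)\neq 0$ if and only if the Jacquet--Langlands lift $\mathrm{JL}(\sigma)$ carries a vector with nonvanishing $\chi$-twisted $\GL_2(\A_\Q)$-period. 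This is precisely the content of the discussion preceding Theorem~4.1 in \cite{PV}.

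Second I would invoke the Harder--Langlands--Rapoport theorem together with its refinements (Jacquet; Flicker--Hakim; \cite{AP06}): a cuspidal automorphic representation $\Pi$ of $\mathrm{Res}_{F/\Q}\GL_2$ has a nonvanishing $\chi$-twisted $\GL_2(\A_\Q)$-period for some quadratic idele class character $\chi$ if and only if $\Pi\simeq \mathrm{BC}(\sigma_0)\otimes\chi'$ for some cuspidal $\sigma_0$ on $\GL_2/\Q$ and some idele class character $\chi'$ of $F$. Composing this equivalence with the first step yields the proposition exactly as stated.

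The main obstacle is the first step: JL-transfer of periods is not formal, since $\G_2$ is an inner form that is non-split at (at least) the archimedean place and at some finite set of primes, and one must arrange a fundamental-lemma-type matching of test functions between the two relative trace formulas. Fortunately, since we only need existence (not a precise formula) of a distinguishing $\chi$, the matching issue reduces to a purely local statement at the ramified places, and this has been carried out in the references above. Accordingly, the proof amounts to citing \cite[Theorem 4.1]{PV} together with \cite[Proposition 3.4]{AP06}.
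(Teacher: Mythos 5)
The paper gives no independent argument for this proposition: it simply cites \cite[Theorem 4.1]{PV} (together with the discussion preceding it and \cite[Proposition 3.4]{AP06}), which is exactly where your proposal lands. Your sketch of the underlying mechanism (transfer of the period across Jacquet--Langlands plus the Harder--Langlands--Rapoport/Flicker-type characterization of $\GL_2(\A)$-distinction for $\GL_2$ over $F$) is a fair account of what those references do, so the proposal is correct and takes essentially the same route as the paper.
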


\medskip

\begin{prop} \label{P2}
Let $\sigma \in \mathcal{C}$ be such that $\sigma_f^K \neq \{ 0 \}$. 
Then $\sigma$ is  not  distinguished if and only if the subspace  
$$\mathrm{Hom}_{K_{\infty}} (\wedge^2 \p , V_{\sigma}^K)  \subset H^2 (X(K))$$
is orthogonal to the subspace $\mathcal{Z}_K$ spanned by the cycles $[Z(L)]_{g, \mu}$ . Equivalently, cycles
$[Z(L)]_{g, \mu}$ span $H_2^{\rm bc}$. 
\end{prop}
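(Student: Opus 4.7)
The plan is to rewrite the pairing $\langle [Z(L)]_{g,\mu}, T\rangle$ from \eqref{pairing} as a sum of period integrals $P_\chi$, and then invoke Proposition \ref{P10} to translate ``distinguished'' into ``base-change.'' First, note that $\mu$ is an integer-valued function on the finite group $\adele^\times/(\det L_g)(\adele^\times)^2$, which is killed by $2$, so it expands as a finite sum $\mu = \sum_\chi c_\chi \chi$ of characters $\chi$, each of which extends to a quadratic idele class character of $\adele^\times/\Q^\times$. Using the identity $T(\nu_H)(hg) = (\sigma(g)T(\nu_H))(h)$, formula \eqref{pairing} becomes
$$\langle [Z(L)]_{g,\mu}, T\rangle \;=\; c \sum_\chi c_\chi \, P_\chi\bigl(\sigma(g)\, T(\nu_H)\bigr),$$
so everything reduces to understanding when the vectors $\sigma(g) T(\nu_H)$ can be pushed out of the kernel of some $P_\chi$.

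If $\sigma$ is not distinguished then $P_\chi \equiv 0$ on $V_\sigma$ for every quadratic $\chi$, so the displayed pairing vanishes for all choices of $g,\mu,T$; this is the easy direction. Conversely, suppose $\sigma$ is distinguished and fix a quadratic $\chi$ with $P_\chi \not\equiv 0$ on $V_\sigma$. By uniqueness (up to scalar) of $(\HH,\chi)$-invariant linear functionals the period factorizes as an Eulerian product $P_\chi = \prod_v P_{\chi,v}$, with $P_{\chi,v}$ a local functional on $V_{\sigma_v}$. Since $\sigma_\infty \cong \pi$ and $\mathrm{Hom}_{K_\infty}(\wedge^2\p, V_{\sigma_\infty})$ is one-dimensional, pick a spanning vector $T_0$; the archimedean functional $P_{\chi,\infty}$ applied to $T_0(\nu_H)$ is a concrete matrix-coefficient integral over the totally geodesic embedding $\H^2 \hookrightarrow \H^3$, and one must verify directly from the structure of the cohomological principal series of $\PGL_2(\C)$ that $P_{\chi,\infty}(T_0(\nu_H)) \ne 0$. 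Granted this, the restriction of $P_\chi$ to $T_0(\nu_H) \otimes V_{\sigma_f}$ is a nonzero scalar times the finite period $P_{\chi,f}$. Since $V_{\sigma_f}^K \neq 0$ and $V_{\sigma_f}$ is irreducible, $\bigcup_{g \in \G(\adele_f)} \sigma_f(g) V_{\sigma_f}^K$ spans $V_{\sigma_f}$, so we may choose $g$ and $v_f \in V_{\sigma_f}^K$ with $P_{\chi,f}(\sigma_f(g) v_f) \ne 0$; taking $T$ with $T(\nu_H) = T_0(\nu_H) \otimes v_f$ and choosing $\mu$ (possibly after shrinking $L_g$ by varying $g$) so that $c_\chi \ne 0$ yields a nonzero pairing.

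For the ``equivalently'' clause, recall that the Hecke-equivariant decomposition \eqref{cow} splits $H^2(X(K)) = H^2_{\rm bc} \oplus H^2_{\rm else}$, and non-degeneracy of Poincar\'e duality identifies $H_2^{\rm bc}$ with the annihilator of $H^2_{\rm else}$. The first half of the proposition says precisely that $\mathcal{Z}_K \subset H_2^{\rm bc}$; the second half, applied to every $\sigma \in \mathcal{A}^{\rm bc} \cap \mathcal{C}$ with $\sigma_f^K \ne 0$, shows that $\mathcal{Z}_K$ pairs non-degenerately with each Hecke-isotypic summand of $H^2_{\rm bc}$, so $\mathcal{Z}_K$ surjects onto $(H^2_{\rm bc})^\ast$ and hence fills $H_2^{\rm bc}$. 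The main obstacle in this plan is the local archimedean step: one must check that for each relevant quadratic $\chi_\infty$ the functional $P_{\chi,\infty}$ does not vanish on the specific cohomological vector $T_0(\nu_H) \in V_\pi$. Every other step reduces to formal manipulations with factorizable periods together with the global input provided by Proposition \ref{P10}.
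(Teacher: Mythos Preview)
Your outline matches the paper's proof: expand $\mu$ into quadratic characters, rewrite the pairing \eqref{pairing} as a sum of periods $P_\chi$, use Proposition~\ref{P10} for the easy direction, and for the converse factor $P_\chi = P_\infty \otimes P_f$ and push the $K$-fixed finite vector around by $g \in \G(\A_f)$ to make $P_f$ nonzero. You correctly isolate the archimedean non-vanishing $P_{\chi,\infty}(T_0(\nu_H)) \neq 0$ as the one genuine obstacle.

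The paper resolves that obstacle by a different mechanism than the direct local computation you propose. First it pins down $\chi_\infty$: by \cite[Theorem~7]{Flicker1}, if the trivial character of $\R^\times$ distinguished $\sigma_\infty \cong \pi$ then $\pi$ (viewed on $\GL_2(\C)$) would be an \emph{unstable} base-change from $\mathrm{U}(1,1)$, whereas $\pi$ is the \emph{stable} base-change of the weight-$2$ discrete series; so necessarily $\chi_\infty = \mathrm{sgn}$. Second, rather than computing the archimedean integral, the paper argues \emph{globally}: if $P_\infty(\varphi_\infty) = 0$, then by the same factorization every class $[Z(L)]_{g,\mu}$ would vanish in $H_2(X(K))$, contradicting Millson's nonvanishing result \cite{Millson}. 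This sidesteps any explicit $(\g,K_\infty)$-module calculation. Your direct route is also feasible, but note you must first exclude the trivial $\chi_\infty$, since for that choice $P_{\chi,\infty}$ vanishes identically on $\pi$ and there is nothing to compute.

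One small imprecision in your final paragraph: the biconditional only gives that each base-change $\sigma$-summand is \emph{not orthogonal} to $\mathcal{Z}_K$, not that the pairing is non-degenerate on it. The upgrade is easy, though: given any nonzero $T$ with $T(\nu_H) = T_0(\nu_H) \otimes v_f$, run the same irreducibility argument with that specific $v_f$ to find $g$ with $P_{\chi,f}(\sigma_f(g)v_f) \neq 0$.
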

\begin{proof} The direct implication `only if' follows   immediately from \eqref{pairing},
together with the fact that $\mu$ lies in the span of functions $h \mapsto \chi \circ \det$.

In the converse direction: Suppose that $P_{\chi}$ is not identically zero
for some $\chi$, but 
  $\mathrm{Hom}_{K_{\infty}} (\wedge^2 \p , V_{\sigma}^K) $ 
is orthogonal to $\mathcal{Z}_K$. Then  \eqref{pairing} says at least that 
$P_{\chi}(\varphi)$ vanishes
  for every $\varphi$ of the form  $ \varphi_{\infty} \otimes g \varphi_f \in \sigma_{\infty} \otimes  \sigma_f$
where $\varphi_f$ is $K$-fixed, $g \in \G(\Afinite)$ is arbitrary,  and $\varphi_{\infty}$ is the image of $\nu_H$
under a nontrivial element of $\Hom_{K_{\infty}}(\wedge^2 \mathfrak{p}, \sigma_{\infty})$. 
Since such vectors $g \varphi_f$
span all of $\sigma_f$, we see that $P_{\chi}$ vanishes on $\varphi_{\infty} \otimes \sigma_f$.

 Now factor $P_{\chi}$ on $\sigma = \sigma_{\infty} \otimes \sigma_f$ as $P_{\infty} \otimes P_f$ (this can be done by multiplicity one, cf. \S \ref{step3}). 
 It remains to show that $P_{\infty} (\varphi_{\infty}) \neq 0$.

However,  $\chi_{\infty} $  is the nontrivial quadratic
character of $\R^*$. This is because,
if $\sigma_{\infty}  \simeq \pi$ were distinguished by the trivial character $\chi_{\infty}$, then $\sigma$ -- considered
as a representation of $\GL_2(\C)$ -- would be distinguished by $\GL_2(\R)$. It is known \cite[Theorem 7]{Flicker1}
that such representations of $\GL_2(\C)$ are the {\it unstable} base-changes of representations of $\mathrm{U} (1,1)$
but $\sigma$ is not such a representation; it is the {\it stable} base-change. 
of the weight $2$ discrete
series representation.

 Now, if $P_{\infty}(\varphi_{\infty}) = 0$, 
the above argument shows that $[Z(L)]_{g,\mu}$ would be zero for {\em every} choice of $L, \chi, g_f$ as above,
and this is not so as follows e.g. from \cite{Millson}.
\end{proof}

\subsection{Outline of the proof of Theorem \ref{Tbc}}  \label{Outline}
Write $V = \vol(Y(K))$.   Fix an embedding of $\iota: \G \hookrightarrow \SL_N$ over $F$.
For $g \in \G(\Q_v)$, we denote by $\|g\|_v$ the largest $v$-adic valuation
of any entry of $\iota(g)$.  For $g = (g_v) \in \G(\adele)$ we put
$\|g\| = \prod_v \|g_v\|_v$.

Let $\sigma_j$ (for $j$ in some index set $J$)
be all the  $\sigma \in \mathcal{C}$ such that $\sigma^K \neq 0$ and such that $\sigma$ comes from base change. 
Let $R$ be the set of ramified places, i.e.
the set of places at which $K_v \subset \G(\Q_v)$ is not maximal or where $K_v \cap \HH(\Q_v) \subset \HH(\Q_v)$ is not maximal, 
and let $\Q_R = \prod_{v \in R} \Q_v$. We decompose accordingly each $\sigma=\sigma_j$ ($j \in J$) as $$\sigma = \sigma_R \otimes \sigma^R$$ where
$\sigma_R$ is a representation of $\G (\Q_R)$ and $\sigma^R$ is a representation of $\G(\adele_F^{(R)})$, the group $\G$ over
the ``adeles omitting $R$.'' 
 
The proof now proceeds in 4 steps.   After giving the outline we discuss steps 1 and step 3 in more detail (\S \ref{step1} and \S \ref{step3}). 

 Fix $j_0 \in J$ and let $\sigma_0 = \sigma_{j_0}$. 
Let $\chi_0$ be so that $\sigma_0$ is $\chi_0$-distinguished. 
Factor $P_{\chi_0}$ on $\sigma_0 = \sigma_{0,R} \otimes \sigma_0^R$:
$$P_{\chi_0} =  P_R \otimes P^R.$$

\begin{itemize}

\item[(1)]  

We will show, first of all, that there exist  ideals $\mathfrak{p}_1, \dots, \mathfrak{p}_r$ of $F$  relatively prime to $R$ (that is, they do not lie above any place in $R$) whose norms $\Norm \mathfrak{p}_i$ are all bounded by $a V^b$ (with $a,b$  constants depending only on $F$)  and constants $c_i \in \C$ such that the Hecke operator 
$\sum_i c_i \TT_{\mathfrak{p}_i}$ 
is non-zero on $V_{\sigma_0}$ and trivial on $V_{\sigma_k}$ for every $k \in J$, $k \neq j_0$.

In other words, if $\lambda_{\mathfrak{q}}(\sigma)$ is the eigenvalue by which $T_{\mathfrak{q}}$
acts on $\sigma$, we have $\sum c_i \lambda_{\mathfrak{p}_i}(\sigma_0) \neq 0$
whereas $\sum c_i \lambda_{\mathfrak{p}_i}(\sigma_k) \neq 0$ for $k \neq j_0$. 

\item[(2)] If $v_{\rm sph}^R$ denotes the non trivial spherical vector in the space of $\sigma_0^R$, we have: 
$$P^R (v_{\rm sph}^R ) \neq 0.$$ 
We will  omit the proof; for discussion of this type of result, see  \cite[Corollary 8.0.4]{Sakellaridis}. 
 
 (We have not verified that the auxiliary conditions of \cite{Sakellaridis} apply here, although the method surely does.
 In any case,  this can be verified here by direct computation:  
Because of multiplicity one, it is enough for each $v \in R$ to show that there exists a function
on $\G(\Q_v)$ such that  $f(hgk) = \chi(h) f(g)$ for $k \in K_v$ and $h \in \mathbf{H}(\Q_v)$, such that $f(1) \neq 0$, 
and such that the  Hecke eigenvalue of $f$ is the same as $\sigma_v$. Now the cosets $\mathbf{H}(\Q_v) \backslash \G(\Q_v) / K$ are parameterized by non-negative integers
and one constructs the required $f$ as a solution to a linear recurrence.)

\item[(3)]  Now let $\varphi_1, \dots, \varphi_s$
be a basis for $V_{\sigma_0}^K$. Write $\varphi_j = \varphi_{j,R} \otimes v_{\rm sph}^R$. 
We will  show that there exist  $g_1, \dots, g_s \in \G(\Q_R)$ such that
$\|g_i\| \leq c V^d$ for constants $c,d$ depending only on $F$, and 
the matrix $(P_{R}(g_k  \cdot \varphi_{j,R}))_{1 \leq j,k \leq s}$ is nonsingular. 

\item[(4)] From the two first steps we conclude that for every $j=1 , \ldots , s$ we have:
\begin{eqnarray*}  P_{\chi_0} ( \sum_i c_i \TT_{\mathfrak{p}_i}  \varphi_j) &=& (\sum_i c_i \lambda_{\mathfrak{p}_i} (\sigma_0) ) P_R (\varphi_{j, R}) P^R ( v_{\rm sph}^R),  \\ 
P_{\chi_0} ( \sum_i c_i \TT_{\mathfrak{p}_i}  \psi)& =& 0, \ \psi \in \sigma_j \neq \sigma_0. \end{eqnarray*}

where --- according to steps 1 and 2 --- the scalars $\mu_1:= \sum_i c_i \lambda_{\mathfrak{p}_i} (\sigma_0)$ and $\mu_2 := P^R ( v_{\rm sph}^R)$ are both non zero.
Since the $g_k$ belong to $\G (\Q_R)$ and the ideals $\mathfrak{p}_i$ are  relatively prime to $R$, Step 3 finally implies that the matrix 
$$\left(  P_{\chi_0} (g_k \cdot \sum_i c_i \TT_{\mathfrak{p}_i} \cdot  \varphi_j) \right)_{ 1 \leq k \leq s ,1 \leq j \leq s }= \ \mu_1 \cdot \mu_2 \cdot \left(P_R(g_k  \cdot \varphi_{j,R}) \right)_{j,k} $$
is non singular.  \end{itemize}
 
Repeating the same reasoning for each $\sigma_j$ leads to the following refinement
of Proposition \ref{P2}:

\begin{quote} $H_2^{\rm bc}$ is spanned by cycles of the form
$\TT_{\mathfrak{p}} [Z(L)]_{g, \mu}$, where both $\Norm \mathfrak{p}$
and $\|g\|$ are bounded by a polynomial in $V$.  
\end{quote}
Using trivial estimates, we see all the cycles appearing in this statement  have volume bounded by a power of $V$.
That will conclude the proof of Theorem \ref{Tbc}.

In the following sections we provide details for steps 1 and 3. 
 
\subsection{Step 1 of \S \ref{Outline}: a quantitative `multiplicity one theorem'} \label{step1}
We first deal with automorphic representations of $\GL_2 {}_{|F}$. 
Recall the definition of the {\it analytic conductor} of Iwaniec-Sarnak:

Let $\pi = \otimes_v \pi_v$ be a cuspidal automorphic representation of $\GL_2 {}_{|F}$.
For each finite place $v$ we denote by $\mathrm{Cond}_v (\pi) = q_v^{m_v}$, where $m_v$ is the smallest non-negative integer such that
$\pi_v$ possesses a fixed vector under the subgroup of $\GL_2 (\mathfrak{o}_{F_v})$ consisting of matrices whose bottom row is congruent 
to $(0,0 , \ldots , 0, 1)$ modulo $\varpi_v^m$. Here $\varpi_v \in F_v$ is a uniformizer.  
For the infinite place $v$, let $\mu_{j,v} \in \C$ satisfy $L(s, \pi_v) = \prod (2\pi)^{-s-\mu_{j,v}} \Gamma (s+\mu_{j,v})$, and put 
$\mathrm{Cond}_v (\pi) = \prod (2+|\mu_{j,v} |)^2$. We then put $\mathrm{Cond} (\pi) = \prod_v \mathrm{Cond}_v (\pi)$ (this is within a 
constant factor of the Iwaniec-Sarnak definition). 

\begin{lem} \label{D} (Linear independence of Hecke eigenvalues) 
Given automorphic representations $\pi_1, \dots, \pi_r$ of $\GL_2 {}_{|F}$, all of which have analytic conductor at most $X$;
let $\mathcal{Q}$ be a set of prime ideals of $F$ of cardinality $\leq B \log X$ containing all ramified primes for the $\pi_i$; 
let $\{ \mathfrak{q}_j : j=1 , \ldots , s \}$ be the set of all ideals of $F$ of norm $<Y$  that are relatively prime to $\mathcal{Q}$.  Then the $r \times s$ matrix of Hecke eigenvalues
$$M_{ij}=(\lambda_{\mathfrak{q_j}}(\pi_i) )_{i, j} \quad (j=1, \ldots , s , \ i=1 , \ldots , r)$$ 
has rank $r$ so long as $Y \geq (r X)^{A}$, where $A$ is a constant depending only on $B$ and the field $F$. 
\end{lem}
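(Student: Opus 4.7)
The plan is to argue by contradiction using the effective orthogonality of Hecke eigensystems encoded in Rankin--Selberg $L$-functions. Suppose the rank of $M$ is less than $r$: then there exist complex scalars $c_1,\ldots,c_r$, not all zero, with $\sum_i c_i \lambda_{\mathfrak{q}}(\pi_i) = 0$ for every ideal $\mathfrak{q}$ of norm $<Y$ coprime to $\mathcal{Q}$. It suffices to use this relation on prime ideals (a subset of our $\mathfrak{q}_j$). Normalize so that $\max_i |c_i|=1$, attained at $i=i_0$. The strategy is then to test the relation against $\overline{\lambda_{\mathfrak{q}}(\pi_{i_0})}$ and sum, so that the $i=i_0$ term, which carries a simple pole via Rankin--Selberg, dominates the cross terms.

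Concretely, multiplying and summing over primes $\mathfrak{q}\notin\mathcal{Q}$ with $\Norm\mathfrak{q}<Y$ yields
$$0 \;=\; \sum_{i=1}^r c_i \, S_i(Y), \qquad S_i(Y) := \sum_{\substack{\mathfrak{q}\ \mathrm{prime},\ \mathfrak{q}\notin\mathcal{Q} \\ \Norm\mathfrak{q}<Y}} \lambda_{\mathfrak{q}}(\pi_i)\overline{\lambda_{\mathfrak{q}}(\pi_{i_0})}.$$
For each $i$, $L(s,\pi_i\times\tilde\pi_{i_0})$ is holomorphic on $\Re(s)\geq 1$ except for a simple pole at $s=1$ precisely when $\pi_i\cong\pi_{i_0}$, and its analytic conductor is bounded polynomially in the analytic conductors of $\pi_i$ and $\pi_{i_0}$, hence by $X^{O_F(1)}$. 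An effective prime number theorem for these $L$-functions (e.g.\ in the style of Moreno, or Iwaniec--Kowalski) then gives
$$S_{i_0}(Y) \;\gg\; \frac{Y}{\log Y}, \qquad S_i(Y) \;\ll\; Y^{1-\delta} X^{C'} \quad (i\neq i_0),$$
for some $\delta>0$ and constants depending only on $F$. The exclusion of the $|\mathcal{Q}|\leq B\log X$ primes changes each $S_i(Y)$ by at most $(B\log X)\,Y^{1/2+\epsilon}$ (using any subconvex or even trivial bound on Hecke eigenvalues), which is absorbed in the error.

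Substituting these into $\sum_i c_i S_i(Y)=0$ together with $|c_{i_0}|=1$ and $|c_i|\leq 1$ produces $Y/\log Y \ll r\,Y^{1-\delta} X^{C'}$, equivalently $Y^\delta \ll rX^{C'}\log Y$, which fails for $Y\geq (rX)^A$ with $A$ chosen sufficiently large in terms of $\delta$, $C'$ and $B$ --- each depending only on $F$. This contradiction yields the lemma. The main obstacle is precisely the quantitative input: one needs an effective Rankin--Selberg prime number theorem with implied constants and zero-free region depending \emph{polynomially} on the analytic conductor. This is by now classical (an effective form of multiplicity one), but must be invoked with care to extract the explicit polynomial exponent $A=A(B,F)$.
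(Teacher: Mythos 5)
Your overall strategy is the right one in spirit --- reduce rank deficiency to a linear relation among Hecke eigensystems and contradict it using quantitative Rankin--Selberg orthogonality --- and it matches the paper's proof at that level. But there is a genuine gap in the key analytic input. By restricting the relation to \emph{prime} ideals and summing $\lambda_{\mathfrak{q}}(\pi_i)\overline{\lambda_{\mathfrak{q}}(\pi_{i_0})}$ over primes, you need a prime number theorem for $L(s,\pi_i\times\tilde\pi_{i_0})$ with a \emph{power saving} $S_i(Y)\ll Y^{1-\delta}X^{C'}$ in the off-diagonal terms. That estimate requires controlling $-L'/L$ (or $\log L$) to the left of $\Re(s)=1$, i.e.\ a zero-free region of width a fixed power of the conductor --- effectively a quasi-Riemann hypothesis. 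It is not classical and is not known. What is known unconditionally (classical or Vinogradov-type zero-free regions, and even then modulo possible Landau--Siegel zeros of $L(s,\pi_i\times\tilde\pi_{i_0})$ for $i\neq i_0$) gives only a saving of the shape $\exp(-c\sqrt{\log Y})$ over the main term. Since in the application $r$ can itself be a power of $X$, beating the factor $r$ would then force $Y\geq \exp(C(\log X)^2)$, which destroys the polynomial bound $Y\leq (rX)^A$ that the lemma asserts and that the application requires. This is exactly why Moreno-style effective multiplicity one via primes gives only superpolynomial bounds.

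The paper avoids this by following Brumley: it forms the Gram matrix of the rows of $M$ smoothed over \emph{all ideals} coprime to $\mathcal{Q}$ (not just primes), so that the generating Dirichlet series is essentially $L^{\mathcal{Q}}(s,\pi_i\times\overline{\pi_j})/L^{\mathcal{Q}}(s,\omega_i\overline{\omega_j}\,;2s)$ itself rather than its logarithmic derivative. A contour shift to $\Re(s)=1/2+\epsilon$ then needs only upper bounds on $|L|$ (convexity, polynomial in the conductor) and a lower bound on the residue $R_i$ at the diagonal pole, yielding $\delta_{ij}YR_i+O(Y^{1-\theta}X^{B'})$ unconditionally, after which diagonal dominance gives full rank. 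To repair your argument you should either switch to the full-ideal smoothed sums as in Brumley (your linear-relation-plus-testing framework then goes through, since the relation at primes propagates to all ideals coprime to $\mathcal{Q}$ by multiplicativity), or explicitly cite Brumley's theorem rather than a ``Rankin--Selberg prime number theorem.'' A secondary, smaller point: your claim that removing the $\leq B\log X$ primes of $\mathcal{Q}$ costs only $(B\log X)Y^{1/2+\epsilon}$ uses a nontrivial bound toward Ramanujan; with only the trivial bound $|\lambda_{\mathfrak{q}}|\ll \Norm(\mathfrak{q})^{1/2}$ each excluded prime could contribute up to $\Norm(\mathfrak{q})$, so this needs a word of justification, though it is not the essential obstruction.
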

Before the proof, we show how this gives Step 1:
The Jacquet-Langlands correspondence associates to any automorphic representation $\sigma_j$ as  in \S \ref{Outline},
 a cuspidal 
automorphic representation $\pi_j = \mathrm{JL} (\sigma_j)$ of $\GL_2 {}_{|F}$ with the same Hecke eigenvalues. 
Since $\sigma_f^{K(\mathfrak{n})} \neq \{0 \}$ then $\mathrm{Cond} ( \pi_j) \ll  \Norm(\mathfrak{n})^4$. (We do not know a reference
for this bound, but that such a polynomial bound exists can be  readily derived by reducing to the supercuspidal case
and using the relationship between depth and conductor; see \cite{RL} and references therein, especially \cite{Bushnell}). 
In  particular, the conductor is bounded by  a polynomial in $\vol(Y(\mathfrak{n}))$.  

To obtain Step 1, then, we 
apply the Lemma with $\mathcal{Q} = R$, the set of ``bad'' places -- i.e. the places
that are ramified for $D$ together with primes dividing $\mathfrak{n}$. 
Note that the number of primes dividing
$\mathfrak{n}$ is $\leq \log_2(\Norm \ \mathfrak{n})$; the desired result follows, since
(in the setting of Step 1) the integer $r$ is bounded by $\dim H^2(Y(\mathfrak{n}))$,
and thus by a linear function in  $\vol Y(\mathfrak{n})$.

\begin{proof} 
This is a certain strengthening of multiplicity one and will be deduced from the quantitative multiplicity one estimate of Brumley \cite{Brumley}.
(See also \cite{JPS2, Moreno} for earlier results in the same vein.)

Consider, instead of the matrix $M$, the smoothed matrix
$N$ wherein we multiply the matrix entry $M_{ij}$
by $h(\mathrm{Norm}(\mathfrak{q}_j/Y))$,
where $h$ is a smooth real-valued bump function on the positive reals
such that $h(x) = 0$ when $x >1$ and $h$ is positive for $x<1$.   Clearly the rank of $M$ and the rank of $N$ are the same. 
 
It is enough to show that the square  ($r \times r$) Hermitian matrix 
$$ N \cdot {}^t \overline{N}$$
is of full rank $r$. 
Its $(i,j)$  entry is equal to
$$ \sum  N_{ik} \overline{N}_{jk} =  \sum_{\mathfrak{q} } \lambda_{\mathfrak{q}}(\pi_i) \overline{ \lambda_{\mathfrak{q}}(\pi_j) }
h(\mathfrak{q}/Y)^2,$$
where the sum extends over the set of $\mathfrak{q}$ with norm $< Y$ and  prime to $\mathcal{Q}$. 

This is very close to \cite[page 1471, equation (23)]{Brumley}, with a minor wrinkle:
{\em loc. cit.} discusses the corresponding sum but with  $ \lambda_{\mathfrak{q}}(\pi_i) \overline{ \lambda_{\mathfrak{q}}(\pi_j)} $
replaced by $\lambda_{\mathfrak{q}}(\pi_i \times  \overline{\pi_j})$. 
But the proof of \cite{Brumley} applies word for word here, using  the equality
\begin{equation} \label{Brumz}\sum_{\mathfrak{q}}  \frac{ \lambda_{\mathfrak{q}}(\pi_i) \overline{ \lambda_{\mathfrak{q}}(\pi_j)} }{\Norm(\mathfrak{q})^s} = \frac{L^{\mathcal{Q}}(\pi_i \times\overline{\pi_j}, s)}{L^{\mathcal{Q}}( \omega_i \overline{\omega_j},2s)}.\end{equation} 
where  $\omega_i$ is the central character of $\pi_i$, and the superscript $\mathcal{Q}$ means we take the finite $L$-function and omit all factors at the set $\mathcal{Q}$. 
It leads to the corresponding bound: 
$$ \sum_{\mathfrak{q} } \lambda_{\mathfrak{q}}(\pi_i) \overline{ \lambda_{\mathfrak{q}}(\pi_j) }
h(\mathfrak{q}/Y)^2 =  \delta_{ij} Y  \cdot R_i+ O(Y^{1-\theta}   X^{B'}).$$
Here $R_i$ is a residue of the $L$-function on the right of \eqref{Brumz}, 
$\theta$ is a positive real number (one can take $\theta=1/2$) and $B'$ is a 
constant that depends only on the constant $B$ and the field $F$.   It moreover follows from \cite[equation (21)]{Brumley} that $R_i$ is bounded below by $X^{-C}$ for some
absolute (positive) constant $C$. 
 
Now the proof follows from `diagonal dominance': 
Given a square hermitian matrix $S=(S_{ij})$ such that, for every $\alpha$, 
\begin{equation} \label{DD}
S_{\alpha \alpha} > \sum_{j \neq \alpha} |S_{\alpha j}|
\end{equation}
then $S$ is nonsingular, by an elementary argument.

Now one may choose $A$, depending only on $B$ and $F$,  so that \eqref{DD} holds as long as $Y \geq (rX)^A$.
\end{proof}


\subsection{Step 3 of \S \ref{Outline}} \label{step3} 

Let $(\sigma, V^{\sigma})  \in \mathcal{C}$ and $\chi$ be such that the functional $P_{\chi}$ is not identically vanishing on $\sigma$.  
For $p$ a prime of $\Q$, let $H_p = \mathbf{H}(\Q_p)$ and $G_p = \G( \Q_p)$.    

The multiplicity one theorem shows that the functional $P_{\chi}$ factorizes over places:
%

\begin{lem}
For any irreducible $G_p$-module $\sigma_p$ we have:
$$\dim \mathrm{Hom}_{(H_p,\chi_p)} (\sigma_p  , \C) \leq 1.$$
\end{lem}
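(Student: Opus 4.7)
The plan is to apply the Gelfand--Kazhdan criterion: one seeks an anti-involution $\theta$ of $G_p$ satisfying $(i)$ $\theta(H_p) = H_p$, $(ii)$ $\chi_p \circ \theta = \chi_p$, and $(iii)$ $\theta(x) \in H_p \, x \, H_p$ for every $x \in G_p$. From these conditions one deduces formally that every distribution on $G_p$ which is left $(H_p, \chi_p)$-equivariant and right $(H_p, \chi_p^{-1})$-equivariant is $\theta$-invariant, and a standard contragredient argument then yields the desired bound of $1$ for $\dim \Hom_{(H_p, \chi_p)}(\sigma_p, \C)$ on any irreducible admissible $\sigma_p$.

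I would split the analysis according to the splitting behaviour of $p$ in $F$. In the split case $p\OO = \mathfrak{p} \bar{\mathfrak{p}}$, we have $G_p \simeq (D_p^{\times} \times D_p^{\times})/Z$ with $H_p$ embedded diagonally; here the lemma is immediate without any Gelfand--Kazhdan machinery, since writing $\sigma_p \simeq \sigma_1 \otimes \sigma_2$ the tensor--Hom adjunction identifies $\Hom_{(H_p, \chi_p)}(\sigma_p, \C)$ with $\Hom_{H_p}(\sigma_1, \widetilde{\sigma_2} \otimes \chi_p^{-1})$, which has dimension at most one by Schur's lemma. In the non-split case, $F \otimes \Q_p$ is a quadratic field extension of $\Q_p$; because the local Hasse invariant of $D_p$ is killed upon tensoring with this quadratic extension, $D' \otimes \Q_p$ is split, so $G_p \simeq \PGL_2(F \otimes \Q_p)$. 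The lemma then becomes a multiplicity one statement for the pair $(\PGL_2(F_p), H_p)$, which is the classical distinction problem of Flicker \cite{Flicker1} when $D_p$ is split (so $H_p \simeq \PGL_2(\Q_p)$), and an inner-form variant when $D_p$ is non-split (so $H_p$ is compact mod center). In both sub-cases the anti-involution can be chosen as the map induced by $g \mapsto \bar g^{-1}$ in the split case (with $\bar{\phantom{g}}$ denoting Galois conjugation for $F_p/\Q_p$), respectively the main involution of $D_p$ composed with Galois conjugation in the division-quaternion case.

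The main obstacle is the verification of condition $(iii)$: one must check that $\theta$ preserves every $(H_p, H_p)$-double coset in $G_p$. This reduces to exhibiting a complete system of $(H_p, H_p)$-biinvariants on $G_p$ (given by suitable reduced traces and norms on $D_p'$) and checking each is $\theta$-fixed. In the split case this is precisely the calculation carried out in \cite{Flicker1}; in the inner-form case, it can be verified either by a direct computation on $H_p \backslash G_p / H_p$ using the reduced trace and norm on $D_p$, or by transporting the split-case result via Jacquet--Langlands matching of orbital integrals.
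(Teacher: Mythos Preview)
Your case division (split $p$; non-split $p$ with $D_p$ split; non-split $p$ with $D_p$ non-split) matches the paper exactly, and your treatment of the first two cases is essentially what the paper does: the split-$p$ argument via Schur is precisely what the paper calls ``easy'', and for $D_p$ split the paper likewise reduces (after twisting away $\chi_p$) to \cite[Prop.~11]{Flicker} or \cite[Theorem~A]{DP}.

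The difference is in the third case ($p$ non-split in $F$, $D_p$ a division algebra). The paper does not attempt a direct Gelfand--Kazhdan argument here; instead it invokes an exceptional isomorphism to reduce to the multiplicity-one results of \cite{Aizenbud}, and also points to \cite[Theorem~B]{DP}. Your proposal is to run Gelfand--Kazhdan directly. This is reasonable in principle, but your two suggested verifications of condition $(iii)$ are not on equal footing. The first---parametrizing $H_p \backslash G_p / H_p$ by invariants and checking $\theta$-stability---is the right idea, though you would need to specify the anti-involution with care: under the identification $G_p \simeq \PGL_2(F_v)$ the involution of $G_p$ fixing $H_p$ is not entrywise Galois conjugation but a twisted form $g \mapsto c\,\bar g\,c^{-1}$, and your $\theta$ must be built from that. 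The second suggestion---transporting the split-$D_p$ result via Jacquet--Langlands matching of orbital integrals---does not work as stated: the Jacquet--Langlands correspondence transfers characters and orbital integrals between inner forms of $H_p$, but it gives no information about the $(H_p,H_p)$-double-coset decomposition of $G_p$ or its behaviour under an anti-involution, which is what condition $(iii)$ demands.
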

\begin{proof}  If $p$ is split in $F$ the result is easy.   If $D_p$ is split this amounts to \cite[Prop. 11]{Flicker} or \cite[Theorem A]{DP}
note that by twisting  one reduces to the case of $\chi_p=1$, at the cost of allowing $\sigma_p$ to have a central character,
so one can indeed apply Prop. 11.  In the nonsplit case, there does not appear to be a convenient reference:
one can also reduce to the results of \cite{Aizenbud} using an exceptional isomorphism, and see also \cite[Theorem B]{DP} for a closely related result;    \end{proof}
 
 For simplicity in what follows, we suppose that actually 
$\chi_p$ is trivial; the general case is a twisted case of what follows. 
So let $P_p$ be a nonzero $H_p$-invariant functional on $\sigma_p$.  Denote by $V_p$ the index
of $K_p$ inside a maximal compact subgroup.  
We will now sketch a proof of the following result, which implies step (3):

\begin{quote}
If $v_1, \dots, v_r$ form a basis for $\sigma_p^{K_p}$, 
then there exist $g_i \in G_p$ with $\| g_i^{-1} \| \leq  c V_p^d$ --
where $c,d$ are constants, depending only on the embedding $\iota$ used in the definition of $\|g\|$ -- 
such that the matrix $P_p(g_i^{-1} v_j)$ is nonsingular.
\end{quote}

Consider the functions $f_j$ on $X = G_p/H_p$
defined by the rule $g \mapsto P_p(g^{-1} v_j)$. We will show that,
when restricted to the compact set
$$  \Omega = \{ g H: \|g^{-1}\| \leq  c \cdot V_p^d \}$$
 the functions $f_j$ are linearly independent. 

Suppose to the contrary, i.e. there exists
$a_1, \dots, a_r$ not all $0$ such that $\sum a_j f_j$ is zero on $\Omega$. 
However, the asymptotics of $\sum a_j f_j$ can be
computed by the theory of asymptotics on spherical varieties or even symmetric varieties (see \cite{Lagier, KT} or \cite{SV});
this theory of asymptotics shows that if $\sum a_j f_j$ vanish identically on a sufficiently large
compact set, it must in fact identically vanish everywhere, contradiction. All that is needed
is to give a sufficiently effective version of this asymptotic theory, which we sketch:

 The wavefront lemma  (\cite[Proposition 3.2]{BO} or \cite[Corollary 5.3.2]{SV})  shows that there is a set $F \subset G$ such that $HF = G$ and 
$P_p(g v_j)$ coincides  for $g \in F$
with a {\em usual} matrix coefficient $\langle g u, v_j \rangle$,
where $u$ is a vector obtained by `smoothing' $P_p$. 
The desired asymptotics  for $\sum a_j f_j$ then follow from
known asymptotics of matrix coefficients, see e.g. \cite{Casselman}; but what is needed is an explicit
control on when matrix coefficients follow their asymptotic expansion.  For supercuspidal
representations of $\GL_n$  a sufficiently strong bound has been given by Finis, Lapid and M{\"u}ller: \cite[Corollary 2]{FLM}.
In  our case of $\GL_2$ the remaining possibilities of principal series  (and their subrepresentations) can be verified by direct computation. (An alternate approach that treats the two together is to compute in the Kirillov model, using the local functional
equation to control support near $0$).

\section{The noncompact case} \label{Noncompact}

 \subsection{The main result} \label{assump}
 In this section $\G = \G_1$. 
 
  If $M$ is a noncompact manifold we define, as usual, 
$H^i_{!}$ to be the image of compactly supported cohomology $H^i_c$
inside cohomology $H^i$; and $H_{i,  !}$
to be the image of usual homology $H_i$ inside Borel--Moore homology $H_{i, \bm}$. 
 All these definitions make sense with any coefficients, in particular, either integral or complex.  If we do not specify the coefficients
 we will understand them to be $\C$. 
 
 We now suppose that
 \begin{itemize}
 \item[(i)] $ K = K_0(\mathfrak{n})$
 where $\mathfrak{n}$ is a squarefree ideal, i.e.
 $K = \prod K_v$ where 
 $$K_v= \{ \left( \begin{array}{cc} a & b \\ c & d \end{array}\right) \in \PGL_2(\OO_v):  c \in \mathfrak{n} \OO_v \}.$$
   \item[(ii)] The corresponding (possibly disconnected) symmetric space $X_0(\mathfrak{n}) = X(K)$ 
 satisfies 
 $ \dim H^1_{!}(  X_0(\mathfrak{n}) , \C) = 1$;
 let $\pi$ be  the associated automorphic representation (i.e. the unique representation 
 whose Hecke eigenvalues coincide with those of a class in this $H^1_!$). 
 As before we let $Y_0(\mathfrak{n})$ be the identity component of $X_0(\mathfrak{n})$. 
  
 \item[(iii)]$\pi$ is associated to an elliptic curve $E$  of conductor $\mathfrak{n}$ over $F$, which we moreover assume
 to not have complex multiplication.   \footnote{Over $\Q$ condition (ii)
 automatically means that $\pi$ must be associated to an elliptic curve. Over $F$,
$\pi$ is still  conjecturally associated to a rank $2$ motive over $F$ with Hodge numbers
 $(0,1), (1,0)$ and coefficient field equal to $\Q$.  Such a motive
 might arise from an abelian variety $A/F$ admitting a quaternion algebra of endomorphisms.
 We anticipate that the same method would work in this case also.}
 \end{itemize}

 Under these assumptions our main result is:
 
 \begin{thm} \label{alltogether}
 There exists
 a $L^2$ harmonic  $1$-form $\omega$ representing a nonzero class in $H^1_!(Y_0(\mathfrak{n}), \C)$, 
 with integral periods (i.e. $\int_{\gamma} \omega \in \Z$ for
 every $\gamma \in H_1$) and moreover 
 \begin{equation} \label{Endgoal} \langle \omega, \omega \rangle  \ll   A(\mathrm{Norm} \ \mathfrak{n})^{B} \end{equation}  
 for some   constants $A$ and $B$ depending only on $F$. 
 \end{thm}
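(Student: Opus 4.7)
The plan is to realize $\omega$ as a rescaling of the natural harmonic form attached to the associated cusp form, to control the rescaling constant via modular symbols, and to bound the resulting $L^2$-norm by combining a Rankin--Selberg identity for the Petersson norm with a Frey--Szpiro lower bound on the period of $E$. Throughout, let $f$ be the Hecke eigenform on $Y_0(\mathfrak{n})$ with eigenvalues $a_{\mathfrak{p}}(E)$ (unique up to scalar by (ii)--(iii)), let $\omega_f$ be its harmonic representative, and let $\Omega_E$ denote a fundamental N\'eron period of $E$ at a chosen complex embedding $F\hookrightarrow\C$. I seek $\omega = c\,\omega_f$ with the scalar $c$ chosen so that $\omega$ has integral periods and $|c|^2\,\|\omega_f\|^2 \ll \mathrm{Norm}(\mathfrak{n})^{O(1)}$.

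The key first step is to exhibit an integral $1$-cycle $\gamma\in H_1(Y_0(\mathfrak{n}),\Z)$ of polynomial complexity such that $\left|\int_\gamma \omega_f\right|$ is, up to polynomial factors in $\mathrm{Norm}(\mathfrak{n})$, equal to $|\Omega_E|$. The natural candidates are twisted modular symbols $\gamma_\chi$ attached to ray class characters $\chi$ of polynomially bounded conductor: the equivariant BSD conjecture computes $\int_{\gamma_\chi}\omega_f$ as an explicit rational multiple of $\Omega_E\cdot L(E/F,\chi,1)/\Omega_\chi$. A single $\chi$ may yield a vanishing or very small $L$-value --- the ``unpleasant denominators'' flagged in \S\ref{Outline} --- so I would take $\gamma = \gamma_{\chi_1}\pm\gamma_{\chi_2}$ a sum of two well-chosen twists, arranged so that the combined $L$-value contribution is bounded below by $\mathrm{Norm}(\mathfrak{n})^{-O(1)}$; nonvanishing of such twists in the non-CM case is supplied by a Rohrlich-type theorem. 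Setting $c := 1/\int_\gamma \omega_f$ gives $|c|\asymp |\Omega_E|^{-1}$ up to polynomial factors with $\int_\gamma \omega = 1$; since $H^1_!$ is one-dimensional and Hecke-stable, a further polynomially bounded integer rescaling (controlled by the equivariant BSD comparison between the period lattice of $\omega_f$ and the N\'eron lattice of $E$) then clears the denominators of the remaining periods, upgrading to full integrality without harming the estimate on $|c|$.

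For the norm bound I would apply the Rankin--Selberg identity for $\mathrm{GL}_2/F$, giving $\|\omega_f\|^2 \asymp L(E/F,\mathrm{Ad},1)\cdot \mathrm{Norm}(\mathfrak{n})$ up to polynomial factors, together with the convexity bound $L(\mathrm{Ad},1) \ll \mathrm{Norm}(\mathfrak{n})^{O(1)}$. For $|\Omega_E|^{-2}$, the Frey--Szpiro conjecture gives $|\disc(E)|\ll \mathrm{Norm}(\mathfrak{n})^{B}$; the standard Weierstrass homogeneity $|\Omega_E|^{12}\asymp|\disc(E)|^{-1}$ (with bounded shape factor coming from the period ratio $\tau$ lying in a fundamental domain) then yields $|\Omega_E|^{-2}\ll \mathrm{Norm}(\mathfrak{n})^{B/6}$. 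Combining, $\|\omega\|^2 = c^2\|\omega_f\|^2 \ll \mathrm{Norm}(\mathfrak{n})^{O(1)}$, as required.

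The principal difficulty lies in the quantitative lower bound on $\left|\int_\gamma \omega_f\right|$: it requires the equivariant form of BSD (not merely the classical statement) to identify the modular-symbol integral with an algebraic $L$-value, a non-vanishing-of-twists input for non-CM $E$ so that the two-symbol combination avoids degeneracy, and effective control of the auxiliary ray-class conductors to keep $\gamma$ of polynomial complexity. The polynomial quality of the final bound is inherited directly from Frey--Szpiro: a subexponential form of the latter would propagate only a subexponential bound on $\|\omega\|^2$, which, however, is precisely the strength needed to drive Theorem \ref{torsiontheorem}.
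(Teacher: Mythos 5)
Your overall architecture (rescale the Whittaker-normalized harmonic form $\omega_f$ by roughly $\Omega_E^{-1}$, bound $\|\omega_f\|^2$ by Rankin--Selberg, bound $\Omega_E^{-1}$ by Frey--Szpiro) matches the paper's, but there is a genuine gap at the step you dispose of in one clause: ``a further polynomially bounded integer rescaling \ldots clears the denominators of the remaining periods.'' Establishing that \emph{every} period $\int_\gamma\omega_f$, $\gamma\in H_{1}$, lies in $\Omega_E\cdot\frac{1}{M}\Z$ for a polynomially bounded integer $M$ is the entire technical content of the proof, not a corollary of normalizing one period. An arbitrary class $\gamma$ is a sum of modular symbols $\langle\alpha,\beta;g_f\rangle$ whose conductors $\mathfrak{f}$ and denominators $\varphi(\mathfrak{f})$ are a priori unbounded, and the characters $\psi$ entering the torus-unfolding of $\int_{\langle 0,\infty;g_f\rangle}\omega$ have order and conductor governed by $\mathfrak{f}$; equivariant BSD then only places $L(\frac12,\pi\times\psi)$ in $\overline{\Z}[\frac{1}{N'_\psi}]\cdot\Omega_E/|E(F_\psi)_{\tors}|^2$. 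To control this one needs: (a) the denominator-avoidance lemma --- splitting each symbol as $\langle 0,x\rangle+\langle x,\infty\rangle$ with $x$ produced by Chebotarev so that both pieces have conductor prime to a chosen modulus and denominator prime to a chosen $\ell$ (this is where the ``sum of two well-chosen'' objects really lives, at the level of modular symbols, not of twist characters); (b) explicit local computations bounding the denominators of the local zeta integrals $I_v$ at ramified places and at places dividing $\mathfrak{n}$; and (c) a polynomial bound on $\sup_\psi|E(F_\psi)_{\tors}|$, which requires Serre's open image theorem plus a Masser--W\"ustholz isogeny estimate and is precisely where the non-CM hypothesis enters. None of these appear in your proposal, and without them the ``integer rescaling'' clearing all denominators could be exponentially large.

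A second, independent problem is that your route demands an input the paper deliberately avoids: a quantitative \emph{lower} bound $\left|\int_\gamma\omega_f\right|\gg\Omega_E\,\mathrm{Norm}(\mathfrak{n})^{-O(1)}$ for some cycle of polynomial complexity, i.e.\ effective non-vanishing of a combination of twisted central $L$-values. Rohrlich-type theorems give non-vanishing for all but finitely many twists but are not effective in the uniformity you need, and a lower bound on a nonzero algebraic $L$-value again presupposes the denominator control of (a)--(c). The paper's argument is purely an \emph{upper-bound/integrality} argument: it never needs any period to be nonzero or bounded below, only that all periods lie in a fixed, polynomially bounded multiple of $\Omega_E^{-1}\Z$ after rescaling. (Your Weierstrass-homogeneity bound $|\Omega_E|^{12}\asymp|\mathrm{disc}(E)|^{-1}$ is also not literally correct --- the shape factor $\mathrm{Im}(\tau)$ is unbounded in a fundamental domain --- but the Faltings-height formulation used in the paper repairs this and is a minor point by comparison.)
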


By methods similar to \S \ref{GHT} this proves Conjecture \ref{conj} in this case, i.e. 
 \begin{quote} 
If $Y_0(\mathfrak{n})$ is as above, 
 there exist immersed compact surfaces  $S_i$ of genus $\ll   \vol(Y_0(\mathfrak{n}))^C$ 
such that the images of $[S_i]$ span $H_{2} (Y_0(\mathfrak{n}), \Z)$. 
\end{quote} 

Passing $\omega$ through  Poincar{\'e}-Lefschetz duality 
$$H^1_{!} (Y_0(\mathfrak{n})) \simeq \mathrm{Im} (  H_2 (Y_0 (\mathfrak{n} )) \to H_{2, \bm} (Y_0 (\mathfrak{n} ))$$ 
one obtains a generator for the image of $H_2 (Y_0 (\mathfrak{n}) )$ in $H_{2, \bm} (Y_0 (\mathfrak{n} )) \cong H_2 (Y_0 (\mathfrak{n})_{\rm tr} , \partial Y_0 (\mathfrak{n})_{\rm tr} )$.
One can represent this generator as in \S \ref{GHT}, and we just outline the process: 
%
%

Fix a triangulation of $Y_0 (1)_{\rm tr}$ such that the boundary is a full subcomplex; it then follows that the boundary is a deformation retract of the subcomplex which consists of all simplices that intersect the boundary. We will also assume that all the edges of the dual cell subdivision have length $\leq 1$. We
finally lift this triangulation to a triangulation $K$ of $Y_0 (\mathfrak{n})_{\rm tr}$, denote by $\partial K$ the full subcomplex corresponding to the (tori) boundary components and let $K'$ denote the subcomplex of the first barycentric subdivision of $K$ consisting of all simplices that are disjoint from $\partial K$. Then the $2$-cycle
\begin{equation} \label{Zdef} Z := \sum_e \left( \int_e \omega \right) e^* \in 
C_2 (K , \partial K , \mathbb{R}).\end{equation} 
represents the image of the class $[\omega]$ under the Poincar\'e-Lefschetz duality.
 
 Thus, as in in \S \ref{GHT}, we have 
\begin{multline} \label{eqnGT}
\mathrm{inf} \{ \sum |n_k| \; \big| \; [ \sum n_k \sigma_k ] = [Z] \mbox{ where } \sum n_k \sigma_k \mbox{ is a singular chain in $C_2(
Y_0(\mathfrak{n})_{\rm tr}, \partial Y_0(\mathfrak{n})_{\rm tr}
)$
 }  \}
 \\ \ll   \Norm \mathfrak{n}^{\mathrm{A}}. \end{multline}

Now Gabai's theorem --- used in \S \ref{GHT} --- holds for $H_2 (Y_0 (\mathfrak{n})_{\rm tr} , \partial Y_0 (\mathfrak{n})_{\rm tr} )$: the $2$-cycle $Z$ is homologous 
into a (maybe disconnected) embedded surface 
$$(S, \partial S) \subset (Y_0 (\mathfrak{n})_{\rm tr}, \partial Y_0 (\mathfrak{n})_{\rm tr})$$ 
such that the LHS of \eqref{eqnGT} is $ \sum_{\chi(S_i) <0} -2 \chi(S_i)$, the sum being taken over components $S_i$ of $S$.  
Since $\partial Y_0 (\mathfrak{n})_{\rm tr}$ is incompressible and $Y_0 (\mathfrak{n})_{\rm tr}$ is atoroidal and aspherical 
we may furthermore assume that all components $S_i$ have negative Euler characteristic.

Note that the surface $S$ could a priori have boundary, but since $[S] = [Z]$ belongs to the image of 
$H_2 (Y_0 (\mathfrak{n})_{\rm tr} )$ in $H_2 (Y_0 (\mathfrak{n})_{\rm tr} , \partial Y_0 (\mathfrak{n})_{\rm tr} )$, the image of $[S]$ in $H_1 (\partial Y_0 (\mathfrak{n})_{\rm tr})$ 
by the boundary operator in the long exact sequence associated to the pair $(Y_0 (\mathfrak{n})_{\rm tr} , \partial Y_0 (\mathfrak{n})_{\rm tr})$ is trivial. 

 We can close $S$  using discs or annuli on the boundary tori, because $\partial S$ intersects each boundary torus
 in a union of simple closed curves $\gamma_j$. One first closes each $\gamma_j$ which is null-homotopic by a disc;
 and the remaining $\gamma_j$ must be be parallel and all define, up to sign, the same primitive class in homology; we can 
 close them in pairs by annuli.

 Let $f$ be the total number of discs adjoined
 when closing the boundary curves.  
The closing process has only increased the total Euler characteristic of $S$ by $f$, so we arrive now at a closed surface $S'$
 with Euler characteristic
 $$\chi(S') = \chi(S) + f  = \sum_{\chi(S_i) < 0} \chi(S_i) +  f.$$
 Finally, we may remove from $S'$ all components that are either tori or spheres,
 because both cases must have trivial class inside $H_2 (Y_0 (\mathfrak{n})_{\rm tr} , \partial Y_0 (\mathfrak{n})_{\rm tr} )$.
 Removing the tori components does not change the Euler characteristic but removing
 the sphere components decreases it and therefore increase the complexity. This is the last issue we have to deal with.

Each component $S_i'$ of $S'$ that is a sphere meets $S$ along spheres with $\geq 3$ boundary components. 
So each such $S_i'$ corresponds to a component of $S_i^*$ of $S$
with $\chi(S_i^*)  \leq -1$, and distinct $i$'s give rise to distinct components. So 
$$ \sum_{\chi(S_i ')=2} \chi(S_i') \leq  \sum_{\chi(S_i) <0} -2\chi(S_i).$$

%

%

%

%


%

Therefore, the total Euler characteristic of all sphere components of $S'$
is at most $\sum_{\chi(S_i) < 0} -2\chi(S_i)$, and removing these and tori
gives a closed surface $S''$ with Euler characteristic 
$$ \chi(S')  \geq \sum_{\chi(S_i) < 0} 3 \chi(S_i) +f \geq \sum_{\chi(S_i) < 0} 3 \chi(S_i)$$
where $S''$ still represents $Z \in H_2(  Y_0 (\mathfrak{n})_{\rm tr} , \partial Y_0 (\mathfrak{n})_{\rm tr} )$.
   
   This bounds the complexity of the ($1$-dimensional) image of $H_2 (Y_0 (\mathfrak{n}) )$ in $H_{2, \bm} (Y_0 (\mathfrak{n} ))$.
Finally, since the homology classes of the cusps are represented by surfaces of genus $1$, the Conjecture follows.

 \subsection{Modular symbols} \label{ss:MS} 
 We henceforth suppose we are in the situation of \S \ref{sec:arithmanifolds} with $\G = \G_1 (=\mathrm{Res}_{F/\Q} \PGL_2)$;
in what follows, we will usually think of $\G$ as $\PGL_2$ over $F$, rather than the scalar-restricted group to $\Q$. 
 
 Let $\alpha, \beta \in \mathbf{P}^1(F)$ and  $g_f \in \G(\adele_{f})/K_f$. Then the geodesic from $\alpha$ to $\beta$
 (considered as elements of $\mathbf{P}^1(\C)$, the boundary of $\mathbf{H}^3$),
 translated by $g_f$, 
 defines a class in 
 $H_{1,\bm}(X(K))$ that we denote by $\langle \alpha, \beta ; g_f \rangle$. 
 Evidently these satisfy the relation
 $$\langle \alpha, \beta ; g_f \rangle + \langle \beta, \gamma;  g_f \rangle + \langle \gamma, \alpha ; g_f \rangle = 0$$
 the left-hand side being the (translate by $g_f$ of the) boundary of the Borel--Moore chain defined
 by the ideal triangle  with vertices at $\alpha, \beta, \gamma$. Note that $\langle \alpha, \beta; g_f \rangle = \langle \gamma \alpha, \gamma \beta; \gamma g_f \rangle$
 for $\gamma \in \PGL_2(F)$. 
  
\medskip

For a finite place $v$ of $F$, let $F_v, \OO_v, q_v$ denote the completion of $F$ at $v$, the ring of integers of $F_v$ 
and the cardinality of the residue field of $F_v$, respectively. By the {\em valuation} at $v$  of the triple $\langle \alpha, \beta; g_f \rangle$ 
we shall mean the distance between:
\begin{itemize}
\item[-] 
the geodesic from $\alpha_v , \beta_v \in \mathbf{P}^1(F_v)$ inside
 the Bruhat-Tits tree of $\G(F_v)$, and
 \item[-] the point in that tree
 defined by $g_f \OO_v^2$.
 \end{itemize}
 i.e., the minimum distance
 between a vertex on this geodesic and the vertex whose stabilizer is
 $\Ad(g_f) \PGL_2(\OO_v)$.

  Let $n_v$ be the valuation of the symbol $\langle \alpha, \beta; g_f \rangle$ at $v$. We define 
the   {\em conductor} of the symbol to be $\mathfrak{f} = \prod_{v} \mathfrak{q}_v^{n_v}$, 
 where $\mathfrak{q}_v$ is the prime ideal associated to the place $v$; and 
the  {\em denominator} of the symbol $\langle \alpha, \beta ; g_f \rangle$ is then defined 
\begin{equation} \label{denomdef} \denom(\langle \alpha, \beta; g_f \rangle) = |(\OO/\mathfrak{f})^{\times}| =  \prod_{v: n_v \geq 1} \left( q_v^{n_v-1} (q_v-1) \right), \end{equation}  
where $q_v$ is the norm of $\mathfrak{q}_v$. 
We sometimes write this as the Euler $\varphi$ function $\varphi(\mathfrak{f})$.

 Let $\T$ be the stabilizer of $\alpha, \beta$ in $\PGL_2$;  it is isomorphic to the multiplicative group $\T \simeq \mathbb{G}_m$
 and the isomorphism is unique up to sign. 
 Then
 $\T(\OO_v) \cap  \Ad(g_f) \PGL_2(\OO_v)  $ corresponds to the subgroup 
 $1+\mathfrak{q}_v^{n_v} \subset \mathbb{G}_m(F_v)=F_v^{\times}$ if $n_v \geq 1$, and
 otherwise to the maximal compact subgroup of $F_v^{\times}$. (For example, 
 to see the latter statement, note that $\T(\OO_v)$ fixes exactly the geodesic from $\alpha$ to $\beta$
 inside the building of $\PGL_2(F_v)$.)
 
   In particular,  any finite order character $\psi$
of $\T(\adele_F)/\T(F)\simeq \adele_F^{\times}/F^*$ that is trivial on $\T(\adele_F) \cap  \Ad(g_f) \PGL_2(\widehat{\OO}) $
has conductor dividing  $\mathfrak{f}$ and order dividing $h_F \varphi(\mathfrak{f})$, where
$$ h_F = \mbox{ order of narrow class group $C_F$ of $F$.}$$
  More generally,
if $\psi$ is trivial on  $\T(\adele_F) \cap  \Ad(g_f)  K_0(\mathfrak{n})  $, with $\mathfrak{n}$ a squarefree ideal, 
then -- by a similar argument -- the conductor of $\psi$ divides $\mathfrak{n} \mathfrak{f}$ and its order divides
$h_F \varphi(\mathfrak{n} \mathfrak{f})$, in particular, its order divides 
\begin{equation} \label{orderupperbound} h_F \varphi(\mathfrak{f}) \cdot \mathrm{Norm}(\mathfrak{n}) \cdot  \varphi(\mathfrak{n}). \end{equation}

Note that another way to present our arguments would be to use a stronger version of ``conductor''
designed so that it takes account of level structure at $\mathfrak{n}$. 
 This leads to a more elaborate version of \S \ref{denomavoid} but simplifies other parts of the argument,
 because the factors of $\mathfrak{n}$ are no longer present in \eqref{orderupperbound}. 
See \S \ref{fix} for comments on that.  

  \subsection{Denominator avoidance and its proof} \label{denomavoid} 

\begin{lem*} Fix any integer $M$.  Let $p$ be a prime number. 
 If $p >5$ (resp. $p \leq 5$) any class in $H_{1, \bm}(Y(K), \Z)$
 is represented as a sum of symbols $\langle \alpha, \beta, g_f \rangle$, 
 each of which has  conductor relatively prime to $Mp$ and denominator indivisible by $p$ (resp. divisible by at most $p^A$, for an absolute constant $A$). 
 \end{lem*}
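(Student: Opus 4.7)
The plan is to begin with a representation of any class in $H_{1,\bm}(Y(K),\Z)$ as a finite sum of symbols $\langle \alpha_i, \beta_i; g_{f,i}\rangle$ -- a standard consequence of the fact that modular symbols span Borel--Moore homology -- and then repeatedly apply the decomposition
$\langle \alpha, \beta; g_f\rangle = \langle \alpha, \gamma; g_f\rangle + \langle \gamma, \beta; g_f\rangle$
for carefully chosen $\gamma \in \mathbf{P}^1(F)$ in order to kill contributions of ``bad'' primes to the conductor. Call a prime $\mathfrak{q}$ of $F$ \emph{bad} if either $\mathfrak{q}$ divides $Mp$, or $\Norm(\mathfrak{q}) \equiv 1 \pmod{p}$; once the conductor of every summand is supported away from the bad primes, the denominator $\varphi(\mathfrak{f}) = \prod_{n_v \geq 1} q_v^{n_v-1}(q_v - 1)$ is automatically coprime to $p$, as required.

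The core technical step reduces a single bad summand. Suppose $\sigma = \langle \alpha, \beta; g_f\rangle$ has $v$-part of conductor $\mathfrak{q}_v^{n_v}$ with $\mathfrak{q}_v$ bad and $n_v \geq 1$; geometrically, the geodesic from $\alpha_v$ to $\beta_v$ in the Bruhat--Tits tree of $\G(F_v)$ misses the vertex $g_f\OO_v^2$ by distance $n_v$. I would choose $\gamma_v \in F_v$ in a sufficiently small $v$-adic neighborhood of that vertex so that both of the geodesics $[\alpha_v, \gamma_v]$ and $[\gamma_v, \beta_v]$ pass through it, forcing the $v$-parts of both new conductors to vanish. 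Simultaneously, I would impose ``general position'' local conditions on $\gamma$ at every other bad place $w$ that could conceivably enter either new conductor; this set is finite because only finitely many bad primes of $F$ have norm below an explicit bound depending on the input data. Weak approximation on $\mathbf{P}^1(F)$ then produces a global $\gamma \in F$ satisfying all these local conditions at once, and an induction on a suitable measure of bad-prime content in the total conductor concludes the reduction.

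The small-prime range $p \leq 5$ requires modification because the condition ``$\Norm(\mathfrak{q}_v) \not\equiv 1 \pmod p$'' becomes essentially unsatisfiable: when $p = 2$, for instance, every prime of odd residue characteristic automatically satisfies $\Norm(\mathfrak{q}_v) \equiv 1 \pmod 2$. The weaker conclusion ``$p$-part of the denominator is at most $p^A$'' is met by showing that only a uniformly bounded number of bad-prime contributions at the small-characteristic places can survive the reduction: one controls the $p$-adic valuation $v_p(q_w - 1)$ at each prime $w$ of small residue characteristic using an elementary number-theoretic bound, and exploits the fact that the local trees at such places have so few branches that only a bounded number of ``splitting moves'' involving them are ever needed.

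The principal obstacle I anticipate is the global bookkeeping: each local adjustment of $\gamma$ perturbs the tree configuration simultaneously at all other finite places, and a naive accounting leaves open the possibility of an infinite regress in which fixing a bad prime at $v$ introduces new bad primes at a growing set of places $w$. The rigorous proof must exhibit strict decrease in a well-chosen well-founded ordering on the conductor -- for example, a lexicographic ordering first by the set of bad primes appearing, with ties broken by total norm -- and must verify that weak approximation can simultaneously satisfy all the local constraints needed to enforce this decrease. It is precisely the distinction in available branch structure at the relevant local trees that separates the case $p > 5$ from $p \leq 5$.
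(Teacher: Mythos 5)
There is a genuine gap, and it sits exactly at the step you flag as "global bookkeeping" but then dismiss with weak approximation. After normalizing $\alpha=0$, $\beta=\infty$, the conductor of the two new symbols $\langle 0,\gamma; g_f\rangle$ and $\langle \gamma,\infty; g_f\rangle$ at a place $v$ with $g_v\in \PGL_2(\OO_v)$ is nontrivial precisely when $v(\gamma)\neq 0$: it is governed by the \emph{support of the divisor} of $\gamma$, not by its position in finitely many local trees. Weak approximation lets you prescribe $\gamma$ at finitely many places, but gives no control whatsoever over which primes divide the numerator and denominator of $\gamma$ elsewhere; and the primes $\mathfrak{q}$ with $\Norm\mathfrak{q}\equiv 1 \pmod p$ have positive density, so they cannot be excluded by any norm bound. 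Your claim that "only finitely many bad primes \dots could conceivably enter either new conductor" is therefore false as stated: the set of primes entering the new conductors depends on the chosen $\gamma$ and is not bounded in advance. What is actually needed is to produce $x=a_1b_1/(a_2b_2)$ whose divisor is supported entirely on primes that are simultaneously prime to $Mp$, of norm $\not\equiv 1\pmod{p^{A_p}}$, and arranged so that $x$ lands in the prescribed open sets $S_v'$ at the finitely many places where $g_v\notin\PGL_2(\OO_v)$. This forces a Chebotarev-type argument (applied to the compositum of the Hilbert class field with $F(\mu_{p^{A_p}})$) to find prime ideals in prescribed ideal classes and prescribed congruence classes whose norms avoid $1$ modulo $p^{A_p}$; this is the heart of the paper's proof and is entirely absent from your proposal. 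Once this is done, a single split suffices, so the well-founded-ordering/induction apparatus you anticipate is not needed.

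Your treatment of $p\leq 5$ also misses the actual mechanism. The correct fix is not to bound the number of "splitting moves" at small residue characteristic, but to replace the condition $\Norm\mathfrak{q}\not\equiv 1\pmod p$ by $\Norm\mathfrak{q}\not\equiv 1\pmod{p^{A_p}}$ with $A_p=3$: the relevant Galois (or norm) image in $(\Z/p^{A_p}\Z)^{\times}$ then has size at least $p^{A_p-1}(p-1)/2>1$, so the class of $1$ can be avoided, at the cost of letting each good prime contribute up to $p^{A_p-1}$ to the denominator --- whence the bound $p^{2(A_p-1)}$ rather than exact indivisibility.
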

\begin{proof} 
 This is a slight sharpening of results in \cite[\S 6.7.5]{CV}. In fact, 
there is a slight error in \cite{CV} which does not deal properly with the case
 when $g_f \notin \PGL_2(\OO_v)$; the argument below in any case fixes that error.  
 
 As in \cite{CV} the Borel--Moore homology is generated by $\langle 0, \infty; g_f \rangle$ for varied $g_f$ (in the classical case, this goes back to Manin, and the proof is the same here). 
  Set $A_p = 1$ for $p >5$ and $A_p=3$ for $p \leq 5$.

One writes  
$$ \langle 0, \infty; g_f \rangle = \langle 0, x ; g_f \rangle + \langle x, \infty; g_f \rangle$$
for a suitable $x \in \mathbf{P}^1(F)$. 

First of all, if $g_v \in \PGL_2(\OO_v)$, and the prime ideal $\mathfrak{q}_v$ associated to $v$  divides the conductor of either 
$\langle 0, x ; g_f \rangle$ or $\langle x, \infty; g_f \rangle$, then $v(x) \neq 0$. 
 
Now suppose that $v$ belongs to the set of finite set  $\mathcal{B}$ of places such that $g_v \notin \PGL_2(\OO_v)$. In the Bruhat-Tits tree of $\G(F_v)$ consider  the subtree rooted at $[g_v \OO_v^2]$ which consists of the half-geodesics that  intersect the geodesic from $0$ to $\infty$ at most in the vertex $g_v \OO_v^2$.  Its 
boundary at infinity defines an open subset $S_v \subset \mathbf{P}^1(F_v)$,
and the conductors of both $\langle 0, x; g_f \rangle$ and $\langle x, \infty; g_f \rangle$
are prime to $\mathfrak{q}_v$ if $x$ belongs to this subset. 

Being open, $S_v$ contains a subset $S_v'$ of the form
\begin{equation} \label{Svpdef} S_ v' = \varpi_v^{n_v} \beta_v (1+ \varpi_v^{m_v} \OO_v).\end{equation} 
where $n_v $ is an integer, $m_v$ is an integer $\geq 1$, $\varpi_v$ is a uniformizer, and
$\beta_v \in \OO_v^{\times}$.  Write $n_v^+ = \max(n_v,0)$ and $n_v^- = \max(-n_v, 0)$ and set 
$$ \mathfrak{n}_0 =   \prod_{v \in \mathcal{B}} \mathfrak{q}_v^{m_v}, \ \ \mathfrak{a}_1 = \prod_{v \in \mathcal{B}} \mathfrak{q}_v^{n_v^+}, \ \ \ 
\mathfrak{a}_2 =  \prod_{v \in \mathcal{B}} \mathfrak{q}_v^{n_v^-}.$$

We say a prime ideal $\mathfrak{p}$ is {\it good} if it is prime to $Mp$,
 its norm is not congruent to $1$ modulo $p^{A_p}$, and it does not lie in the set $\mathcal{B}$.

%

Now we claim that we may always find $x=\frac{a_1 b_1}{a_2 b_2}$ with the following properties:

\begin{itemize}
\item[(i)]   $a_1, a_2$ have the prime factorization 
$$(a_i)=\mathfrak{a}_i \cdot \mathfrak{a}_i',$$ 
where the $\mathfrak{a}_i'$ are
good prime ideals. In particular, $v(\frac{a_1}{a_2}) = n_v$ for every $v \in \mathcal{B}$. 

 \item[(ii)] \begin{equation} \label{fngs}  \frac{b_1}{b_2} \in  \left( \frac{a_2}{a_1} \varpi_v^{n_v} \right) \beta_v  \left(1+ \varpi_v^{m_v}  \OO_v\right)  \end{equation}
 for every $v \in \mathcal{B}$ (note that this forces $x \in S_v'$ for every $v \in \mathcal{B}$).  
 
\item[(iii)]   $b_1, b_2$ generate principal good  prime ideals $\mathfrak{b}_1, \mathfrak{b}_2$;

\end{itemize} 
Given such $a_i, b_i$ we are done:
Because of \eqref{fngs} and \eqref{Svpdef}, the conductor of $\langle 0, x; g_f \rangle$ and $\langle x, \infty; g_f \rangle$
is not divisible by $\mathfrak{q}_v$ if $v \in \mathcal{B}$. Otherwise, if $v \notin \mathcal{B}$, 
then $g_v \in \PGL_2(\OO_v)$.  In that case, $\mathfrak{q}_v$
divides the conductor of either symbol only when $v(x) \neq 0$. In other words,
the only primes dividing the conductor will be 
primes in the set $\{\mathfrak{a}_1', \mathfrak{a}_2', \mathfrak{b}_1, \mathfrak{b}_2\}$.
Any prime $\mathfrak{q}$ in this set is prime to $Mp$,
so that the conductor is prime to $Mp$. Also, for any prime $\mathfrak{q}$
in this set,  $\Norm \mathfrak{q} - 1$
 is not divisible by $p^{A_p}$. Thus the denominator of either symbol
 is divisible at most by $p^{2(A_p-1)}$.


We first find $a_1, a_2$ to satisfy  (i).  We  then find $b_1, b_2$ to satisfy (ii), (iii).
  
 For (i), we apply the Chebotarev density theorem to the homomorphism $\mathrm{Gal}(\bar{F} /F) \rightarrow C_F \times (\Z/p^{A_p} \Z)$
 arising from the Hilbert class field (for the $C_F =\mbox{ class group}$ factor) and from the extension
 $F(\mu_{p^{A_p}})\supset F$  (for the $(\Z/p^{A_p} \Z)^{\times}$).  
 Now the kernel of $\Gal \rightarrow C_F$ does not project trivially to the second factor; 
 considering inertia shows that the image has size at least $\frac{p^{A_p-1}(p-1)}{2}  > 1$. 
  The Chebotarev density theorem now shows that there are infinitely many prime ideals $\mathfrak{p}$
  whose image in $C_F$ is the same class as $\mathfrak{a}_1^{-1}$ (or $\mathfrak{a}_2^{-1}$), and whose image
  in $(\Z/p^{A_p} \Z)^\times$ is nontrivial. Now take $a_1$ to be a generator for the principal ideal $\mathfrak{p} \mathfrak{a}_1$, 
  where the norm of $\mathfrak{p}$ is taken sufficiently large to guarantee that $\mathfrak{p}$ is prime to $Mp \mathcal{B}$.
Similarly for $a_2$. 
  
  Now, once we have found $a_1, a_2$, then condition (ii) amounts to the following: 
  for a certain class $\lambda \in (\OO_v/\mathfrak{n}_0)^{\times}$ defined by the right-hand side of \eqref{fngs}, we want to have
\begin{equation} \label{ii2} \frac{b_1}{b_2} \equiv \lambda \mbox{ modulo $\mathfrak{n}_0$.}
   \end{equation}  

To get \eqref{ii2} and (iii) is another application of Chebotarev:  
Write ${\frakn}_0 = {\frakn}_1 {\frakn}_2$
where ${\frakn}_1$ is prime-to-$p$ and ${\frakn}_2$
is divisible only by primes above $p$. Choose  $\bar{b}_1, \bar{b}_2  \in (\OO_F/p^{A_p} {\frakn}_2)^{\times}$ such that 
 $\bar{b}_1 \equiv \lambda \bar{b}_2 \ \mbox{mod}  {\frakn}_2$ and the norms of $\bar{b}_1, \bar{b}_2$
 (under the map $$\OO_F/p^{A_p} {\frakn}_2 \rightarrow \OO_F/p^{A_p}\stackrel{\mathrm{N}}{\rightarrow} \Z/p^{A_p} \Z$$ are not congruent to $1$. This can be done, for the
 image of the norm map
  $ (\OO_F/ p^{A_p})^{\times} \rightarrow (\Z/p^{A_p})^{\times}$
has size strictly larger than $2$. 
Now take for $b_1$ a lift of 
 $$\left( \mbox{$\lambda$ mod $\mathfrak{n}_1$}\right)  \times \bar{b}_1 \in (\OO_F/{\frakn}_1) ^{\times} \times  \left( \OO_F/p^{A_p}{\frakn}_2\right)^{\times}
 \simeq (\OO_F/{\frakn}_1 {\frakn}_2 p^{A_p})^{\times}$$
 to a generator $\pi$ of a principal prime ideal,
 and take $b_2$ similarly to be a lift $\pi'$ of $1 \times \bar{b}_2$; these lifts can be done in infinitely many ways, so certainly the prime ideals can be taken prime to $Mp \mathcal{B}$. 
 Moreover, the norm of $(b_1)$ equals the norm of $\pi$ (note this is automatically positive) and thus  is not congruent
 to $1$ modulo $p^{A_p}$. Similarly for $(b_2)$. 

\end{proof}

\subsection{} \label{fix} 
This section is not necessary for the proof. It is rather a commentary on how parts of the proof could be simplified
at the cost of expanding the prior subsection. 

A complication in the later proof arises at various points because of primes dividing $\mathfrak{n}$.
For example, we have to explicitly evaluate some local integrals  (\eqref{explicit}), 
we cannot assume that the conductors of $E, \psi$ are relatively prime in Proposition \ref{BSD}, and so on. 
We outline here a refined version of the prior Lemma that would allow us to avoid these points.

Suppose for finitely many places $V$ we specify a geodesic segment $\ell_v \ (v \in V)$ of length $1$
inside the Bruhat-Tits tree of $\PGL_2(F_v)$ containing $\OO_v^2$ (i.e., $\OO_v^2$ and one adjacent vertex). 
  Now define
the  valuation at $v \in V$ of a triple $\langle \alpha, \beta; g_f \rangle$ to be the distance between the geodesic from $\alpha_v$ to $\beta_v$
and the set of vertices of $g_v \ell_v$, i.e.
$$ \mbox{valuation at $v$} = \max(\mbox{ distance between $P$ and $[\alpha_v, \beta_v$] , for $P \in g_v \ell_v$}).$$ 
Thus the valuation is $0$ if and only if the segment $g_v \ell_v$ is contained 
in the geodesic from $\alpha_v$ to $\beta_v$.  At places outside $V$, the valuation is defined as before.

Then {\em with this refined notion  the same statement as in the Lemma still holds.}

The proof, however, is slightly more involved: In the proof above,  take $\mathcal{B}$
to consist of all places in $V$ together with all places where $g_v \notin \PGL_2(\OO_v)$. The problem
is that the set of $x$ such  that $\langle 0, x; g_f \rangle$ and $\langle x, \infty; g_f \rangle$
both have conductor indivisible by $v$, for $v \in V$, need not contain an open subset of $\mathbf{P}^1(F_v)$.  
 The problem arises when $ g_v \ell_v \subset [0,\infty]$. 
 
Call a modular symbol $\langle \alpha, \beta; g_f \rangle$  {\em good} if, for every
$v \in V$, the segment
 $g_v \ell_v$ is not contained in $[\alpha, \beta]$ for every $v \in V$. Thus, what the proof still gives is:
 \begin{equation}  \label{save} \mbox{A good modular symbol is the sum of two modular symbols with the desired properties} \end{equation}
where ``desired properties'' refers to the relevant divisibility statements for conductor and denominator.
 
 Now if a modular symbol -- without loss $\langle 0, \infty;  g_f \rangle$ is {\em not} good, then, for every $v \in V$, the set of $x \in \mathbf{P}^1(F_v)$ such that:
 \begin{itemize} 
\item $\langle 0,x; g_f \rangle$  has $v$-valuation $0$, and 
\item $\langle x, \infty; g_f \rangle$ is good
\end{itemize}
is open and nonempty. The above argument then works to show that we can write
$$\langle 0, \infty; g_f \rangle = \langle 0, x; g_f \rangle + \langle x, \infty; g_f \rangle$$
where $\langle 0, x; g_f \rangle$ has the desired divisibility properties,
and $\langle x, \infty; g_f \rangle$ is good. 
Then we are done by \eqref{save}.

\subsection{The proof of \eqref{Endgoal} assuming equivariant BSD} 
 
 Fix in what follows a symbol $\langle \alpha, \beta; g_f \rangle$
 with conductor $\mathfrak{f}$ and denominator $D=\varphi(\mathfrak{f})$; without loss of generality
 we can suppose $\alpha=0, \beta=\infty$.   We write
 $N$ for the norm of $\mathfrak{f}$. Also we can factorize $D = \prod_v D_v$ over places $v$ of $F$. 
 Finally we write $N_E = \mathrm{Norm}(\mathfrak{n})$ for the absolute conductor of the elliptic curve $E$.

\subsubsection{Normalizations}

Fix an additive character $\theta$ of $\adele_F/F$: for definiteness
we take the composition of the standard character of $\adele_{\Q}/\Q$
with the trace. Fix the measure on $\adele_F$ that is self-dual with respect to $\theta$,
and similarly on each $F_v$. 

For a function $\varphi$ on $\G(\Q) \backslash \G(\adele) \simeq
\PGL_2(F) \backslash \PGL_2(\adele_F)$ we define the Whittaker function $W_{\varphi}$
by the rule
$$W_{\varphi}(g) = \int_{x \in \adele_F} \theta(x) \varphi( \left( \begin{array}{cc} 1 & x \\ 0 & 1\end{array} \right) g)dx.$$

 Let $X = \left( \begin{array}{cc} 1/2 & 0 \\ 0 & -1/2 \end{array}\right)$
in the Lie algebra of the diagonal torus $\torus$  of $\mathfrak{pgl}_2$;
we will also think of it as
 an element in the Lie algebra of $\mathfrak{pgl}_2$. 
 
 For $y \in F$ or $F_v$, we set $a(y) = \left(  \begin{array}{cc} y & 0 \\ 0 & 1 \end{array} \right)$. 

Write $U_{\infty}= \torus(F_{\infty}) \cap K_{\infty}$. It is a maximal compact subgroup of $\torus(F_{\infty})$.

 On every $\torus(F_v)$, for $v$ finite, choose the measure $\mu_v$ which assigns
 the maximal compact subgroup mass $1$. 
 On the $1$-dimensional Lie group $\torus(F_{\infty})/U_{\infty}$ we put the measure 
 that is dual to the vector field $\underline{X}$ defined by $X \in \Lie(\torus)$, in other words, induced by a differential form dual to $\underline{X}$. 
 Finally, on $\torus(F_{\infty})$ itself, take the Haar measure which projects to the measure just defined on
 $\torus(F_{\infty})/U_{\infty}$. 

 The product measure $\mu = \prod_{v} \mu_v$  has been chosen to have the following property:  
  if $\nu$ is a $1$-form on the quotient $\torus(F) \backslash \torus(\adele_F)/ K_{\infty} U$
  for some open compact $U \subset \torus(\adele_{F,f})$, we have  
\begin{equation} \label{normalisation}
 \int_{\torus(F) \backslash \torus(\adele_F)/ U_{\infty} U} \nu =  \frac{1}{\mathrm{vol} (U)}
  \int_{\torus(F) \backslash \torus(\adele_F)} \langle \underline{X}, \nu \rangle d \mu. 
\end{equation}
Here is how to interpret the right-hand side:  $X$ defines a vector field
$\underline{X}$ on 
$ \torus(F) \backslash \torus(\adele_F)/ U_{\infty} U$;
pairing with $\nu$ gives a function, which we then pull back to $\torus(F) \backslash \torus(\adele_F)$
and integrate against the measure we have just described.   The volume $\vol(U)$ is measured
with respect to the measure $\prod \mu_v$ over finite $v$. 
Finally, the left-hand side requires an orientation to make sense; we orient
so that $\underline{X}$ is positive.

To prove \eqref{normalisation}, note  the $\nu$-integral is a sum of integrals over components. 
Each component is a quotient of $\torus(F_{\infty})/U_{\infty}$. 
 On each such components, the integral is (by definition) obtained by pushing forward the measure
 $\langle \underline{X}, \nu \rangle \mu_{\infty}$ to this quotient, and integrating. 
One also computes the right-hand side to induce the same measure on each component.

\subsubsection{Normalization of $T(X)$}  \label{TXdef}
 Let $ T \in \Hom_{K_{\infty}}(\mathfrak{g}/\mathfrak{k}, \pi)^{   K} $;
 here $\pi$ is the unique cohomological representation of level $\mathfrak{n}$
 as per our assumptions  (\S \ref{assump}) and, as in the previous section, $\mathfrak{g}$ and $\mathfrak{k}$
 are the Lie algebra of the groups $\G(F_{\infty})$ and its maximal compact subgroup. 
  Now $T$ defines a differential form on $Y_0(\mathfrak{n})$, which we call simply $\omega$.  
Put $T_X := T(X) \in \pi$; in our case it will be a factorizable vector $\bigotimes f_v$.

We normalize $T$ by requiring that the 
\begin{equation} \label{Wvdef}  \mbox{ Whittaker function $W_{T(X)}$ of $T(X)$ } = 
 \prod_v W_v,\end{equation}  where 
 $W_v$ is the new vector  of \cite{JPS} (in particular, $W_v(e)=1$ when $\theta_v$ is unramified) and  at $\infty$ we normalize by the requirement 
$$ \int_{ F_{\infty}^*} W_{\infty}(\begin{array}{cc}  y &0\\ 0 & 1 \end{array} ) dy= 1$$
where $dy$ is chosen to correspond to the measure on $\mathbf{A}(F_{\infty})$ fixed above
(a simple computation is necessary to check this is possible, since the integral might, {\em a priori}, always equal 0). 
By Rankin-Selberg and standard estimates,   we check that \begin{equation} \label{RS} \Norm(\mathfrak{n})^{-\varepsilon}   \ll   \frac{  \langle \omega, \omega \rangle_{L^2(Y_0(\mathfrak{n}))}  }{\vol(Y_0(\mathfrak{n}))}   \ll \Norm(\mathfrak{n})^{\varepsilon}. \end{equation} 
Indeed, all we need is polynomial bounds of this form, with lower bound $\Norm(\mathfrak{n})^{-A}$
and upper bound $ \Norm(\mathfrak{n})^A$ for a constant $A$ depending only on $F$; such bounds are given in \cite[eq. (10) and Theorem 5]{Brumley};
for the
case of $F=\Q$ the sharper lower bound is due to Hoffstein and Lockhart \cite{HL}, and that also contains references
for the sharper upper bound.

 \subsubsection{Adelic torus orbits versus modular symbols}
 We want to express the integral of $\omega$ over a modular  symbol $\langle 0, \infty; g_f \rangle$ 
 in terms of an adelic integral, similar to what was done in \eqref{pairing}. {\em We will assume
 that the conductor of $\langle 0, \infty; g_f \rangle$ is relatively prime to $\mathfrak{n}$.} 
 
Let $U = \torus(\adele_{F,f}) \cap g_f K g_f^{-1}$. 
Consider now the map 
 $$\torus(F) \backslash \torus(\adele_F)/U_{\infty} U \rightarrow Y(K)$$
 defined by  $t \mapsto  t g_f$. Its image can be regarded as a finite union of modular symbols
$\langle 0, \infty; t g_f \rangle$ where $t$ varies through representatives in $\torus(\adele_{F,f})$ for the group $Q  =  \torus(\adele_{F,f})/\torus(F) U. $
There is an exact sequence:
$$\mu_F/\mu_F \cap g_f K g_f^{-1} \rightarrow  \underbrace{ \torus(\widehat{\OO})/ U}_{ \mathrm{ size } \ =\mathrm{vol}(U)^{-1}} \rightarrow
 Q \rightarrow \mbox{ class group.}$$ 
where $\mu_F$ is the group of roots of unity and we regard it as a subgroup of $\torus(F) \simeq F^{\times}$ via $\mu_F \subset F^{\times}$. Call $w_F'$ the size of the group on the far left-hand side. So
 $ |Q| w_F'=       h_F  \vol(U) ^{-1}$.
%
Any character $\psi$ of $Q$ extends to a character of $[\torus ] = \torus(F) \backslash \torus(\adele_F)$ which is trivial at infinity and
it follows from \eqref{normalisation} that 
$$  \sum_{q \in Q} \psi (q) \int_{\langle 0, \infty; qg_f \rangle } \omega = \frac{ 1}{\vol(U)}  \cdot  \int_{\torus(F) \backslash \torus(\adele_F)} \psi (t) \langle (g_f)^*  \omega,  \underline{X}
 \rangle(t)   d\mu(t) $$ 
 where the right-hand side is intepreted in the same way as in \eqref{normalisation}; recall that $\omega$ has been defined in \S \ref{TXdef}.
`Fourier analysis' on the finite group $Q$ then gives:  
\begin{eqnarray}\nonumber \int_{\langle 0, \infty; g_f \rangle } \omega& =&  
\frac{1}{|Q|}\sum_{\psi \in \widehat{Q}}   \frac{ 1}{\mathrm{vol}(U)}  \cdot  \int_{[\torus]} 
  \psi (t)  \   \langle (g_f)^*  \omega,  \underline{X}
 \rangle(t)   d\mu(t) 
 \\ \nonumber &=& \frac{w_F'}{h_F}
\sum_{\psi \in \widehat{Q}}  \int_{[\torus]} 
   \psi(t)     \   \langle  (g_f)^* \omega,  \underline{X}
 \rangle(t) d\mu(t) \\ \label{buge0} 
  &=& \frac{w_F'}{h_F  } \sum_{\psi \in \widehat{Q} }  \int_{[\torus]} 
T_X(tg_f) \psi(t) d\mu
  \\   \label{buge} &=&    \frac{w_F'}{h_F  } \sum_{\psi}  L(\frac{1}{2}, \pi \times \psi)  \cdot \prod_{v}  \frac{I_v}{L_v(\frac{1}{2}, \pi \times \psi)}. \end{eqnarray}

%
      
Here $$I_v :=   \int_{y \in F_v^{\times}} W_v(a(y) g_v) \psi_v(y) dy,$$ where $g_v$ is the component at $v$ of $g_f$;
  $W_v$ is in \eqref{Wvdef}; and measures are as normalized earlier.     We have used  at step \eqref{buge} unfolding,  as in the theory of Hecke integrals \cite[\S 3.5]{Bump}.
  
   Let $S$ be the set of archimedean places, together with all places where 
  the conductor  of the symbol $\langle 0, \infty;  g_f \rangle $ is not $1$.  
  Let $S'$ be the set of finite places dividing $\mathfrak{n}$. 
  Because of our assumption, $S$ and $S'$ are disjoint. 
  
   Note that if $v \notin S \cup S'$
we have $g_v \in \torus(F_v) \cdot \PGL_2(\OO_v)$.
So  $\psi_v$ must be unramified for $I_v$ to be nonzero. 
       By choice of $W_v$ we have $I_v =  u_v \cdot L_v(\frac{1}{2}, \pi_v \times \psi_v)$
        whenever $v \notin S \cup S'$, where $u_v$ is an algebraic unit.

        For finite $v \in S$, 
the values of $W_v^{g_v}$ at least lie in $\overline{\Z}[\frac{1}{q_v}]$,
as one verifies by explicit computation. 
On  the other hand, the function $y \mapsto  W_v(a(y) g_v)$ is now constant on
each coset of $1+\mathfrak{q}_v^{n_v}$, where $n_v$ is the local conductor -- see discussion
just before \S \ref{denomavoid}; for the integral to be nonzero, then $\psi_v(y)$ must be
identically $1$ on $1+\mathfrak{q}_v^{n_v}$ and constant on each of its cosets. Each
of these cosets has measure $D_v^{-1}$, where $D_v$ is the local denominator.   

  Now  $J_v(s) := \frac{   \int_{F_v^*} W_v(a(y)g_v) \psi_v(y) |y|^s dy}{L(s+1/2, \pi_v \times \psi_v)}$ 
can be rewritten as $\int_{F_v^*} f(y) \psi_v(y) |y|^s dy$ where $f$ is a certain sum of translates of $W_v$. \footnote{Namely, 
write  $L(s+1/2, \pi_v \times \psi_v)^{-1}$ as $(1- \alpha_v \psi_v(\varpi_v) q_v^{-s}) (1-\beta_v  \psi_v(\varpi_v) q_v^{-s})$, 
where $\varpi_v$ is a uniformizer and $\alpha_v, \beta_v$ could be $0$, and then take
  $f(y)= (1- \alpha_v T) (1-\beta_v T) W_v(a(y)g_v)$, where $T$ is the operation which translates a function by $\varpi_v$.}
  But $J_v(s)$ is a polynomial in $q_v^{-s}$ (this again by the theory of Hecke integrals) and so $f_v(y)$ is compactly supported.
  Also, $\alpha_v, \beta_v \in \overline{\Z}[1/q_v]$. 
So $J_v(0)$ is actually a finite sum of elements, each lying in $\overline{\Z}[\frac{1}{q_v}] \cdot D_v^{-1}$.  This shows that 
   \begin{equation} \label{Ivdenominator}   D_v \cdot I_v \in   L_v(\frac{1}{2}, \pi_v \times \psi_v)  \overline{\Z}[\frac{1}{N }] \ (v \in S)\end{equation} 
 where $\overline{\Z}$ is the ring of algebraic integers,  $N$ is the  norm of the conductor of $\langle 0, \infty; g_f \rangle$, and $D_v$ the contribution of $v$ to the denominator of
 $\langle 0, \infty; g_f \rangle$.

Now for $v \in S'$.  Although in fact exactly the same reasoning that was just applied to $v\in S$ also applies to $v \in S'$, we will argue separately because we actually want a slightly more precise result for $v \in S'$, i.e. the set of primes dividing $\mathfrak{n}$,  with better denominator control.
Because $S \cap S' = \emptyset$ we have $g_v \in \torus(F_v) \cdot \PGL_2(\OO_v)$ for each $v \in S'$.   In particular, we may suppose that $g_v \in \PGL_2(\OO_v)$ while only modifying
  the value of $I_v$ by an algebraic unit.  By a direct computation with Steinberg representations we find
  that in fact , for $k_v \in \PGL_2(\OO_v)$ and $W_v$ the new vector for a Steinberg representation $\pi_v$, we have 
\begin{equation} \label{expl} \int W_v(a(y) k_v) \psi_v(y) d^* y \in   \frac{1}{q_v(q_v-1)} L(\frac{1}{2}, \pi_v \times \psi_v) \cdot  \overline{\Z} \end{equation}
 This is a matter of explicit computation, as we now detail:
 
 \begin{itemize}
 \item[(i)]    If $k_v $ belongs to $K_0(\mathfrak{n})$ this is clear. 
 \item[(ii)] Otherwise we can write
  $k_v =  \left( \begin{array}{cc} 1 & x \\ 0 & 1\end{array}  \right) \cdot w \cdot   k_v'$
  where $x \in \OO_v, k_v' \in K_0(\mathfrak{n})$ and $w=\left( \begin{array}{cc}  0 & 1\\ -1&0 \end{array}  \right)$. In that case
   we can rewrite the integral as $ \int W_v^{w_v}(a(y)) \theta_v(xy)  \psi_v(y) dy$. 
   The function $W_v^{w_v}(a(y))$ is supported on $v(y) \geq -1$ and its values are algebraic units when $v(y)=-1$
   (see  \cite[(11.14)]{SparseEqui}).    Moreover, it is invariant on each coset $y \OO_v^*$. 
   
   If $\psi_v$ is ramified 
    the only contribution comes from $v(y) = -1$, since on any other coset $y \OO_v^*$ with $v(y) \geq 0$
    both $W_v^{w_v}(a(y))$ and $\theta_v(xy)$ remain constant on that coset.   The integral  over $v(y)=-1$ then amounts to a Gauss sum; it belongs to
    $\frac{1}{q_v-1} \overline{\Z}$. 
    
    On the other hand, if $\psi$ is unramified, the value of the integral is -- by explicit computation 
    --  $\pm q_v^{-1} \cdot L_v(1/2, \pi_v \times \psi_v) + \frac{u}{q_v-1}$ where $u$ is an algebraic unit. 
    In fact, the term $\frac{u}{q_v-1}$ comes from $v(y)=-1$, and the remaining term $\pm q_v^{-1} \cdot L_v(1/2, \pi_v \times \psi_v) $
    comes from the contribution of $v(y) \geq 0$. 
    \end{itemize}

 We deduce that, {\em if the conductor  of $\langle 0, \infty; g_f \rangle$ is relatively prime to $\mathfrak{n}$}, 
\begin{equation} \label{Awai} \int_{\langle 0, \infty; g_f \rangle } \omega = \frac{1}{h_F   D \Norm(\mathfrak{n}) \varphi(\mathfrak{n}) } \sum_{\psi}  L(\frac{1}{2}, \pi \times \psi) \cdot a_{\psi},  \ \ a_{\psi} \in  \overline{\Z}[\frac{1}{N }]. \end{equation}  
 where $D=\prod D_v$ is the denominator of $\langle 0, \infty; g_f \rangle$,  
  every character $\psi$ that occurs on the right-hand side has conductor dividing $\mathfrak{n} \mathfrak{f}$, with $\mathfrak{f}$ the conductor of $\langle 0, \infty; g_f \rangle$;
  and $N = \mathrm{Norm}(\mathfrak{f})$.  Note also that the order of $\psi$ is bounded, as in 
  as in \eqref{orderupperbound}.
%


We will now apply equivariant BSD. We first normalize a period $\Omega_E$. Let $\Omega^1_{\mathcal{E}}$ be the $\OO_F$-submodule
of differential $1$-forms on $E$ which extend to a N{\'e}ron model; it's an $\OO_F$-module of rank $1$.  It will
be slightly more convenient for us to deal with $\Omega^1_{\mathcal{E}} \mathfrak{d}_F^{-1} := \Omega^1_{\mathcal{E}} 
\otimes \mathfrak{d}_F^{-1}$, where $\mathfrak{d}_F$ is the different of $F/\Z$.  (The reason why this is more convenient 
will become clear before \eqref{Formiso}.) We regard
it as a submodule of the $F$-vector space $\Omega^1$ of all differential $1$-forms. 
 Pick a $\Z$-basis $\xi_1,\xi_2$ for $\Omega^1_{\mathcal{E}} \mathfrak{d}_F^{-1}$ (we'll only use $\xi_2$ later). 
Now put \begin{equation} \label{Cperiod} \Omega_E = \left| \frac{1}{[\Omega^1_{\mathcal{E}} \mathfrak{d}_F^{-1}: \OO_F \xi_1]}   \int_{E(\C)} \xi_1 \wedge \overline{ \xi_1}\right|.\end{equation}  
This is independent of the choice of $\xi_1$.  For later usage, note the following: If $a = [\Omega^1_{\mathcal{E}} \mathfrak{d}_F^{-1}: \OO_F \xi_1]$,
then we have $\frac{\sqrt{-D_F/4}}{a} =\mathrm{Im}(\xi_2/\xi_1)$, by an area computation.

\begin{prop}\label{BSD} (Equivariant BSD conjecture; see \S \ref{eqBSDsec} for full discussion.)  Assume the equivariant BSD conjecture, in the formulation
\eqref{eqBSD}.
Let $E$ be a non-CM elliptic curve over the imaginary quadratic field $F$ of conductor $\mathfrak{n}_E$. Let $\Omega_E$ be
as in \eqref{Cperiod}.  
Let $\psi$ be  a character of $\adele_F^{\times}/F^{\times} F_{\infty}^{\times}$ of finite order $d$ and conductor $\mathfrak{n}_{\psi}$.
Suppose that $E$ has semistable reduction at all primes dividing $(\mathfrak{n}_E, \mathfrak{n}_{\psi})$.  Then
  we have 
 \begin{equation} \label{BSDconq} (d \Norm(\mathfrak{n}_{\psi}))^{2} \cdot  L(\frac{1}{2}, \pi \times \psi) \in  \overline{\Z}[\frac{1}{N_{\psi}' }] \cdot 
\frac{\Omega_E}{ | E(F_{\psi})_{\tors}|^2}, \ \ \   \end{equation}
where  $E(F_\psi)_{\tors}$ denotes the torsion subgroup of the points of $F$
over the abelian extension $F_{\psi}$ corresponding to $\psi$, and $N_{\psi}' = \prod_{\mathfrak{p}^2|\mathfrak{n}_{\psi}} \Norm(\mathfrak{p})$. \end{prop}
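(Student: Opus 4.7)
The plan is to deduce Proposition \ref{BSD} directly from \eqref{eqBSD}, which (in the Burns--Flach / Fontaine--Perrin-Riou style) predicts, for each character $\chi$ of $\Gal(F_\psi/F)$, a formula expressing $L(1, E, \chi)$ in terms of a $\chi$-isotypic period, Tate--Shafarevich, torsion, and local Tamagawa factors.  First, I align the automorphic and arithmetic normalizations: since $\pi$ is the automorphic form attached to $E$, the shift $s \mapsto s+1/2$ identifies $L(\tfrac{1}{2}, \pi \times \psi)$ with $L(1, E, \psi)$, the value at the center of the Artin-twisted Hasse--Weil $L$-function.  Without loss of generality, the $\psi$-isotypic part of $E(F_\psi)$ has rank $0$, since otherwise the $L$-value vanishes and the proposition is trivial.

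Next, applying \eqref{eqBSD} to $\psi$, I obtain an equality of the shape
\[ L(1, E, \psi) \;=\; \Omega_{\psi} \cdot \frac{|\Sha(E/F_\psi)^{\psi}| \cdot \prod_v c_v(\psi)}{|E(F_\psi)_{\tors}|^{2}} \cdot u, \]
with $u$ an algebraic unit, $\Omega_\psi$ the $\psi$-isotypic Deligne period, and the $c_v(\psi)$ local Tamagawa factors at places where $E$ or $\psi$ is ramified.  Here I have already relaxed the isotypic torsion denominator to the weaker $|E(F_\psi)_{\tors}|^{2}$ permitted by \eqref{BSDconq}; this loses only algebraic units.  Over the imaginary quadratic field $F$, one relates $\Omega_\psi$ to the period $\Omega_E$ of \eqref{Cperiod} by a Gauss-sum factor: schematically $\Omega_\psi = d^{-1}\tau(\psi) \Omega_E$, with $|\tau(\psi)|^{2} = \Norm(\mathfrak{n}_\psi)$.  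Multiplying through by $(d \Norm(\mathfrak{n}_\psi))^{2}$ absorbs both the Gauss sum (using $\tau(\psi)\overline{\tau(\psi)} = \Norm(\mathfrak{n}_\psi)$) and the projector normalization $d$, so the left-hand side of \eqref{BSDconq} becomes an algebraic number times $\Omega_E/|E(F_\psi)_{\tors}|^{2}$.

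Third, I must track which primes can appear in the denominator of that algebraic number.  Since $\Sha$ is an integer, the only possible denominators come from the local factors $c_v(\psi)$ and from the ratio $\Omega_\psi/\Omega_E$.  The semistability hypothesis at primes dividing $\gcd(\mathfrak{n}_E,\mathfrak{n}_\psi)$ forces $c_v(\psi)$ there to divide $\Norm(\mathfrak{p})-1$, which is a unit at $\mathfrak{p}$.  Hence denominators can only come from primes where $\psi$ is wildly ramified, i.e.\ those with $\mathfrak{p}^{2}\mid \mathfrak{n}_\psi$ --- through imperfect cancellation between the local factor of $\tau(\psi)$ and the local Tamagawa term --- and these are exactly the primes inverted in $N_\psi'$.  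The principal obstacle is the careful bookkeeping of these local contributions and matching the precise version of the conjecture formulated in \S \ref{eqBSDsec} so that the Galois equivariance of period, torsion, and Tamagawa factors combine to give exactly the denominator of \eqref{BSDconq} and no worse.
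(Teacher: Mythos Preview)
Your proposal is a plausible heuristic but does not constitute a proof, and it does not engage with the specific formulation \eqref{eqBSD} that the paper actually uses.

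The paper's \eqref{eqBSD} is \emph{not} a generic Burns--Flach statement about $\psi$-isotypic periods and Gauss sums; it is a hand-built formula for the abelian variety $B = \ker(\Phi_d:A\to A)^0$ inside $A = \Res_{F_\psi/\Q}E$, with all quantities taken as $S$-modules for the Dedekind ring $S = \Z[\mu]/(\Phi_d)$. Your displayed formula with $\Omega_\psi$, $\Sha(E/F_\psi)^\psi$, and $c_v(\psi)$ is not what \eqref{eqBSD} says, so ``applying \eqref{eqBSD}'' does not yield it. The actual derivation requires: (a) an explicit choice of bases $e_1,e_2$ for $H^1(B(\C),\R)_+$ and $\nu_1,\nu_2$ for $\Omega^1_{\mathcal B}$; (b) a computation of the determinant $\lambda \in S_\R$ via an exterior-product argument, which produces $\widetilde\lambda = \sqrt{-\Delta_F}\,[x]\overline{[x]}\,\Omega_E$ for a carefully chosen $x \in \mathfrak d_{F_\psi/F}^{-1}$; and (c) control of the quotient $\Omega^1_{\mathcal B}/(S\nu_1+S\nu_2)$ via the map $\Theta_d:A\to B$.

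Consequently the two factors $d^2$ and $\Norm(\mathfrak n_\psi)^2$ arise for reasons different from the ones you give. The $d^2$ comes from bounding the cokernel of $\Omega^1_{\mathcal A}\to\Omega^1_{\mathcal B}$ by $[S/\Theta_d^2]$ together with the divisibility \eqref{diffdiv}, not from a projector normalization. The $\Norm(\mathfrak n_\psi)^2$ comes from showing $\Norm(\mathfrak n_\psi)\cdot x$ is integral above $\ell$ (a different-ideal computation for tame extensions), not from $|\tau(\psi)|^2 = \Norm(\mathfrak n_\psi)$. The restriction to primes away from $N_\psi'$ enters because one needs a normal integral basis for $\mathfrak d_{F_\psi/F}^{-1}$ locally at $\ell$, which is supplied by Ullom's theorem precisely when $F_\psi/F$ is tamely ramified above $\ell$; this is why wild primes must be inverted. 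Your treatment of Tamagawa numbers is also off: in the paper $\Sha_B\cdot\prod_v c_v(B)$ sits in the numerator and contributes only to integrality, and the semistability hypothesis is used not to bound $c_v$ but to ensure that the connected N\'eron model of $E$ over $\OO_{F_\psi}$ is the base change of that over $\OO_F$, which underlies the identification \eqref{Formiso} of $\Omega^1_{\mathcal A}$.
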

 
 Note that the elliptic curve $E$ in our context does have semistable reduction at every place, because its conductor is squarefree,
 so we can freely apply this result.

 \subsection{Proof of Theorem \ref{alltogether}} \label{finalproof}
 We now collect together what we have shown, in order to complete the proof.
 As above, $\omega$ is a differential $1$-form of level $\mathfrak{n}$ belonging to the automorphic representation $\pi$.

Fix a prime $\mathfrak{l}$ of $\overline{\Z}$ above a prime $\ell$ of $\Z$. 
 Let $\overline{\Z}_{\mathfrak{l}}$ consist of algebraic integers with valuation $\geq 0$ at $\mathfrak{l}$. 
 Also, let  $F_{\ell}$ denote the largest abelian extension of $F$ that is unramified  at all primes above $\ell$
 if $\ell$ is relatively prime to $\mathfrak{n}$. Otherwise, let $F_{\ell}$ be the largest abelian extension of $F$
 that is at worst tamely ramified at primes of $F$ above $\ell$. 
%
Begin with $\int_{\gamma} \omega$ for arbitrary $\gamma \in H_{1,BM}$, and use the Lemma 
 of \S \ref{denomavoid}  to write $\gamma$ as a sum of symbols $\langle 0, \infty; g_f \rangle$,
 where the conductor of each symbol is relatively prime to $N_E \ell$, and the denominator
 is prime to $\ell$ (or divisible by at most $\ell^A$ if $\ell \leq 5$).

 Now \eqref{Awai} writes $\int_{\gamma} \omega$ as a sum of $L$-values
 $L(\frac{1}{2}, E \times \psi)$, where the $\psi$s which occur have conductor 
 dividing $\mathfrak{n} \cdot \mathfrak{f}$, where $\mathfrak{f}$ is prime to $N_E \ell$.  
 In particular, for any prime $\mathfrak{l}$ above $l$, 
 the square $\mathfrak{l}^2$ doesn't divide the conductor of $\psi$.  
\footnote{Note that, using  \S \ref{fix}, the situation can be simplified in the following way:  Take $\ell_v$ of \S \ref{fix}
 to be the set of vertices fixed by $K_0(\mathfrak{n})$. Then \S \ref{fix}
 allows us to write $\gamma$ as a sum of symbols 
  $\langle 0, \infty; g_f \rangle$,
 where the ``refined'' conductor of each symbol is relatively prime to $N_E \ell$, and with
controlled denominator as above. Now, \eqref{Awai} writes
$\int_{\gamma} \omega$ as a sum of $L$-values
 $L(\frac{1}{2}, E \times \psi)$, where the $\psi$s which occur have conductor 
 relatively prime to $N_E \ell$ also. 
  In particular, we can assume that $\psi$ and $E$ have relatively prime conductor -- simplifying our later discussion.
   
  The reason is the following:
 For any symbol $\langle 0, \infty; g_f \rangle$, refined valuation $0$ actually means
 that $g_v K_0(\mathfrak{n})_v g_v^{-1}$  contains the maximal compact subgroup of $\mathbf{A} (F_v)$. 
 In particular -- 
  looking
 above \eqref{buge} -- if $v$ is any place of ``refined'' valuation $0$,  the vector $W(a(y) g_v)$ is actually invariant by $y \in \OO_v^*$,
and then $\psi_v$ must actually be unramified for the local integral $I_v$ to be nonzero. So, in the above reasoning,
the only $\psi$s that occur have conductor  relatively prime to $N_E \ell$, because the modular symbols
which occur had ``refined'' conductor relatively prime to $N_E\ell$.  }

   Combine  \eqref{orderupperbound}, 
 \eqref{BSDconq} and   \eqref{Awai}  to arrive at: 
$$  \int_{\gamma} \omega    \in   \frac{1} 
{(30 h_F  \Norm(\mathfrak{n}) \varphi(\mathfrak{n})  )^B} \cdot \frac{\Omega_E}{  | E(F_{\ell})_{\tors}|^2} \cdot  \overline{\Z}_{\mathfrak{l}},$$
for some absolute constant $B$.

Set $$M = \prod_{\ell} \# E(F_{\ell})[\ell^{\infty}] \mbox{ and } M' =(30 h_F \Norm(\mathfrak{n}) \varphi(\mathfrak{n})  )^B ;$$ 
$M$ is finite because $E(F^{\ab})_{\tors}$ is finite -- that is
a simple consequence of Serre's open image theorem (see e.g. \cite{S04}), using the fact that $E$ does not have CM.
Then 
$$  \int_{\gamma} \omega    \in   \frac{1} 
{M'} \cdot \frac{\Omega_E}{ M^2 } \cdot  \overline{\Z}_{\mathfrak{l}},$$
and beause this is true for all $\mathfrak{l}$ we get 
$$  \int_{\gamma} \omega    \in   \frac{1} 
{M'} \cdot \frac{\Omega_E} { M^2} \cdot  \overline{\Z}. $$


 Thus, if we set
 $$\omega' =   \Omega_E^{-1} M' M^2  \cdot \omega$$
 the form $\omega'$ has integral periods, i.e. $\int_{\gamma} \omega' \in \Z$ for all $\gamma$. 
Our desired result (Theorem \ref{alltogether}) follows from \eqref{RS} together with the bounds:
\begin{equation} \label{moo2} \Omega_E^{-1} \mbox{ and } M  \ll_F A \left( \Norm \mathfrak{n}\right)^B
 \end{equation}
for absolute constants $A, B$. 

We now explain how to check \eqref{moo2}.

For $\Omega_E$, one uses the relationship
with the Faltings height, together with Szpiro's conjecture and Frey's conjecture, see \cite[F.3.2]{HS}.
As commented in that reference, these conjectures are, up to the exact value of the constants involved, equivalent to the ABC conjecture. 
Then, up to constant factors, $\Omega_E^{-1/2}$
 coincides with the exponential of the Faltings height; conjecture \cite[F.3.2]{HS} now yields
 \eqref{moo2}, using also the result stated in \cite[Exercise F.5(c)]{HS}. 

Now let us examine  $M$. 

\begin{itemize}
\item For $\ell > 3$ and $\ell$ relatively prime to $\mathfrak{n}$, 
$E(F_{\ell})[\ell]$ must be either trivial or cyclic, because   
if $E(F_{\ell})[\ell]$ were all of $E[\ell]$, that means that inertia groups  $I_{\mathfrak{l}} \subset \Gal(\bar{F}/F)$
for any prime $\mathfrak{l}$ of $F$ above $\ell$ would act trivially on $E[\ell]$.
But this is never true,
because the determinant of this action is the mod $\ell$ cyclotomic character.
which is nontrivial.
\item  For $\ell =2,3$ or $\ell$ dividing $\mathfrak{n}$, 
 we see similarly that $E(F_{\ell})[\ell^3]$  cannot be of
 $E[\ell^3]$.  
 \end{itemize}
 Write $Q = (6 \mathrm{Norm}(\mathfrak{n}))^2$. 
 
 So  $Q \cdot E(F_{\ell})[\ell^{\infty}]$
is cyclic for every $\ell$.   Write 
 $K $ for the subgroup of $E(F^{\ab})_{\tors}$
 generated by all the $Q \cdot E(F_{\ell})[\ell^{\infty}]$. As we have just seen,
 $K$ is a cyclic subgroup of order $\geq M/Q^2$, and it is stable by the Galois group 
 of $\bar{F}/F$. Consider the isogeny $\varphi: E \rightarrow E' := E/K$. 
  Masser-W{\"u}stholz (see e.g. the main theorem of
 \cite{MW})  give an isogeny
 $\varphi': E' \rightarrow E$ in the reverse direction, whose degree is  bounded by a polynomial in
 the Faltings height of $E$. The composite isogeny $\theta = \varphi' \circ \varphi: E \rightarrow E$
 must be multiplication by an integer $r$,  because $E$ does not have CM, and also $\# K $ divides $r$
 because $\theta(K) = 0$ and $K$ is cyclic. 
From $$(\# K)  \cdot \deg \varphi' = r^2$$
 we get the desired bound: $M \leq Q^2 \deg \varphi'$.


%
%
%
%

\section{The equivariant conjecture of Birch and Swinnerton--Dyer} \label{eqBSDsec} 
 
In the previous section we used   Proposition \ref{BSD}  which says (see that section for notation):
\begin{quote}
Assume equivariant BSD. Let $E$ be an elliptic curve over the imaginary quadratic field $F$ of conductor $\mathfrak{n}_E$.
Let $\psi$ be  a character of $\adele_F^{\times}/F^{\times} F_{\infty}^{\times}$ of finite order $d$ and conductor $\mathfrak{n}_{\psi}$.
We assume that $E$ has semistable reduction at every prime dividing $(\mathfrak{n}_{\psi}, \mathfrak{n}_E)$. 
Put $N_{\psi}'=\prod_{\mathfrak{p}^2|\mathfrak{n}_{\psi}} \Norm(\mathfrak{p})$. 
 \begin{equation} \label{BSDconq2} (d \mathrm{Norm}(\mathfrak{n}_{\psi}))^2 \cdot  L(\frac{1}{2}, \pi \times \psi) \in  \overline{\Z}[\frac{1}{N_{\psi}' }] \cdot 
\frac{\Omega_E}{ | \# E(F_{\psi})_{\tors}|^2}, \ \ \   \end{equation}
where $N_E, N_{\psi}$ are the respective norms of $\mathfrak{n}_E, \mathfrak{n}_{\psi}$; and $F_{\psi}$ is the abelian extension
determined by $F$, and $\Omega_E$ is the period normalized as in \eqref{Cperiod}.     \end{quote} 

This was quoted as a consequence of the ``equivariant Birch/Swinnerton-Dyer conjecture.''  
Unfortunately, there is no standardized form of such a conjecture in the literature, to our knowledge, in the generality we need it. That is why we have written the current section \S \ref{eqBSDsec}, to spell out exactly
what we mean and how it gives rise to \eqref{BSDconq2}.  
We have chosen to directly formulate an equivariant BSD conjecture in \eqref{eqBSD} 
in a way that directly mirrors the formulation given by  Gross \cite{Gross} for CM elliptic curves.  
In principle, this should be routinely verifiable to be equivalent
to the equivariant Tamagawa number conjecture of \cite[\S 4]{ETNC}, although we did not attempt to verify the details. 
 In summary, when we say ``equivariant BSD'' in this paper, we {\em mean} the conjecture
 that is formulated in \eqref{eqBSD} below; and we anticipate, but have not verified,  that this can be verified to be compatible
 with \cite{ETNC} in a routine fashion.  
  
Here is the basic idea.  To understand $L(\frac{1}{2}, E \times \psi)$ as below
one needs to understand the $L$-function of $E$ over a certain abelian extension $F_{\psi}$, 
i.e. the $L$-function of abelian variety $\mathrm{Res}_{F_{\psi}/F} E$, but {\em equivariantly}
for the action of the Galois group $G$ of $F_{\psi}$ over $F$. One difficulty encountered is that $\Z[G]$
is not a Dedekind ring.  This issue comes up in other work on the subject \cite{Bley}. Of course, our goal is much less precise,
since we may lose arbitrary denominators at $N_{\psi}'$ and also some denominator at $d$.  In any case we deal with this by instead passing to an abelian subvariety
of $A$ which admits an action of a Dedekind quotient of $\Z[G]$.  As a simple example, if $\psi$ is a quadratic character,
one can analyze $L(\frac{1}{2}, E \times \psi)$ as the $L$-function of a quadratic twist of $E$, rather than
working with $E$ over the quadratic extension defined by $\psi$. 

In the actual derivation we will try to write formulas that are as explicit as possible. 
We write for short $\Z' = \Z[\frac{1}{  N_{\psi}'}]$
and if $M$ is a $\Z$-module we sometimes write $M'$ for $M \otimes_{\Z} \Z'$. 

%
%
%

%
%

\subsection{Basic setup }

Choose a prime $\ell$ that doesn't divide $  N_{\psi}'$
 and extend the valuation at $\ell$ to a valuation of $\overline{\Q}$. 
 We will prove  \eqref{BSDconq2} ``at $\ell$,'' i.e. verify that the $\ell$-adic valuation
 of the ratio $\frac{\mathrm{LHS}}{\mathrm{RHS}}$ behaves as predicted.

 We regard $F$ as a subfield of $\C$, i.e. we choose a fixed embedding $\iota$ of $F$ into $\C$. 
 When we write $E(\C)$ we understand it as the complex points of $E$ considered as a complex variety via $\iota$.
 
  Let $F_{\psi}$ be the abelian field extension of $F$ determined, according to class field theory, by the kernel of $\psi$.
   Note that $F_{\psi}$ is tamely ramified above $F$ at all primes above $\ell$,
 because any prime $\mathfrak{l}$ above $\ell$ divides $\mathfrak{n}_{\psi}$ with multiplicity $\leq 1$.

  Let $\mu$ be the cokernel of $\psi$ (so that $\psi$ 
gives an isomorphism of $\mu$ with  a cyclic subgroup of $\C^{\times}$); thus
 $\Gal(F_{\psi}/F) \simeq \mu$. 
  We   fix an extension of $\iota$ to an embedding $\sigma_1: F_{\psi} \rightarrow \C$;
  for $\alpha \in \mu$ we put $\sigma_{\alpha} = \sigma_1 \circ \alpha$. 
   
   \subsection{Background on cyclotomic rings} \label{Cycback}

The size of $\mu$ is $d$, i.e. the order of $\psi$; 
and let $R = \Z[\mu]$ be the group algebra of $\mu$, so that $\psi$
gives an algebra homomorphism $\psi: R \rightarrow \C$. 
For $a \in R$ we will sometimes write $a^{\psi}$ instead of $\psi(a)$;
we will also use this notation for $a \in R_{\R} := R \otimes \R$
(i.e. $a^{\psi}$ is the value, in $\C$, of the real-linear extension $\psi: R_{\R} \rightarrow \C$).

Choose a generator $\zeta$ for $\mu$. 
Let $\phi_d \in \Z[x]$ be the $d$th cyclotomc polynomial 
and $\theta_d = \frac{x^d-1}{\phi_d} \in \Z[x]$. 
Let $\Phi_d = \phi_d(\zeta)$ and $\Theta_d =\theta_d(\zeta)$
be the elements of $R$ obtained by evaluating these at $\zeta$. 
Note that $\Theta_d \Phi_d = 0$ in $R$.

Set  $$S = R/(\Phi_d).$$ Then $S$ is a Dedekind ring, isomorphic to the ring of integers
in the $d$th cyclotomic field, and the homomorphism $\psi: R \rightarrow \C$ then factors through $S$. 
 Note that by differentiating
\begin{equation}  \label{diffdiv} \mbox{ image in $S$ of  $(\Theta_d \cdot \phi_d'(\zeta))$} =  \frac{d}{dx} (x^d-1) |_{x=\zeta} = d \zeta^{-1}, \end{equation}
where this equality is in $S$. 
This shows that $d$ is divisible, in $S$,  by the product of $\Theta_d$
and $\phi_d'(\zeta)$. Note that the image of $\phi_d'(\zeta)$
in $S$ is exactly the different of $S$ over $\ZZ$.

Consider the abelian category $\Stor$ of finite $S$-modules: modules that are finite as abelian groups.
Then the rule $S/\mathfrak{n} \mapsto \mathfrak{n}$ gives an isomorphism
\begin{equation} \label{moduleinequality} K_0(\Stor) \simeq \{ \mbox{fractional ideals of $S$.} \}. \end{equation}  We use $[X]$ to denote the 
 (fractional) ideal corresponding to a torsion $S$-module $X$, and write
 $[X] \geq [Y]$ if the ideal for $X$ is divisible by the ideal for $Y$. 
 We write $[X] \geq_{\ell} [Y]$ if this holds ``at $\ell$,'' i.e.
 the valuation at any prime $\mathfrak{l}$ above $\ell$
 for $[X]$ is $\geq$ the same for $[Y]$. 
 
 If $[X]$ corresponds to a {\em principal} ideal, we will say that $X$ is ``virtually principal.'' By an abuse of notation, we may regard then $[X]$ as an element of $ S_{\Q}^*/S^*$, namely,
 a generator for that principal ideal.  Here we have written $S_{\Q}$ as an abbreviation for $(S \otimes \Q)$. 
 
Involutions: We denote by $x \mapsto x^*$ the involution of $R$ that is induced by inversion on $\mu$.
This descends to the canonical complex conjugation on the CM-field $S$. 
Later we also consider
the ``complex conjugation''   $x \mapsto \bar{x}$ on $R_{\C} := R \otimes_{\Z} \C$  arising
from the conjugation of $\C/\R$.

Given $x \in F_{\psi}$ we define $[x] \in R_{\C}$ by 
$$ [x] = \sum_{\alpha} \sigma_{\alpha}(x) \alpha^{-1}.$$
The morphism  $x \mapsto [x]$ is actually equivariant for the action
of $\OO_F[\mu]$, which acts on $F_{\psi}$ by linear extension of the $\mu$-action
and acts on $R_{\C}$ via the map $\OO_F[\mu] \rightarrow R_{\C}$
 induced from the natural embedding $\mu \rightarrow R$ and 
 the inclusion $\iota: \OO_F \hookrightarrow \C$. 
 

%
%

%

 \subsection{The abelian varieties and their  N{\'e}ron models}

We put  $$ A = \mathrm{Res}_{F_{\psi}/\mathbf{Q}} E.$$
 Then $A$ is a $2d$-dimensional abelian variety which admits an action of $\mu \simeq \Gal(F_{\psi}/F)$ and so also of the algebra $R$. 
 Consider the $2 \varphi(d)$-dimensional abelian variety
 $B$ which is given by the connected component of the kernel of $\Phi_d $ acting on $A$:
 $$B = \mathrm{ker}( \Phi_d: A \rightarrow A)^0.$$
 Then the action of $R$ on $B$ factors through $S$. 
 Also 
  $\Theta_d $ gives a surjection of abelian varieties $A \rightarrow B$. 
%

Denote by $\mathcal{E}$  the N{\'e}ron model of $E$ over $\OO_F$, 
and by $\mathcal{B}$ the N{\'e}ron model of $B$ (now over $\Z$)
and finally $\mathcal{A}$ that of $A$ (also over $\Z$).  

Denote by $\mathrm{Lie}(\mathcal{E})$ the tangent space to $\mathcal{E}$ above the identity section,
and $\Omega^1_{\mathcal{E}}$ its $\OO_F$-linear dual; these are both locally free $\OO_F$-modules of rank one. 
We use similar notation for $\mathcal{A}$ and $\mathcal{B}$; in that case, they are free $\Z$-modules
of rank $2d$ and $2\varphi(d)$ respectively. 
Thus, e.g. $\mathrm{Lie}(\mathcal{E})$ is the set of $\Spec \left( \OO_F[\varepsilon]/\varepsilon^2 \right) $-valued
points of $\mathcal{E}$ that extend the identity section $\Spec \OO_F \rightarrow \mathcal{E}$.


The connected N{\'e}ron model of $E \otimes_F F_{\psi}$ over $\OO_{F_{\psi}}$
 coincides  with the base-change of  the connected N{\'e}ron model for $\mathcal{E}$.   
Indeed, the  universal property gives a map from $\mathcal{E} \otimes_{\Z} \OO_{F_{\psi}}$
 {\em to} this N{\'e}ron model, and this is an open immersion. We check this after localizing at each prime $\mathfrak{p}$:
 \begin{itemize}
\item[-] If  a prime $\mathfrak{p}$ of $F$   doesn't divide $\mathfrak{n}_{\psi}$,
 this is the commutation of N{\'e}ron models with unramified base change \cite[Theorem 1, Chapter 7]{Neronmodels}.
 \item[-]  If  a prime $\mathfrak{p}$ of $F$   {\em does} divide $\mathfrak{n}_{\psi}$ then
 by assumption $E$ has semistable reduction at $\mathfrak{p}$, and the result is known \cite[Prop 3, Chapter 7]{Neronmodels}. 
 \end{itemize}

Now 
it is known (\cite[Proposition 4.1]{Edixhoven}) that
$\mathcal{A}$ is the restriction of scalars $\Res_{\OO_{F_{\psi}}/\Z} $ for the
N{\'e}ron model of $E$ over $\OO_{F_{\psi}}$.
  Using the description of Lie algebra noted above and the defining property of restriction of scalars we see that
$$\mathrm{Lie}(\mathcal{A}) =    \left( \mathrm{Lie}(\mathcal{E}) \otimes_{\OO_{F}} \OO_{F, \psi} \right)$$

We obtain the respective $\Omega^1$ spaces by dualizing.   
  Now the $\Z$-dual of a locally-free $\OO_F$-module $M$ is isomorphic
  to $\Hom_{\OO_F}(M, \OO_F) \otimes_{\OO_F} \mathfrak{d}_F^{-1}$, where $\mathfrak{d}_F$ denotes the different.
The pairing $x \in M, y \otimes \delta \in \Hom_{\OO_F}(M, \OO_F) \otimes_{\OO_F} \mathfrak{d}_F^{-1} \mapsto
\mathrm{trace}_{F/\Z}(  \langle x,y \rangle \delta )$ induces this isomorphism. 
Thus --    \begin{eqnarray}\nonumber  \Omega^1_{\mathcal{A}/\Z} =\Hom_{\Z}(\mathrm{Lie}(\mathcal{A}), \Z)   
=  \Hom_{\Z}( \mathrm{Lie}(\mathcal{E}) \otimes_{\OO_{F}} \OO_{F_ {\psi}} , \Z)  \\ =\nonumber
\left( \Hom_{\OO_{F_{\psi}}}( \mathrm{Lie}(\mathcal{E}) \otimes_{\OO_{F}} \OO_{F_{\psi}} , \OO_{F_{\psi}})  \otimes_{\OO_{F_{\psi}}} \mathfrak{d}_{F_{\psi}}^{-1}  \right)   
\\ =  \left( \Hom_{\OO_{F}}( \mathrm{Lie}(\mathcal{E}),  \OO_{F_{\psi}})  \otimes_{\OO_{F_{\psi}}} \mathfrak{d}_{F_{\psi}}^{-1}  \right) 
\\ \label{Formiso2} =  
    \left( \Omega^1_{\mathcal{E}}  
 \otimes_{\OO_{F}}  \mathfrak{d}_{F_{\psi}}^{-1} \right)
  \\ \label{Formiso} =  
    \left( \Omega^1_{\mathcal{E}}   \mathfrak{d}_F^{-1}
 \otimes_{\OO_{F}}  \mathfrak{d}_{F_{\psi}/F}^{-1} \right) \end{eqnarray}
 
 To be more precise,  there is a natural map $\Omega^1_{\mathcal{E}} \otimes_{\OO_F} \mathfrak{d}_{F_{\psi}}^{-1}
 \rightarrow \Omega^1_A$, and the assertion is that the image consists of $1$-forms on $A$ which  extend to the N{\'e}ron model. 
 In the last equations, $\mathfrak{d}_{F_{\psi}/F}$ denotes the relative different
 and we used transitivity of the different, which means $\mathfrak{d}_{F_{\psi}} \simeq \mathfrak{d}_{F_{\psi}/F} \otimes_{\OO_F} \mathfrak{d}_F$.

Our next order of business is to get some understanding of $\Omega^1_{\mathcal{B}}$.
There's a natural morphism $\Omega^1_{\mathcal{A}} \rightarrow \Omega^1_{\mathcal{B}}$
induced by $B \hookrightarrow A$,
and we want to put an upper bound on the size of the cokernel.  To do so
we examine the morphism $A \rightarrow B$ given by ``multiplication by $\Theta_d$.'' 
The composite $B \rightarrow A \rightarrow B$ is given by multiplication by $\Theta_d$ on $B$; that shows us that
$$ \Theta_d  \Omega^1_{\mathcal{B}} \subset \mbox{ image of $\Omega^1_{\mathcal{A}}$}.$$   
  
   Note that $\Omega^1_{\mathcal{B}}$ is a locally free $S$-module of rank $2$. (In fact, it is a free $\Z$-module
   of rank $2 \varphi(d)$, and if we tensor $\otimes_{\Z} \C$  we get   a free
   $S \otimes \C$ module of rank $2$.     From there we see that $\Omega^1_{\mathcal{B}}$
   is contained with finite index in a free $S$-module, which easily implies it is locally free.)

So in the exact sequence
\begin{equation} \label{OmSr} \Omega^1_{\mathcal{A}} \rightarrow \Omega^1_{\mathcal{B}} \rightarrow C,\end{equation}
   the cokernel $C$  has the property that $[C] \leq  2[S/\Theta_d]$;
   in particular (see \eqref{diffdiv}) $[C] \leq [S/d^2]$. 

%
%
%
%
%
%
%
%
%
%
%
%
%

    \subsection{The homology of $A(\C)$.} \label{Before} 
     Note that $$A(\C) = E(F_{\psi } \otimes_{\Q} \C) = \bigoplus_{\sigma: F_{\psi} \rightarrow \C} E^{\sigma}(\C).$$
     The set of $\sigma$s which occur is the set $\sigma_{\alpha} \ (\alpha \in \mu)$
defined earlier,     together with their conjugates $\overline{\sigma_{\alpha}} \ (\alpha \in \mu)$.

            Choose generators $\gamma_1, \gamma_2$ for $H_1(E^{\sigma_1}(\C))$
 and let $\overline{\gamma_1}, \overline{\gamma_2}$
 be their images under the antiholomorphc map $E^{\sigma_1} \rightarrow E^{\overline{\sigma_1}}$.  
  A free $R$-basis for $H_1(A(\C), \Z)$ is given by $ \gamma_1,   \gamma_2,  \overline{\gamma_1},
  \overline{\gamma_2}$.  The complex conjugation of $\C/\R$ 
  induces an antiholomorphic involution  of $A(\C)$; that involution switches $\gamma_i$ and $\overline{\gamma_i}$. 
Later we will also set  $\delta_i = \gamma_i + \overline{\gamma_i}$.  
 
Now $H_1(B(\C), \Z)$ is given by the kernel of $\Phi_d$ acting on $H_1(A(\C), \Z)$. 
Since the latter is free, as $R$-module on  $ \gamma_1,   \gamma_2,  \overline{\gamma_1},
  \overline{\gamma_2}$, it follows that $H_1(B(\C), \Z)$
  is free as $S$-module on the same generators multiplied by $\Theta_d$. 
  
  (In fact, the kernel of $\Phi_d : R \rightarrow R$ is just $R \Theta_d$, 
  and is free of rank $1$ as an $S$-module:  regard $R = \Z[x]/(x^d-1)$; if the class of $f(x)$ is killed by $\phi_d$, 
   then $(x^d-1)$ divides $f(x) \cdot \phi_d$, so that $\theta_d$ divides $f$.)

 \subsection{Torsion subgroups}
 
 Later we will need to understand the torsion subgroups of both $B(\Q)$ and $\hat{B}(\Q)$ where $\hat{B}$ is the dual abelian variety.
 
    Clearly $B(\Q)_{\tors} \subset A(\Q)_{\tors} = E(F_{\psi})_{\tors}$. 
    To bound torsion in $\hat{B}$ note that 
    we have a map $ \Theta_d : A \rightarrow B$ and thus also a dual map $\widehat{\Theta_d}: \hat{B} \rightarrow \hat{A}$.
 We compute the kernel of $\widehat{\Theta_d}$ over $\C$: it is dual, as an abelian group, to the cokernel of
 $$\Theta_d: H_1(A(\C), \Z) \rightarrow H_1(B(\C),\Z),$$
 but this is trivial, as we have seen. 
  Thus also $\hat{B}(\Q)_{\tors}$ is isomorphic to a subgroup of $\hat{A}(\Q)_{\tors} = E(F_{\psi})_{\tors}$
 ($A$ carries a principal polarization and so is isomorphic to $\hat{A}$). 

  \subsection{Integration}

By integration we get a mapping
\begin{equation} \label{Intmap} f:   \Omega^1_{\mathcal{B}}  \otimes \R  \stackrel{f}{\longrightarrow}   \left( H^1(B(\C), \R)_+ \right), \end{equation} 
where on the right hand side the subscript $+$ denotes {\em coinvariants}
of complex conjugation  considered
as an antiholomorphic involution of $B(\C)$. We can regard
the right-hand side as the $\R$-dual of $H_1(B(\C), \R)^+$, the conjugation-invariants on homology,  and then the map is $\omega \mapsto \int_{\gamma} \omega$
for $\gamma \in H_1(B(\C), \R)^+$. 
  This
is an isomorphism of free $\R$-modules, both of rank $r= 2 \varphi(d)$. 

Both sides have integral structures:  On the left-hand side $\Omega^1_{\mathcal{B}}$.
On the right-hand side we put the integral structure that is the image of $H^1(B(\C), \Z)$. 
Thus we can compute the ``period determinant'' of \eqref{Intmap}, well-defined up to sign.  That determinant is given by the volume
\begin{equation} \label{Brvo}\Lambda= \pm \int_{B(\R)^{\circ}}  | \omega_1 \wedge \dots \omega_{r}|, \end{equation} 
where $\omega_i$ is an integral basis for $\Omega^1_{\mathcal{B}}$.  

%
%
%
%
%
%
%

     The usual BSD conjecture  \cite{BSD} for $B$ says 
 \begin{equation} \label{UsualBSD} L( \frac{1}{2},  B) =   \pm \frac{\Sha_B \ \cdot \  \prod_{v} c_v(B) }{B(\Q) \cdot  \hat{B}(\Q) } 
\cdot  \Lambda
\end{equation} 
where we {\em allow ourselves to write a finite group in place of its order}, and if $B(\Q)$ is infinite
we understand the right-hand side as $0$. 
Note that every term on the right is a finite $S$-module (e.g. $c_v(B)$ is the local component group of the N{\'e}ron model, 
and $S$ acts on it too.) Also note that (the way we have set things up) the archimedean component
groups $c_{\infty}(B) \simeq B(\R)/B(\R)^{\circ}$ also counts. 

As preparation for the equivariant version, we phrase this a little differently. Suppose that we give ourselves finite index  subgroups $\mathcal{H} \subset H^1_+$
and $\mathcal{W} \subset \Omega^1_{\mathcal{B}}$ of the respective integral structures. We can
form the period determinant $\Lambda'$ with respect to $\mathcal{H}$ and $\mathcal{W}$, 
i.e. $f_*(\det \mathcal{W}) = \Lambda' \cdot \det \mathcal{H}$, where 
e.g. $\det \mathcal{H}$ denotes the element of the top exterior power
of $\mathcal{H} \otimes \R$ determined by the lattice $\mathcal{H}$. 
Since $[H^1_+: \mathcal{H}] \cdot \det(H^1_+) = \det(\mathcal{H})$
and similarly for $\mathcal{W}$, we deduce the following variant form
of BSD: 
 \begin{equation} \label{UsualBSD2} L( \frac{1}{2},  B) =  \pm  \frac{\Sha_B \ \cdot \  \prod_{v} c_v(B) }{B(\Q) \cdot  \hat{B}(\Q) } 
 \cdot \frac{[H^1_+:\mathcal{H}]}{[\Omega^1_{\mathcal{B}}: \mathcal{W}]}
\cdot  \Lambda'
\end{equation} 
%

%

    \subsection{Statement of the conjecture}
In order to make the equivariant conjecture we need to break
up the right hand side of \eqref{UsualBSD}
in a way that corresponds to the factorization $L(\frac{1}{2}, B) = \prod_{\chi} L(\frac{1}{2}, E \times \chi)$,
where the product is taken over all powers $\chi =\psi^i$ with $i \in (\Z/d)^*$. 

First of all choose integral elements $e_1,e_2 \subset H^1(B(\C), \R)_+$ so that the $S_{\R}$-module generated
by $e_1, e_2$ is free, and similarly  choose $\nu_1, \nu_2 \in \Omega^1_{\mathcal{B}}$. Then  \eqref{UsualBSD2}  says
 \begin{equation} \label{UsualBSD3} L( \frac{1}{2},  B) =   \left(  \frac{\Sha_B \ \cdot \  \prod_{v} c_v(B) }{B(\Q) \hat{B}(\Q) }
 \frac{H^1_+/S e_1 + S e_2}{\Omega^1_{\mathcal{B}} / S \nu_1 + S \nu_2  } \right)
\cdot  \Lambda'
\end{equation}  
where $\Lambda'$ is the period  determinant taken relative to the integral lattices $S e_1+Se_2$ and $S \nu_1+ S \nu_2$. 
Note that all the finite groups inside the brackets on the right-hand side are actually $S$-modules. We will next examine how to refine
each term on the right to an element of $S_{\R}^{\times}$, so that we recover \eqref{UsualBSD2} by taking norms.

Firstly let us examine $\Lambda'$. The map  \eqref{Intmap} is an {\em isomorphism} of free $S_{\R} = (S \otimes_{\Q} \R)$ modules of rank $2$. 
We will obtain an element of $S_{\R}^{\times}$
by comparing generators for $\wedge^2_{S_{\R}} LHS $ and $\wedge^2_{S_{\R}} RHS$:
We have 
\begin{equation} \label{lambdadef1} \mbox{ image of $\nu_1 \wedge_{S_{\R}} \nu_2$} =  \lambda (e_{1  } \wedge_{S_{\R}} e_{2 }) \ \ \mbox{ some $\lambda \in S_{\R}$} \end{equation}
 and $\lambda \in S_{\R}$ is the desired element; its norm is equal to $\Lambda'$.  
 A more explicit way to think about this is the following: 
 There are elements $\alpha, \beta, \gamma, \delta \in S_{\R}$ so that 
the period map  \eqref{Intmap} is given by 
 $$ f \left( \begin{array}{c} \nu_1 \\ \nu_2 \end{array} \right) = 
  \left( \begin{array}{cc}  \alpha & \beta \\ \gamma & \delta  . \end{array}\right) \cdot
   \left( \begin{array}{c} e_1 \\e_2 \end{array} \right).$$
 Then simply $\lambda = \alpha \delta-\beta \gamma \in S_{\R}$; also the norm of $\lambda$ is $\Lambda'$ as before.

We can now state the equivariant BSD conjecture.  The $\R$-linear extension of $\psi: S \rightarrow \C$
gives $\psi: S_{\R} \rightarrow \C$. 
We then  allow ourselves to denote $\psi(a)$ also by $a^{\psi}$. Then:
\begin{equation} \label{eqBSD} L( \frac{1}{2}, E \times \psi )   =   \left( \left[ \frac{\Sha_B \ \cdot \  \prod_{v} c_v(B) }{B(\Q) \hat{B}(\Q)} 
\cdot  \frac{(H^1_+/S e_1 + S e_2)}{ ( \Omega^1_{\mathcal{B}} / S \nu_1 + S \nu_2  )}    \right]  . \lambda. \right)^{\psi}  \mbox{ modulo } \psi(S^{\times}).
\end{equation} 
 part of the conjecture is that the square-bracketed term is a virtually principal $S$-module (see \S \ref{Cycback}) so that
 it gives an element of $S_{\Q}^{\times}/S^{\times}$ according to our conventions.  As before, we regard the right-hand side as $0$
 if $B(\Q)$ is infinite. Also, ``modulo $\psi(S^{\times})$'' means that
 the ratio of the two sides belongs to $\psi(S^{\times})$. 
 
As remarked previously, it is likely one can derive this from the equivariant Tamagawa number conjecture \cite{ETNC},  although we did not verify the details of this process. 
For the purpose of this paper, the phrase ``equivariant BSD'' refers to the formulation \eqref{eqBSD} above.

 Taking the corresponding conjecture with $\psi$ replaced by $\psi^i$, and taking product over $i \in (\Z/d)^*$,
 recovers the original BSD conjecture for $B$ -- at least up to algebraic units. 

\subsection{Explication}
Assume equivariant BSD.  Now since we are interested only in proving Proposition \ref{BSD}
we may suppose that $L(\frac{1}{2}, E \times \psi) \neq 0$;
then also (by \eqref{eqBSD}) we have that $B(\Q)$ is finite
and so $L(\frac{1}{2}, B) \neq 0$. We assume these in what follows.

Next let us explicitly choose $\nu_1, \nu_2, e_1,e_2$ as in the discussion
above  \eqref{UsualBSD3}.  The
inclusion $B \hookrightarrow A$ induces $H^1(A(\C)) \rightarrow H^1(B(\C))$. 
It's enough to produce forms $\nu_1, \nu_2, e_1, e_2$ on $A$
so that  the $R$-modules spanned by $\nu_1, \nu_2, e_1, e_2$ are free, 
and then we pull them back to $B$.   Then the $S$-modules spanned by $\nu_1, \nu_2$
and by $e_1, e_2$ are also free.  In order to compute the number $\lambda$
as above, it will be enough to do the corresponding computation on $A$
and then pull back to $B$.

 \begin{itemize}
 \item Choice of $e_i$:

 Recall that $H_1(A(\C), \Z)$ is free as $R$-module on basis $\gamma_1, \gamma_2, \overline{\gamma_1}, \overline{\gamma_2}$.

 We let $x_1, x_2, y_1, y_2  \in H^1(A(\C), \Z)$ be the dual basis (to the basis for $H_1(A(\C), \Z)$
 as $\Z$-module obtained by applying $\mu$ to $\gamma_1, \gamma_2,  \overline{\gamma_1}, \overline{\gamma_2}$).  
 In other words, for any $\alpha \in \mu$, 
 $$ \langle x_1, \alpha(\gamma_i ) \rangle = \begin{cases} 1, i=1, \alpha=\mathrm{id} \\ 0, else \end{cases}; \langle x_1, \overline{\alpha(\gamma_i)} \rangle = 0,$$
and $x_2$ is similarly dual to $\gamma_2$, $y_1$ to $\overline{\gamma_1}$, $y_2$ to $\overline{\gamma_2}$. 
 
 Then $H^1(A(\C), \Z)$
 is a free $R$-module on $x_1, x_2, y_1, y_2$. 
 Also, the images of $x_1, y_1$ in $H^1_+$ coincide, as do $x_2, y_2$; 
 and
 $$H^1(A(\C), \Z)_+ \mbox{ is a free $R$-module on $x_1, x_2$,}$$
 where we abuse notation by writing $x_1$ also for its image in $H^1_+$. 
%
%

 \item  Choice of $\nu_i$: 
 
 
 We have an isomorphism, from \eqref{Formiso}, 
  $$\left(   \Omega^1_{\mathcal{E}} \mathfrak{d}_F^{-1} \otimes_{\OO_F} \mathfrak{d}_{F_{\psi }/F}^{-1}\right) \stackrel{\sim}{\longrightarrow} \Omega^1_{\mathcal{A}/\Z}.$$ 
 
 Now choose a $\Z$-basis $\xi_1, \xi_2$ for $\Omega^1_{\mathcal{E}}\mathfrak{d}_F^{-1}$
 and take $\nu_i $ to be (the image of) $\xi_i \otimes x$, where $x \in \mathfrak{d}_{F_{\psi}/F}^{-1}$
is chosen to have the property that 
 $$ [ \mathfrak{d}_{F_{\psi}/F}^{-1}: \sum_{\alpha \in \mu}  \OO_F x^{\alpha}  ]$$
 is prime-to-$\ell$ (see next paragraph for why this is possible.) In particular,  
\begin{equation} \label{primetol}  \Omega^1_{\mathcal{A}}/\left( R \nu_1 + R \nu_2  \right ) \mbox{ is prime to $\ell$.} \end{equation}  
 As for why we can choose such an $x$:
We want to show that $\mathfrak{d}_{F_{\psi}/F}^{-1} \otimes_{\Z} \Z_{\ell}$
 has a ``normal basis'' over $\OO_F \otimes_{\Z} \Z_{\ell}$, i.e.
 there is $x \in \mathfrak{d}_{F_{\psi}/F}^{-1} \otimes_{\Z} \Z_{\ell}$ so that $\alpha  x \ (\alpha \in \mu)$
 spans as $\OO_F \otimes \Z_{\ell}$-module. This comes down to the fact that
 $F_{\psi}/F$ is tamely ramified at primes above $\ell$, and 
 Galois-stable ideals in tamely ramified extensions have (locally) normal bases: 
 
 In other words, let $\mathfrak{l}_i$ be the primes of $F$ above $\ell$,
 and let $\lambda_{ij}$ be the primes of $F_{\psi}$ above $\mathfrak{l}_i$. 
 Let $m_{ij}$ be the valuation of $\mathfrak{d}_{F_{\psi}/F}^{-1}$ at $\lambda_{ij}$. 
 We are asking that
 $$ \prod_{i,j}  \lambda_{ij}^{m_{ij}}   \OO_{F_{\psi, \lambda_{ij}}} \mbox{ have a normal basis over } \prod \OO_{F, \mathfrak{l}_i}.$$
 It is enough that $\prod_j  \lambda_{ij}^{m_{ij}}   \OO_{F_{\psi, \lambda_{ij}}} $ have a normal basis over $\OO_{F, \mathfrak{l}_i}$
 for each $i$ separately; say $i=1$. Next, because the Galois group permutes the various $\lambda_{1j}$, 
 and $m_{1j}$ doesn't depend on $j$, 
 it is enough to show that $\lambda_{11}^{m_{11}} \OO_{F_{\psi}, \lambda_{11}}$ has a normal basis over 
 over $\OO_{F, \mathfrak{l}_1}$. But that is a theorem of S. Ullom \cite[Theorem 1]{Ullom} because $F_{\psi}/F$ is tamely ramified. 
 
 Finally, we note for later use that in fact 
 \begin{equation} \label{xint} \mathrm{Norm}(\mathfrak{n}_{\psi}) x \in \OO_{F_{\psi}} \otimes \Z_{\ell}\end{equation} i.e.
 it is an algebraic integer above $\ell$.  Here $\Norm(\mathfrak{n}_{\psi}) \in \Z$ is the absolute ideal norm from ideals of $F$.
 
  In fact, it's enough to see that $\mathfrak{n}_{\psi} x \subset \OO_{F_{\psi}} \otimes \Z_{\ell}$, i.e.
 $\mathfrak{n}_{\psi} \mathfrak{d}_{F_{\psi}/F}^{-1} \subset \OO_{F_{\psi}} \otimes \Z_{\ell}$. 
 But, if $L/K$ is a tamely ramified extension of global fields
 then $\prod_{\mathfrak{q}} \mathfrak{q} \cdot \mathfrak{d}_{L/K}^{-1} \subset \OO_L$,
 where the product is over ramified primes $\mathfrak{q}$. (We want just the ``version above $\ell$'' of this.)  One reduces immediately 
 to the case of a tamely ramified extension of local fields, say with ramification index 
 $e$ and residue field degree $f$. In that case, 
 we can check the inclusion by taking norms of both sides;
 the valuation of the norm of $\mathfrak{q}$ is $ef$ and the valuation
 of the norm of $\mathfrak{d}_{L/K}$ is $e-1$; clearly $ef \geq e-1$. 
 
\end{itemize}

Recall that we may choose $\xi_1, \xi_2$
 in such a way that \begin{equation} \label{adef} \mathrm{Im}(\frac{\xi_2}{\xi_1}) =  \frac{ \sqrt{-\Delta_F/4}}{a} \end{equation} 
 where $a=[\Omega^1_{\mathcal{E}} \mathfrak{d}_F^{-1}: \OO_F \xi_1]$ is as in  \eqref{Cperiod}.    
 
%
%

 \subsection{Exterior product computations}

We compute $\lambda \in S_{\R}$ as in \eqref{lambdadef1} -- with respect to 
the images of $e_1, e_2, \nu_, \nu_2$ under the natural maps induced by $B \hookrightarrow A$ -- 
 by computing its analog $\widetilde{\lambda} \in R_{\R}$ computed ``on $A$'':
There exists $\widetilde{\lambda} \in R_{\R}$ such that
\begin{equation} \label{lambdadef2} \mbox{ image of $\nu_1 \wedge_{R_{\R}} \nu_2$} = \widetilde{ \lambda} (e_{1  } \wedge_{R_{\R}} e_{2 }) \ \ \mbox{ some $\widetilde{\lambda} \in R_{\R}$} \end{equation}
(where everything is computed on the abelian variety $A$). Then  the desired $\lambda \in S_{\R}$ is simply the image of $\widetilde{\lambda} \in R_{\R}$ under the natural map. 

In what follows, we write simply $\nu_1 \wedge \nu_2$ instead of $\nu_1 \wedge_{R_{\R}} \nu_2$. 
 
Recall that we take $e_1=x_1, e_2=x_2$ (see above).  

Put $\delta_i = \gamma_i + \overline{\gamma_i}$.
Regard integration on $\delta_i$ as being functionals  $H^1(A(\C), \R) \rightarrow \R$;
they factor through $H^1_+$. 
By  averaging them over $\mu$ we get $R$-linear functionals:
we  define $\Delta_i: H^1(A(\C), \R)_+ \rightarrow R_{\R}$ by the rule
$$\Delta_i (\omega) = \sum \alpha  \int_{\alpha(\delta_i)} \omega,  \ \ (i=1, 2; \omega \in H^1(A(\C),\R).),$$
which is now $R$-linear.
We now pair both sides of \eqref{lambdadef2}  with $\Delta_1 \wedge \Delta_2$ in order to compute $\lambda$. 

 Firstly,
 $$\langle x_1 , \Delta_1 \rangle = \sum_{\alpha} \alpha \int_{\alpha(\delta_1)}  x_1   = 1  \in R,$$
 and similarly
 $$ \langle x_2, \Delta_2 \rangle =1; \  \langle x_1, \Delta_2 \rangle = \langle x_2, \Delta_1 \rangle = 0.$$ 
Therefore,  \begin{equation} \label{x1x2}   \langle x_1 \wedge x_2 ,   \Delta_1 \wedge \Delta_2 \rangle  =  x_1(\Delta_1) x_2(\Delta_2) - x_2(\Delta_1) x_1(\Delta_2) = 1. \end{equation}  
 
 Next, compute
 $\langle \nu_1 \wedge \nu_2, \Delta_1 \wedge \Delta_2 \rangle$; it equals
\begin{eqnarray*} && \nu_1(\Delta_1) \nu_2(\Delta_2) -  \nu_2(\Delta_1) \nu_1(\Delta_2)    
  = \\  &&
\sum_{\alpha, \beta}   \alpha^{-1} \beta^{-1}  \left(  \langle \nu_1, \alpha^{-1}(\delta_1) \rangle \langle \nu_2, \beta^{-1}(\delta_2) \rangle
  -  \langle \nu_2,  \alpha^{-1}(\delta_1)  \rangle \langle \nu_1, \beta^{-1}(\delta_2)  \rangle \right), \end{eqnarray*} 
 wher $\int_{ \alpha^{-1}(\delta_1)} \nu_1$ has been abbreviated $  \langle \nu_1, \alpha_1^{-1}(\delta_1)  \rangle$ and so on. 
In turn
 \footnote{Write $\xi_2/\xi_1$ for the element $t \in F$ with $t \xi_1=\xi_2$. We used the following simple fact at \eqref{footnotestep}, with $q=
 \frac{\sigma_{\beta}(x) \xi_2 }{ \sigma_{\alpha}(x) \xi_1}$,   $$\Re(z_1) \Re(q z_2) - \Re(z_2) \Re(q z_1) = 
 \mathrm{Im}(q) \cdot \Re(z_1 \overline{z_2}).$$ } \begin{eqnarray} \label{Urkl}    \frac{1}{4} \left(  \langle \nu_1, {\alpha^{-1}(\delta_1)} \rangle \langle \nu_2, \beta^{-1}(\delta_2)  \rangle
 - \langle \nu_2,  \alpha_1^{-1} (\delta_1)  \rangle \langle \nu_1 , \beta^{-1}(\delta_2)  \rangle \right)  \\ \nonumber
 = \left( \mathrm{Re}(\int_{\gamma_1}  \sigma_{\alpha}(x) \xi_1) \mathrm{Re}(\int_{\gamma_2}  \sigma_{\beta}(x) \xi_2)
 -\mathrm{Re}(\int_{\gamma_2}  \sigma_{\alpha}(x) \xi_1) \mathrm{Re}(\int_{\gamma_1} \sigma_{\beta}(x) \xi_2) \right) 
 \\ = \label{footnotestep}  \mathrm{Im} \left( \frac{\sigma_{\beta}(x)}{ \sigma_{\alpha}(x)} \frac{\xi_2}{\xi_1} \right) \cdot \left( \mbox{ area of lattice of $E$ with respect to $1$-form $\sigma_\alpha(x) \xi_1$} \right)
\\ =   \frac{i}{2}  \left(  \int_{E(\C)}  \xi_1 \wedge   \overline{\xi_1}  \right) \cdot \mathrm{Im}   \left( \sigma_{\beta}(x) \overline{\sigma_{\alpha}(x)} \frac{\xi_2}{\xi_1} \right) 
    \end{eqnarray} 
and $\langle \nu_1 \wedge \nu_2, \Delta_1 \wedge \Delta_2 \rangle$ is obtained by summing this expression multiplied by $4 \alpha^{-1} \beta^{-1}$, over $\alpha$ and $\beta$.

%
 
  Note now that if we modify $\xi_2$ by a real multiple of $\xi_1$ the answer is unchanged (the contribution of $\alpha, \beta$
  and of $\beta, \alpha$ cancel in the summation).  
   Thus we may take $\xi_2 = \frac{1}{a} \sqrt{\Delta_F/4} \cdot \xi_1$,  where $a$ as in  \eqref{adef},  and then  from the definition of $\Omega_E$
   we see that:
\begin{eqnarray*} \langle  \nu_1 \wedge \nu_2 , \Delta_1 \wedge \Delta_2 \rangle & =&   \sum_{\alpha, \beta} \sqrt{-\Delta_F} \Omega_E \cdot \mathrm{Re} \left( \sigma_{\beta}(x) \overline{\sigma_{\alpha}(x)} \right)  \alpha^{-1} \beta^{-1}  \\ &  =&   \sqrt{-\Delta_F} [x][\overline{x}] \Omega_E \in R_{\R}. \end{eqnarray*}

(Recall that $[x] = \sum_{\alpha} \sigma_{\alpha}(x) \alpha^{-1}$ and $\overline{[x]} = \sum_{\alpha} \overline{\sigma_{\alpha}(x)} \cdot \alpha^{-1}$;
these belong to $R_{\C}$ but their product $[x] \overline{[x]}$ belongs to $R_{\R}$.) 
Comparing with \eqref{x1x2} and \eqref{lambdadef2} we see that
$$\widetilde{ \lambda} =  \sqrt{-\Delta_F} [x][\overline{x}] \Omega_E \in R_{\R}.$$
    We can now rewrite equivariant BSD from the form \eqref{eqBSD}. We have seen that $H^1_+$ is free on $e_1, e_2$, so
 \begin{equation} \label{eqBSD21}  \frac{ L( \frac{1}{2}, E \times \psi )  }{\Omega_E \sqrt{-\Delta_F}} =  \left(      [x]  \overline{[x]} \cdot \left[ \frac{\Sha_B\ \cdot \  \prod_{v} c_v(B) }{B(\Q) \hat{B}(\Q)}  
\cdot   \frac{1}  {\left(   \Omega^1_{\mathcal{B}} / S \nu_1+S\nu_2 \right) }\right] \right)^{\psi} \mbox{ mod $\psi(S^{\times})$.} \end{equation} 
   

   \subsection{Conclusion}
   We are almost finished.  First  note that for $M$ a finite $S$-module, we always have $[M] \leq [S/\# M]$;
   this is simply the fact that an ideal of $S$ divides its norm.  As we have mentioned, we can suppose that $B(\Q)$ and $\hat{B}(\Q)$ are finite. 
 So, examining the denominator of \eqref{eqBSD21},
\begin{eqnarray*}
[\hat{B}(\Q)_{\tors}] + [B(\Q)_{\tors}] +  [\Omega^1_{\mathcal{B}}/S\nu_1 + S \nu_2] &\leq&  2 [E(F_{\psi})_{\tors}]   + 
 \\  & & + [\Omega^1_{\mathcal{B}}/\mbox{image of }\Omega^1_{\mathcal{A}}]   + [\mbox{image of $\Omega^1_{\mathcal{A}}$}/R\nu_1 + R\nu_2] 
   \\ 
  & \leq_{\ell} & [S/ K] 
       \end{eqnarray*} 
     Here $K =  \# E(F_{\psi})_{\tors}^2 \cdot d^{2} \cdot  \left(   \# \Omega^1_{\mathcal{A}}/R\nu_1+R\nu_2 \right)$ and 
       $\leq_{\ell}$ means the equality holds ``at $\ell$,'' as mentioned before. 
This last inequality follows from \eqref{OmSr}. 
Since by \eqref{xint} we have that $ \Norm(\mathfrak{n}_{\psi})^2 \cdot \psi([x]\overline{[x]})$ is an algebraic integer,  we have proved 
   $$  \Norm(\mathfrak{n}_{\psi})^2 K\cdot \frac{L( \frac{1}{2}, E \times \psi ) }{  \Omega_E \sqrt{-\Delta_F}} \mbox{ has valuation $\geq 0$ at $\ell$,}$$
which finishes the proof,  because   $ \# \Omega^1_{\mathcal{A}}/R\nu_1+R\nu_2$ is prime-to-$\ell$ by \eqref{primetol}.

\section{Numerical computations} \label{Haluk}

\subsection{How much of the cohomology is base-change at higher levels ? }  \label{numerics}

Let $F$ be an imaginary quadratic field with its ring of integers $\mathcal{O}_F$. Given a positive integer $N$, 
consider the congruence subgroup $\Gamma_0((N))$ of level $(N)$ inside the associated Bianchi group $\SL_2(\mathcal{O}_F)$. 
Let $H^2_{bc}(\Gamma_0((N)), \C)^{new}$ denote the subspace of 
$H^2(\Gamma_0((N)), \C)^{new}$ which corresponds to Bianchi modular forms that are base-change of classical elliptic newforms and their twists 
(see Section \ref{BC-definition}). We are interested in the following question: \\
\begin{center}
{\em How much of $H^2(\Gamma_0((N)), \C)^{new}$ is exhausted by $H^2_{bc}(\Gamma_0((N)), \C)^{new}$ ?}  
\end{center}

Note that this question was investigated in \cite{rahm-sengun} for $N=1$ and more general coefficient modules. 
As part of an ongoing project \cite{sengun-tsaknias}, Panagiotis Tsaknias and the second author (M.H.\c{S}.) computed the dimension of 
$H^2_{bc}(\Gamma_0((N)),\C)^{new}$
for the following special case: \\

\begin{center}
{\em $F$ is ramified at a unique prime $p>2$, $N$ is square-free and prime to $p$.}
\end{center}

Over the fields $F=\Q(\sqrt{-d})$ with $d=3,7,11$, we have collected data to investigate the above question. 
For efficiency reasons, we computed $H_1(\Gamma_0((N)), \mathbb{F}_\ell)$ for six primes $\ell$ lying between $50$ and $100$ 
and took the minimum of the dimensions we got from these six mod $\ell$ computations. By the Universal Coefficients Theorem,  
this minimum is an upper-bound on the dimension of $H_1(\Gamma_0((N)), \C)$. However in practice, this upper-bound is very likely to give the actual dimension. 

We focused on three classes of ideals $(N) \triangleleft \mathcal{O}_F$: \\
\begin{itemize}
\item $N=p$ with $p$ rational prime that is {\em inert} in $F$ (Table 1). \\
\indent Here there are no oldforms.  Thus the base-change dimension formula, together with the number cusps, provides a 
lower bound for the dimension of $H_1(\Gamma_0((N)), \C)$. If this lower-bound agrees with our upper-bound 
coming from the mod $\ell$ computations, then we know for sure that the whole (co)homology is exhausted by 
base-change. As a result, the zero entries in ``non-BC" column of Table 1 are provenly correct. 

We also directly computed the char. 0 dimensions (the scope was smaller of course). The nonzero 
entries in the ``non-BC" columns of Table 1 which are in bold are proven to be correct as a result of these char. 0 computations. \\

\item $N=p$ with $p$ rational prime that is {\em split} in $F$  (Table 2). \\
\indent Here they may be oldforms and we can compute size of the oldforms using the data computed \cite{SengunExpMath}. 
Now the base-change dimension formula, the dimension of the old part, together with the number cusps, provide a 
lower bound for the dimension of $H_1(\Gamma_0((N)), \C)$. As above, if this lower-bound agrees with our upper-bound 
coming from the mod $\ell$ computations, then we know for sure that the whole (co)homology is exhausted by 
base-change. As a result, the zero entries in the ``non-BC" columns of Table 2 are provenly correct. 
 
We also directly computed the char. 0 dimensions for Table 2. The nonzero 
entries in the ``non-BC" columns of Table 2 which are in bold are proven to be correct as a result of these char. 0 computations. \\

\item $N=pq$ with $p,q$ rational primes that are {\em inert} in $F$ (Table 3). \\
\indent To compute the size of the oldforms, one can use the data computed for Table 1 (note that 
we only have to refer to entries of Table 1 which are provenly correct). As before, 
the zero entries in the ``non-BC" columns of Table 3 are provenly correct. 

We also directly computed the char. 0 dimensions for Table 3. The nonzero 
entries in the ``non-BC" columns of Table 3 which are in bold are proven to be correct as a result of these char. 0 computations. \\

\end{itemize}

Of course, there can be non-base change classes in the oldforms part, but this is not common. 
As we mentioned before, in Case (1), there are no oldforms. 
In Case (2),  extensive computations in \cite{SengunExpMath} show that $\%90$ 
of the time the cuspidal cohomology of $\Gamma_0(\p)$ (with trivial coefficients) vanishes, 
where $(p)=\p \bar{\p}$. So usually, we do not have oldforms in Case (2). But when we do, 
they are completely non-base change. For Case (3), there will be lots of old forms, however 
with little non-base change classes amongst them (which can detected via Table 1).  \\

In the Tables 1,2,3, the columns labeled ``new" denote the dimension of the new subspace and the columns labeled ``non-BC" denote the 
dimension of the dimension of the complement of the base-change subspace inside the new subspace.

\begin{longtable}{ccc|ccc|ccc}

\multicolumn{3}{c}{$d=3$}&  \multicolumn{3}{c}{$d=7$} & \multicolumn{3}{c}{$d=11$}\\ \hline
$p$ & {\text new} & {\text non-BC} &  $p$ & {\text new} & {\text non-BC} & $p$ & {\text new} & {\text non-BC} \\ \hline 
5	&	0	&	0	&	3	&	0	&	0	&	7	&	3	&	0	\\  
11	&	2	&	0	&	5	&	1	&	0	&	13	&	5	&	0	\\  
17	&	2	&	0	&	13	&	5	&	{\bf 2}&	17	&	9	&{\bf 2}	\\  
23	&	4	&	0	&	17	&	5	&	0	&	19	&	9	&	0	\\  
29	&	4	&	0	&	19	&	5	&	0	&	29	&	13	&	0	\\  
41	&	6	&	0	&	31	&	9	&	0	&	41	&	21	&{\bf 2}	\\  
47	&	10	&	{\bf 2}&	41	&	13	&	0	&	43	&	23	&{\bf 2}	\\  
53	&	8	&	0	&	47	&	15	&	0	&	61	&	29	&	0	\\  
59	&	12	&	{\bf 2}&	59	&	19	&	0	&	73	&	35	&	0	\\  
71	&	12	&	0	&	61	&	19	&	0	&	79	&	39	&	0	\\  
83	&	14	&	0	&	73	&	23	&	0	&	83	&	41	&	0	\\  
89	&	16	&	{\bf 2}&	83	&	29	&	{\bf 2}&	101	&	49	&	0	\\  
101	&	16	&	0	&	89	&	31	&	{\bf 2}&	107	&	55	&{\bf 2}	\\  
107	&	18	&	0	&	97	&	33	&	{\bf 2}&	109	&	53	&	0	\\  
113	&	22	&	{\bf 4}&	101	&	35	&	{\bf 2}&	127	&	63	&	0	\\  
131	&	22	&	0	&	103	&	33	&	0	&	131	&	65	&	0	\\  
137	&	22	&	0	&	131	&	43	&	0	&	139	&	69	&	0	\\  
149	&	26	&	{\bf 2}&	139	&	45	&	0	&	149	&	73	&	0	\\  
167	&	28	&	0	&	157	&	51	&	0	&	151	&	75	&	0	\\  
173	&	28	&	0	&	167	&	55	&	0	&	167	&	83	&	0	\\  
179	&	34	&	{\bf 4}&	173	&	57	&	0	&	173	&	85	&	0	\\  
191	&	34	&	{\bf 2}&	181	&	59	&	0	&	193	&	99	&{\bf 4} \\  
197	&	32	&	0	&	199	&	65	&	0	&	197	&	97	&	0	\\  
227	&	46	&	{\bf 8}&	223	&	73	&	0	&	211	&	105	&	0	\\  
233	&	38	&	0	&	227	&	75	&	0	&	227	&	113	&	0	\\  
239	&	40	&	0	&	229	&	75	&	0	&	233	&	121	&	6	\\  
251	&	42	&	0	&	241	&	79	&	0	&	239	&	119	&	0	\\  
257	&	42	&	0	&	251	&	87	&	{\bf 4}&	241	&	119	&	0	\\  
263	&	44	&	0	&	257	&	85	&	0	&	263	&	131	&	0	\\  
269	&	44	&	0	&	269	&	89	&	0	&	271	&	135	&	0	\\  
281	&	48	&	{\bf 2}&	271	&	89	&	0	&	277	&	137	&	0	\\  
293	&	48	&	0	&	283	&	93	&	0	&	281	&	139	&	0	\\  
311	&	54	&	{\bf 2}&	293	&	97	&	0	&	283	&	141	&	0	\\  
317	&	52	&	0	&	307	&	101	&	0	&	293	&	145	&	0	\\  
347	&	58	&	0	&	311	&	103	&	0	&	307	&	153	&	0	\\  
353	&	60	&	{\bf 2}&	313	&	103	&	0	&	337	&	167	&	0	\\  
359	&	62	&	{\bf 2}&	349	&	115	&	0	&	347	&	173	&	0	\\  
383	&	64	&	0	&	353	&	117	&	0	&	349	&	173	&	0	\\  
389	&	64	&	0	&	367	&	121	&	0	&	359	&	181	&	2	\\  
401	&	66	&	0	&	383	&	127	&	0	&	373	&	185	&	0	\\  
419	&	70	&	0	&	397	&	131	&	0	&	409	&	203	&	0	\\  
431	&	72	&	0	&	409	&	135	&	0	&	431	&	215	&	0	\\  
443	&	80	&	6	&	419	&	139	&	0	&	439	&	219	&	0	\\  
449	&	74	&	0	&	433	&	147	&	4	&	457	&	227	&	0	\\  
461	&	76	&	0	&	439	&	145	&	0	&	461	&	229	&	0	\\  
	&		&		&	461	&	153	&	0	&	479	&	239	&	0	\\  
	&		&		&	467	&	155	&	0	&	491	&	245	&	0	\\  
	&		&		&	479	&	159	&	0	&	503	&	251	&	0	\\  
	&		&		&	503	&	167	&	0	&	523	&	261	&	0	\\  
	&		&		&	509	&	169	&	0	&	541	&	269	&	0	\\  
	&		&		&	521	&	173	&	0	&	547	&	273	&	0	\\  
	&		&		&	523	&	173	&	0	&	557	&	277	&	0	\\  
	&		&		&	563	&	187	&	0	&	563	&	281	&	0	\\  
	&		&		&	577	&	191	&	0	&	569	&	283	&	0	\\  
	&		&		&	587	&	195	&	0	&	571	&	285	&	0	\\  
	&		&		&	593	&	197	&	0	&	593	&	295	&	0	\\  
	&		&		&	601	&	199	&	0	&	601	&	301	&	2	\\  
	&		&		&	607	&	201	&	0	&	607	&	303	&	0	\\  
	&		&		&	619	&	205	&	0	&	613	&	305	&	0	\\ 
\multicolumn{9}{c}{} \\
\caption{Level is $(p)$ with $p$ rational prime, {\em inert} in $F$}
\end{longtable}

\begin{longtable}{ccc|ccc|ccc}

\multicolumn{3}{c}{$d=3$}&  \multicolumn{3}{c}{$d=7$} & \multicolumn{3}{c}{$d=11$}\\ \hline
$p$ & {\text new} & {\text non-BC} &  $p$ & {\text new} & {\text non-BC} & $p$ & {\text new} & {\text non-BC} \\ \hline 
7	&	1	&	0	&	11	&	3	&	0	&	3	&	1	&	0	\\ 
13	&	1	&	0	&	23	&	7	&	0	&	5	&	1	&	0	\\ 
19	&	3	&	0	&	29	&	9	&	0	&	23	&	13	&{\bf 2}	\\ 
31	&	5	&	0	&	37	&	15	&	{\bf 4}&	31	&	17	&{\bf 2}	\\ 
37	&	5	&	0	&	43	&	13	&	0	&	37	&	17	&	0	\\ 
43	&	7	&	0	&	53	&	17	&	0	&	47	&	35	&{\bf 12}	\\ 
61	&	11	&	{\bf 2}&	67	&	21	&	0	&	53	&	25	&	0	\\ 
67	&	11	&	0	&	71	&	23	&	0	&	59	&	29	&	0	\\ 
73	&	11	&	0	&	79	&	25	&	0	&	67	&	33	&	0	\\ 
79	&	15	&	{\bf 2}&	107	&	37	&	{\bf 2}&	71	&	35	&	0	\\ 
97	&	15	&	0	&	109	&	35	&	0	&	89	&	43	&	0	\\ 
103	&	19	&	{\bf 2}&	113	&	37	&	0	&	97	&	47	&	0	\\ 
109	&	17	&	0	&	127	&	41	&	0	&	103	&	51	&	0	\\ 
127	&	29	&	{\bf 8}&	137	&	45	&	0	&	113	&	55	&	0	\\ 
139	&	25	&	{\bf 2}&	149	&	49	&	0	&	137	&	67	&	0	\\ 
151	&	27	&	{\bf 2}&	151	&	49	&	0	&	157	&	77	&	0	\\ 
157	&	27	&	{\bf 2}&	163	&	53	&	0	&	163	&	81	&	0	\\ 
163	&	29	&	{\bf 2}&	179	&	59	&	0	&	179	&	89	&	0	\\ 
181	&	31	&	{\bf 2}&	191	&	65	&	{\bf 2}&	181	&	89	&	0	\\ 
193	&	31	&	0	&	193	&	65	&	{\bf 2}&	191	&	97	&	2	\\ 
199	&	35	&	{\bf 2}&	197	&	67	&	{\bf 2}&	199	&	99	&	0	\\ 
211	&	37	&	{\bf 2}&	211	&	69	&	0	&	223	&	111	&	0	\\ 
223	&	37	&	0	&	233	&	77	&	0	&	229	&	115	&	2	\\ 
229	&	37	&	0	&	239	&	79	&	0	&	251	&	125	&	0	\\ 
241	&	47	&	{\bf 8}&	263	&	87	&	0	&	257	&	129	&	2	\\ 
271	&	47	&	{\bf 2}&	277	&	91	&	0	&	269	&	133	&	0	\\ 
277	&	45	&	0	&	281	&	97	&	4	&	311	&	157	&	2	\\ 
283	&	47	&	0	&	317	&	105	&	0	&	313	&	155	&	0	\\ 
307	&	53	&	{\bf 2}&	331	&	111	&	2	&	317	&	157	&	0	\\ 
313	&	51	&	0	&	337	&	111	&	0	&	331	&	165	&	0	\\ 
331	&	57	&	{\bf 2}&	347	&	115	&	0	&	353	&	175	&	0	\\ 
337	&	57	&	{\bf 2}&	359	&	119	&	0	&	367	&	183	&	0	\\ 
349	&	59	&	{\bf 2}&	373	&	123	&	0	&	379	&	189	&	0	\\ 
367	&	61	&	0	&	379	&	125	&	0	&	383	&	191	&	0	\\ 
373	&	61	&	0	&	389	&	129	&	0	&	389	&	193	&	0	\\ 
379	&	63	&	0	&	401	&	133	&	0	&	397	&	201	&	4	\\ 
397	&	67	&	2	&	421	&	139	&	0	&	401	&	199	&	0	\\ 
409	&	69	&	2	&	431	&	145	&	2	&	419	&	209	&	0	\\ 
421	&	69	&	0	&	443	&	147	&	0	&	421	&	209	&	0	\\ 
433	&	71	&	0	&	449	&	149	&	0	&	433	&	215	&	0	\\ 
	&		&		&	457	&	151	&	0	&	443	&	221	&	0	\\ 
	&		&		&	463	&	153	&	0	&	449	&	223	&	0	\\ 
	&		&		&	487	&	161	&	0	&	463	&	231	&	0	\\ 
	&		&		&	491	&	163	&	0	&	467	&	233	&	0	\\ 
	&		&		&	499	&	167	&	2	&	487	&	243	&	0	\\ 
	&		&		&		&		&		&	499	&	251	&	2	\\ 
	&		&		&		&		&		&	509	&	255	&	2	\\ 
	&		&		&		&		&		&	521	&	259	&	0	\\ 
	&		&		&		&		&		&	577	&	289	&	2	\\ 
	&		&		&		&		&		&	587	&	293	&	0	\\ 
\multicolumn{9}{c}{} \\
\caption{Here the level is $(p)$ with $p$ rational prime, {\em split} in $F$}
\end{longtable}

\begin{longtable}{ccc|ccc|ccc}

\multicolumn{3}{c}{$d=3$}&  \multicolumn{3}{c}{$d=7$} & \multicolumn{3}{c}{$d=11$}\\ \hline
$pq$ & {\text new} & {\text non-BC} &  $pq$ & {\text new} & {\text non-BC} & $pq$ & {\text new} & {\text non-BC} \\ \hline 
55	&	7	&	0	&	15	&	3	&	0	&	91	&	41	&	{\bf 4}   \\ 
85	&	11	&	0	&	39	&	11	&	{\bf 2}&	119	&	59	&	{\bf 10} \\ 
115	&	15	&	0	&	51	&	11	&	0	&	133	&	53	&	0	\\ 
145	&	19	&	0	&	57	&	13	&	0	&	203	&	85	&	0	\\ 
187	&	27	&	0	&	65	&	17	&	0	&	221	&	97	&	0	\\ 
205	&	31	&	{\bf 4}&	85	&	21	&	0	&	247	&	109	&	0	\\ 
235	&	33	&	{\bf 2}&	93	&	25	&	{\bf 4}&	287	&	121	&	0	\\ 
253	&	35	&	0	&	95	&	25	&	0	&	301	&	127	&	2	\\ 
265	&	35	&	0	&	123	&	27	&	0	&	323	&	145	&	0	\\ 
295	&	39	&	0	&	141	&	31	&	0	&	377	&	171	&	2	\\ 
319	&	47	&	0	&	155	&	43	&	{\bf 2}&	427	&	181	&	0	\\ 
355	&	47	&	0	&	177	&	39	&	0	&	493	&	225	&	0	\\ 
391	&	59	&	0	&	183	&	43	&	{\bf 2}&	511	&	219	&	2	\\ 
415	&	55	&	0	&	205	&	55	&	{\bf 2}&	533	&	243	&	2	\\ 
445	&	61	&	{\bf  2}&	219	&	49	&	0	&	551	&	253	&	0	\\ 
451	&	67	&	0	&	221	&	65	&	0	&	553	&	233	&	0	\\ 
493	&	75	&	0	&	235	&	61	&	0	&	559	&	253	&	0	\\ 
505	&	67	&	0	&	247	&	73	&	0	&	581	&	245	&	0	\\ 
517	&	75	&	0	&	249	&	55	&	0	&		&		&		\\ 
	&		&		&	267	&	59	&	0	&		&		&		\\ 
	&		&		&	291	&	65	&	0	&		&		&		\\ 
	&		&		&	295	&	81	&	{\bf 4}&		&		&		\\ 
	&		&		&	303	&	67	&	0	&		&		&		\\ 
	&		&		&	305	&	83	&	{\bf 2}&		&		&		\\ 
	&		&		&	309	&	69	&	0	&		&		&		\\ 
	&		&		&	323	&	97	&	0	&		&		&		\\ 
	&		&		&	365	&	97	&	0	&		&		&		\\ 
	&		&		&	393	&	87	&	0	&		&		&		\\ 
	&		&		&	403	&	121	&	0	&		&		&		\\ 
	&		&		&	415	&	111	&	2	&		&		&		\\ 
	&		&		&	417	&	93	&	0	&		&		&		\\ 
	&		&		&	445	&	117	&	0	&		&		&		\\ 
	&		&		&	471	&	105	&	0	&		&		&		\\ 
	&		&		&	485	&	131	&	2	&		&		&		\\ 
	&		&		&	501	&	113	&	2	&		&		&		\\ 
	&		&		&	505	&	133	&	0	&		&		&		\\ 
	&		&		&	515	&	139	&	2	&		&		&		\\ 
	&		&		&	519	&	117	&	2	&		&		&		\\ 
	&		&		&	527	&	161	&	0	&		&		&		\\ 
	&		&		&	533	&	161	&	0	&		&		&		\\ 
	&		&		&	543	&	121	&	0	&		&		&		\\ 
	&		&		&	589	&	181	&	0	&		&		&		\\ 
	&		&		&	597	&	133	&	0	&		&		&		\\ 
	&		&		&	611	&	185	&	0	&		&		&		\\ 
	&		&		&	655	&	173	&	0	&		&		&		\\ 
\multicolumn{9}{c}{} \\
\caption{Here the level is $(pq)$ with $p,q$ rational primes both {\em inert} in $F$}
\end{longtable}

\subsection{Cases with one-dimensional cuspidal cohomology}  \label{numerics2}
As mentioned in the Introduction, experiments in \cite{SengunExpMath} show that for the five Euclidean imaginary quadratic fields $F$, 
the cuspidal part of $H_1(\Gamma_0(\mathfrak{p}),\C)$, for $\Gamma_0(\mathfrak{p}) \leq \PSL_2(\mathcal{O}_F)$ with residue degree one 
prime level $\mathfrak{p}$ of norm $\leq 45000$, vanishes roughly $\%90$ of the time. In the remaining non-vanishing cases, 
the dimension is observed to be one in the majority of cases (see Table 16 of \cite{SengunExpMath} for details). \\

In a new experiment, we computed the dimension of the cuspidal part of  $H_1(\Gamma_0(\mathfrak{n}),\C)$, for $\Gamma_0(\mathfrak{n}) \leq 
\PGL_2(\mathcal{O}_F)$ with {\em all} \ levels  $\mathfrak{n}$ of norm $\leq 10000$ for the fields $F=\Q(\sqrt{-1}), \Q(\sqrt{-3})$. 
Again we see that a significant proportion of the non-vanishing cases have dimension exactly one. The distribution of the levels according to the dimension 
is given in Table 4. 

\begin{longtable}{ccc} 
\textrm{dim} & $\Q(\sqrt{-1})$ & $\Q(\sqrt{-3})$ \\ \hline
0 & 4170 & 3516 \\ 
1 & 614 & 526 \\ 
2 & 734 & 642 \\ 
3 & 341 & 266 \\ 
4 & 402 & 327  \\ 
5 & 183 & 168 \\ 
6 $\leq$ & 1409  & 1104 \\ 
total: & 7853 & 6048 \\ 
\multicolumn{3}{c}{} \\
\caption{distribution of subgroups $\Gamma_0(\mathfrak{n})$ with $\textrm{Norm}(\mathfrak{n}) \leq 10000$.}
\end{longtable}

\subsection{Growth of Regulators of Hyperbolic Tetrahedral Groups}  \label{numerics3}
In this section, we report on our numerical experiments related to the growth of regulators in the case of hyperbolic tetrahedral groups. Here we deal with combinatorial regulators rather than analytic ones. One may however prove that they both have either subexponential or exponential growth with respect to the index \cite{Luck}.

\subsubsection{Tetrahedral Groups}

A hyperbolic tetrahedral group is the index two subgroup consisting of orientation-preserving isometries in the discrete group generated 
by reflections in the faces of a hyperbolic tetrahedron whose dihedral angles are submultiples of $\pi$. It is well-known that 
there are 32 hyperbolic tetrahedral groups; 9 of them are cocompact. Among the 9 cocompact ones, only one is non-arithmetic (see \cite{sengun-tetrahedral}). \\

Let $\Delta$ be one of the 9 compact tetrahedra mentioned above, sitting in hyperbolic 3-space $\H^3$ and $\Gamma$ be the associated 
hyperbolic tetrahedral group. Let $\Sigma$ be a fundamental domain for $\Gamma$, viewed as a 3-dimensional simplicial complex. We can take $\Sigma$ to be the union $\Delta \cup \Delta^*$ where $\Delta^*$ is the copy of $\Delta$ obtained by reflecting $\Delta$ 
along one of its faces. Let $\T$ denote the triangulation of $\H^3$ obtained from $\Delta$, viewed as an infinite, locally finite simplicial complex with a cocompact cellular action of $\Gamma$ so that $\Gamma \backslash \T = \Sigma$. \\

Let $H$ be a finite index subgroup of $\Gamma$ and consider the vector space $M=\R[H \backslash \Gamma]$ 
with the natural $\Gamma$-action. It is well-known that the $\Gamma$-equivarient cohomology of $\T$ equals to the usual cohomology of $\Gamma$:
$$H^*(\Gamma,M) \simeq H^*_\Gamma(\T,M).$$
Put an inner product on $M$ by declaring the basis $H\backslash \Gamma$ to be 
orthonormal. We define the combinatorial Laplacians $\{ \Delta_i \}_i$ on the  $\Gamma$-equivarient cochain complex 
$\{ C^i(\T,M)^\Gamma \}_i$ (which computes the RHS) using the inner product 
$$\langle f,g \rangle_i^\Gamma := \sum_{\sigma \in \Sigma_i} \dfrac{1}{|\Gamma(\sigma)|} \langle f(\tilde{\sigma}),g(\tilde{\sigma})\rangle$$ 
where $\tilde{\sigma}$ is a lift of $\sigma$ in $\T_i$. See, for example,  \cite[Section 2.]{jordan-livne} for details. \\
	
For $i=1,2$, let $r_i$ denote volume of $H^i(\Gamma, \Z[H \backslash \Gamma])$ inside 
$H^i(\Gamma, \R[H \backslash \Gamma])$ with respect to the above inner product. 
According to our Proposition \ref{P1}, asymptotically $r_i$ behaves like the regulator $R_i$. 
For computational efficiency, we will compute another quantity, which, asymptotically speaking, gives us the desired information.  
Let $\tilde{r}_i$ denote the volume (w.r.t. to the same inner product) of the subspace of harmonic i-cochains, that is, 
the kernel of $\Delta_i$. Then it is not hard to see that
$$ \tilde{r}_i \geq r_i  \geq \dfrac{1}{\tilde{r}_i}.$$

We computed $\tilde{r}_i$ for prime level $\Gamma_0$-type subgroups $H$ (see \cite{sengun-tetrahedral}) of two cocompact hyperbolic tetrahedral groups 
$T6$ and $T8$, which, in the notation of \cite{sengun-tetrahedral}, can be identified as $T(4,3,2; 4,3,2)$ and $T(5,3,2; 4,3,2)$. 
While $T6$ is arithmetic, $T8$ is non-arithmetic. The data we collected are depicted in Tables 5 and 6 respectively. In the tables, 
the first column shows the index of the subgroup $H$ inside the tetrahedral group, the second column shows the dimension of the cohomology 
$H_1(H,\R)$. For the cases where this dimension is zero, the space of harmonic cochains is trivial and thus these cases are not included in the tables.

\begin{table}
\begin{tabular}{rr| rl | rl}

$[ T6 : H]$ &  rank &   \multicolumn{1}{c}{Log($\tilde{r}_1$)}  &  Log($\tilde{r}_1) \ / \ [ T6 : H]$ &    \multicolumn{1}{c}{Log($\tilde{r}_2$)}  &  Log($\tilde{r}_2) \ / \ [ T6 : H]$ \\ \hline
122    &  1      &  5.4161004    & 0.04439426559   &    4.4755991     & 0.03668523930 \\ 
170    &  5      & 28.1536040   & 0.1656094354   &    44.7684568     & 0.2633438638  \\ 
290    &  5      & 36.2058878   & 0.1248478891   &    30.5155222     &  0.1052259389 \\ 
362    &  7      & 45.4762539   & 0.1256250109   &    44.4415985     &  0.1227668467 \\ 
458    &  1      & 9.26712597   & 0.02023389951 &      6.4856782     &  0.01416086959\\ 
674    &  1      & 7.6487642     & 0.01134831485  &     8.3297444     &  0.01235867125 \\ 
962    & 11     & 78.0538394   & 0.08113704729  &   79.9289185     &  0.08308619394 \\ 
1034   &  2     & 17.8191345   & 0.01723320555  &   21.3528238     & 0.02065070001 \\ 
1370   &  1     & 7.0476963     & 0.00514430392 &   7.52250931      &  0.005490882711 \\  
1682   & 15    & 105.2828487 & 0.06259384583   & 116.4427193    & 0.06922872726 \\ 
1850   &  2     & 20.1698091   & 0.01090259953  &   20.7570214     &  0.01122001160 \\ 
2210   & 15    & 109.5835840 & 0.04958533211  &    &\\ 
2522   &  2     & 19.5918702   & 0.00776838630  &  &\\  
\multicolumn{6}{c}{} \\
\end{tabular}
\caption{Data for projective subgroups $H$ of $T6$}
\end{table}

\begin{table}
\begin{tabular}{rr | rl | rl}

$[ T8 : H]$ &  rank &  \multicolumn{1}{c}{ Log($\tilde{r}_1$)}  &  Log($\tilde{r}_1) \ / \ [ T8 : H]$ & \multicolumn{1}{c}{ Log($\tilde{r}_2$)}  &  Log($\tilde{r}_2) \ / \ [ T8 : H]$ \\ \hline
42   &      1  &   5.67652517    & 0.1351553613   &       8.25960528   & 0.1966572686  \\ 
82   &      4  &   15.39756182  & 0.1877751441   &     23.67090665     & 0.2886695933  \\ 
962  &     9   &  60.87067153  & 0.06327512633  &    79.69735698     & 0.08284548543  \\ 
1682 &   13  &  90.93035031   & 0.05406085036   &  &  \\ 
2402  &   4   &  26.74244163   & 0.01113340617   &   & \\ 
\multicolumn{6}{c}{} \\
\end{tabular}
\caption{Data for projective subgroups $H$ of $T8$}
\end{table}

\subsubsection{Arithmetic versus non-arithmetic}
As discussed at the end of the introduction (also see \cite[Chapter 9]{BV}), subexponential growth of the regulator with respect to the volume 
might be related to {\em arithmeticity}. Unfortunately, the scope of the data we collected here on the growth of the regulator is too limited to infer anything 
on this speculation.  However, the experiments in \cite{Dunfield, sengun-tetrahedral}, which inspect the growth of torsion, all suggest that 
if $M_0$ is non-arithmetic, then for a sequence $(M_i \rightarrow M_0)_{i \in \N}$ of finite covers of $M_0$ which is BS-converging to $\H^3$, the sequence
$$\frac{ \log \# H_1(M_i, \Z)_{\tors} }{V_i}$$ 
does {\em not} necessarily converge to $1 / (6 \pi)$ (the convergence is broken at covers with positive Betti numbers). 
If we believe that analytic torsion converges in this general setting then it must be that the regulator does not disappear in the limit, giving support 
to the above speculation.


\bibliography{bibli}

\providecommand{\bysame}{\leavevmode\hbox to3em{\hrulefill}\thinspace}
\providecommand{\MR}{\relax\ifhmode\unskip\space\fi MR }
\providecommand{\MRhref}[2]{%
  \href{http://www.ams.org/mathscinet-getitem?mr=#1}{#2}
}
\providecommand{\href}[2]{#2}
\begin{thebibliography}{10}

\bibitem{ABBGNRS}
M.~{Abert}, N.~{Bergeron}, I.~{Biringer}, T.~{Gelander}, N.~{Nikolov},
  J.~{Raimbault}, and I.~{Samet}, \emph{{On the growth of $L^2$-invariants for
  sequences of lattices in {L}ie groups}}, ArXiv e-prints (2012).

\bibitem{Aizenbud}
A.~Aizenbud, D.~Gourevitch, S.~Rallis, and G.~Schiffmann, \emph{Multiplicity
  one theorems}, Ann. of Math. (2) \textbf{172} (2010), no.~2, 1407--1434.
  \MR{2680495 (2011g:22024)}

\bibitem{AP06}
U.~K. Anandavardhanan and D.~Prasad, \emph{On the {${\rm SL}(2)$} period
  integral}, Amer. J. Math. \textbf{128} (2006), no.~6, 1429--1453. \MR{2275907
  (2008b:22014)}

\bibitem{PV}
\bysame, \emph{A local-global question in automorphic forms}, Compos. Math.
  \textbf{149} (2013), no.~6, 959--995. \MR{3077658}

\bibitem{BO}
Y.~Benoist and H.~Oh, \emph{Effective equidistribution of {$S$}-integral points
  on symmetric varieties}, Ann. Inst. Fourier (2010), no.~62, 1889--1942.

\bibitem{BLLS}
N.~{Bergeron}, P.~{Linnell}, W.~{L{\"u}ck}, and R.~{Sauer}, \emph{{On the
  growth of {B}etti numbers in $p$-adic analytic towers}}, ArXiv e-prints
  (2012), to appear in GGD.

\bibitem{BV}
N.~Bergeron and A.~Venkatesh, \emph{The asymptotic growth of torsion homology
  for arithmetic groups}, J. Inst. Math. Jussieu \textbf{12} (2013), no.~2,
  391--447. \MR{3028790}

\bibitem{BSD}
B.~J. Birch and H.~P.~F. Swinnerton-Dyer, \emph{Notes on elliptic curves.
  {II}}, J. Reine Angew. Math. \textbf{218} (1965), 79--108. \MR{0179168 (31
  \#3419)}

\bibitem{Bley}
W.~Bley, \emph{Numerical evidence for the equivariant {B}irch and
  {S}winnerton-{D}yer conjecture}, Exp. Math. \textbf{20} (2011), no.~4,
  426--456. \MR{2859900}

\bibitem{BW}
A.~Borel and N.~Wallach, \emph{Continuous cohomology, discrete subgroups, and
  representations of reductive groups}, second ed., Mathematical Surveys and
  Monographs, vol.~67, American Mathematical Society, Providence, RI, 2000.
  \MR{1721403 (2000j:22015)}

\bibitem{Neronmodels}
L{\"u}tkebohmert~W. Bosch, S. and M.~Raynaud, \emph{N\'eron models}, Ergebnisse
  der Mathematik und ihrer Grenzgebiete (3) [Results in Mathematics and Related
  Areas (3)], vol.~21, Springer-Verlag, Berlin, 1990. \MR{1045822 (91i:14034)}

\bibitem{Dunfield}
J.~{Brock} and N.~{Dunfield}, \emph{{Injectivity radii of hyperbolic integer
  homology 3-spheres}}, ArXiv e-prints (2013).

\bibitem{Brumley}
F.~Brumley, \emph{Effective multiplicity one on {${\rm GL}_N$} and narrow
  zero-free regions for {R}ankin-{S}elberg {$L$}-functions}, Amer. J. Math.
  \textbf{128} (2006), no.~6, 1455--1474. \MR{2275908 (2007h:11062)}

\bibitem{Bump}
D.l Bump, \emph{Automorphic forms and representations}, Cambridge Studies in
  Advanced Mathematics, vol.~55, Cambridge University Press, Cambridge, 1997.
  \MR{1431508 (97k:11080)}

\bibitem{Bushnell}
C.~J. Bushnell, \emph{Hereditary orders, {G}auss sums and supercuspidal
  representations of {${\rm GL}_N$}}, J. Reine Angew. Math. \textbf{375/376}
  (1987), 184--210. \MR{882297 (88e:22024)}

\bibitem{CE}
F.~Calegari and M.~Emerton, \emph{Bounds for multiplicities of unitary
  representations of cohomological type in spaces of cusp forms}, Ann. of Math.
  (2) \textbf{170} (2009), no.~3, 1437--1446. \MR{2600878 (2011c:22032)}

\bibitem{CV}
F.~{Calegari} and A.~{Venkatesh}, \emph{{A torsion {J}acquet--{L}anglands
  correspondence}}, ArXiv e-prints (2012).

\bibitem{Casselman}
B.~Casselman, \emph{The asymptotic behavior of matrix coefficients}, fragment
  of the notes {\em Introduction to admissible representations of $p$-adic
  groups} available on his web page.

\bibitem{Cheeger}
J.~Cheeger, \emph{Analytic torsion and the heat equation}, Ann. of Math. (2)
  \textbf{109} (1979), no.~2, 259--322.

\bibitem{Edixhoven}
B.~Edixhoven, \emph{N\'eron models and tame ramification}, Compositio Math.
  \textbf{81} (1992), no.~3, 291--306. \MR{1149171 (93a:14041)}

\bibitem{FLM}
T.~Finis, E.~Lapid, and W.~M{\"u}ller, \emph{On the degrees of matrix
  coefficients of intertwining operators}, Pacific J. Math. \textbf{260}
  (2012), no.~2, 433--456. \MR{3001800}

\bibitem{ETNC}
M.~Flach, \emph{The equivariant {T}amagawa number conjecture: a survey},
  Stark's conjectures: recent work and new directions, Contemp. Math., vol.
  358, Amer. Math. Soc., Providence, RI, 2004, With an appendix by C. Greither,
  pp.~79--125. \MR{2088713 (2005k:11132)}

\bibitem{Flicker}
Y.~Z. Flicker, \emph{Twisted tensors and {E}uler products}, Bull. Soc. Math.
  France \textbf{116} (1988), no.~3, 295--313. \MR{984899 (89m:11049)}

\bibitem{Flicker1}
\bysame, \emph{On distinguished representations}, J. Reine Angew. Math.
  \textbf{418} (1991), 139--172. \MR{1111204 (92i:22019)}

\bibitem{Gabai}
D.~Gabai, \emph{Foliations and the topology of {$3$}-manifolds}, J.
  Differential Geom. \textbf{18} (1983), no.~3, 445--503. \MR{723813
  (86a:57009)}

\bibitem{Goldfeld}
D.~Goldfeld, \emph{Modular forms, elliptic curves and the {$ABC$}-conjecture},
  A panorama of number theory or the view from {B}aker's garden ({Z}\"urich,
  1999), Cambridge Univ. Press, Cambridge, 2002, pp.~128--147. \MR{1975449
  (2004f:11048)}

\bibitem{Gross}
B.~H. Gross, \emph{On the conjecture of {B}irch and {S}winnerton-{D}yer for
  elliptic curves with complex multiplication}, Number theory related to
  {F}ermat's last theorem ({C}ambridge, {M}ass., 1981), Progr. Math., vol.~26,
  Birkh\"auser Boston, Mass., 1982, pp.~219--236. \MR{685298 (84e:14020)}

\bibitem{HLR}
G.~Harder, R.~P. Langlands, and M.~Rapoport, \emph{Algebraische {Z}yklen auf
  {H}ilbert-{B}lumenthal-{F}l\"achen}, J. Reine Angew. Math. \textbf{366}
  (1986), 53--120. \MR{833013 (87k:11066)}

\bibitem{HS}
M.~Hindry and J.~H. Silverman, \emph{Diophantine geometry}, Graduate Texts in
  Mathematics, vol. 201, Springer-Verlag, New York, 2000, An introduction.
  \MR{1745599 (2001e:11058)}

\bibitem{HL}
J.~Hoffstein and P.~Lockhart, \emph{Coefficients of {M}aass forms and the
  {S}iegel zero}, Ann. of Math. (2) \textbf{140} (1994), no.~1, 161--181, With
  an appendix by Dorian Goldfeld, Hoffstein and Daniel Lieman. \MR{1289494
  (95m:11048)}

\bibitem{GUE}
G.~Steil J.~Bolte and F.~Steiner, \emph{Arithmetical chaos and violation of
  universality in energy level statistics}, Physical Review Letters (1992),
  no.~69, 2188--2191.

\bibitem{JPS}
H.~Jacquet, I.~I. Piatetski-Shapiro, and J.~Shalika, \emph{Conducteur des
  repr\'esentations du groupe lin\'eaire}, Math. Ann. \textbf{256} (1981),
  no.~2, 199--214. \MR{620708 (83c:22025)}

\bibitem{JPS2}
H.~Jacquet and J.~A. Shalika, \emph{On {E}uler products and the classification
  of automorphic forms. {II}}, Amer. J. Math. \textbf{103} (1981), no.~4,
  777--815. \MR{623137 (82m:10050b)}

\bibitem{jordan-livne}
B.~W. Jordan and R.~Livn\'e, \emph{Integral {H}odge theory and congruences
  between modular forms}, Duke Math. J. \textbf{80} (1995), no.~2, 419--484,
  {\'e}.

\bibitem{KT}
S-I Kato and K.~Takano, \emph{Subrepresentation theorem for {$p$}-adic
  symmetric spaces}, Int. Math. Res. Not. IMRN (2008), no.~11, Art. ID rnn028,
  40. \MR{2428854 (2009i:22021)}

\bibitem{Lagier}
N.~Lagier, \emph{Terme constant de fonctions sur un espace sym\'etrique
  r\'eductif {$p$}-adique}, J. Funct. Anal. \textbf{254} (2008), no.~4,
  1088--1145. \MR{2381204 (2009d:22013)}

\bibitem{RL}
J.~Lansky and A.~Raghuram, \emph{On the correspondence of representations
  between {${\rm GL}(n)$} and division algebras}, Proc. Amer. Math. Soc.
  \textbf{131} (2003), no.~5, 1641--1648. \MR{1950297 (2003m:22021)}

\bibitem{Luck}
W.~L\"uck, personal communication.

\bibitem{Marshall}
S.~Marshall, \emph{Bounds for the multiplicities of cohomological automorphic
  forms on {${\rm GL}_2$}}, Ann. of Math. (2) \textbf{175} (2012), no.~3,
  1629--1651. \MR{2912713}

\bibitem{MW}
D.~Masser and G.~W{\"u}stholz, \emph{Isogeny estimates for abelian varieties,
  and finiteness theorems}, Ann. of Math. (2) \textbf{137} (1993), no.~3,
  459--472. \MR{1217345 (95d:11074)}

\bibitem{Millson}
J.~J. Millson, \emph{On the first {B}etti number of a constant negatively
  curved manifold}, Ann. of Math. (2) \textbf{104} (1976), no.~2, 235--247.
  \MR{0422501 (54 \#10488)}

\bibitem{Moreno}
C.~J. Moreno, \emph{The strong multiplicity one theorem for {${\rm GL}_{n}$}},
  Bull. Amer. Math. Soc. (N.S.) \textbf{11} (1984), no.~1, 180--182. \MR{741735
  (85j:11058)}

\bibitem{Mueller}
W.~M\"uller, \emph{Analytic torsion and ${R}$-torsion of {R}iemannian
  manifolds}, Adv. in Math. \textbf{28} (1978), no.~3, 233--305.

\bibitem{DP}
D.~Prasad, \emph{Invariant forms for representations of {${\rm GL}_2$} over a
  local field}, Amer. J. Math. \textbf{114} (1992), no.~6, 1317--1363.
  \MR{1198305 (93m:22011)}

\bibitem{rahm-sengun}
A.~Rahm and M.H. {\c{S}}eng{\"u}n, \emph{On level one cuspidal {B}ianchi
  modular forms}, LMS J. Comput. Math. \textbf{16} (2013), 187--199.

\bibitem{Rattcliffe}
J.~G. Ratcliffe, \emph{Foundations of hyperbolic manifolds}, second ed.,
  Graduate Texts in Mathematics, vol. 149, Springer, New York, 2006.
  \MR{2249478 (2007d:57029)}

\bibitem{Rudnick}
Z.~Rudnick, \emph{A central limit theorem for the spectrum of the modular
  domain}, Ann. Henri Poincar\'e \textbf{6} (2005), no.~5, 863--883.
  \MR{2219860 (2007b:11074)}

\bibitem{Sakellaridis}
Y.~Sakellaridis, \emph{On the unramified spectrum of spherical varieties over
  {$p$}-adic fields}, Compos. Math. \textbf{144} (2008), no.~4, 978--1016.
  \MR{2441254 (2010d:22026)}

\bibitem{SV}
Y.~Sakellaridis and A.~Venkatesh, \emph{Periods and harmonic analysis on
  spherical varieties}, preprint {\tt http://arxiv.org/abs/1203.0039},.

\bibitem{Sarnak}
P.~Sarnak, \emph{Letter to{ Z}.{R}udnick on multiplicities of eigenvalues for
  the modular surface}, available on
  http://publications.ias.edu/sarnak/paper/500.

\bibitem{SengunExpMath}
M.~H. {\c{S}}eng{\"u}n, \emph{On the integral cohomology of {B}ianchi groups},
  Exp. Math. \textbf{20} (2011), no.~4, 487--505. \MR{2859903}

\bibitem{sengun-tetrahedral}
\bysame, \emph{On the torsion homology of non-arithmetic hyperbolic tetrahedral
  groups}, Int. J. Number Theory \textbf{8} (2012), no.~2, 311--320.

\bibitem{sengun-tsaknias}
M.~H. {\c{S}}eng{\"u}n and P.~Tsaknias, \emph{Even {G}alois representations and
  torsion homology of {B}ianchi groups}, in progress.

\bibitem{S04}
J.~H. Silverman, \emph{A lower bound for the canonical height on elliptic
  curves over abelian extensions}, J. Number Theory \textbf{104} (2004), no.~2,
  353--372. \MR{2029512 (2004k:11106)}

\bibitem{Ullom}
S.~Ullom, \emph{Integral normal bases in {G}alois extensions of local fields},
  Nagoya Math. J. \textbf{39} (1970), 141--148. \MR{0263790 (41 \#8390)}

\bibitem{SparseEqui}
A.~Venkatesh, \emph{Sparse equidistribution problems, period bounds and
  subconvexity}, Ann. of Math. (2) \textbf{172} (2010), no.~2, 989--1094.
  \MR{2680486 (2012k:11061)}

\end{thebibliography}

\bibliographystyle{amsplain}

\end{document}